\newcommand*{\eqrefplain}[1]{\textup{\tagform@{\ref*{#1}}}}
\newcommand{\ind}[1]{\mathbbm{1}\left\{#1\right\}}
\newcommand{\rdd}{\mathbb{R}^{d}}
\newcommand{\re}{\mathbb{R}}
\newcommand{\Prr}[1]{\Pr\left(#1\right)}
\newcommand{\limn}{\lim_{n\rightarrow\infty}}
\newcommand{\sam}[2]{\mathbb{#1}_{#2}}
\newcommand{\norm}[1]{\left\lVert#1\right\rVert}
\newcommand{\normm}[1]{\lVert#1\rVert}
\newcommand{\eqd}{\stackrel{\text{d}}{=}}
\newcommand{\sumn}{\sum_{i=1}^n}
\newcommand{\D}{{\rm D}}
\DeclareMathOperator{\HD}{HD}
\DeclareMathOperator{\IRW}{IRW}
\DeclareMathOperator{\IDD}{IDD}
\DeclareMathOperator{\SMD}{SMD}
\DeclareMathOperator{\SD}{SD}
\DeclareMathOperator{\MSD}{MSD}
\DeclareMathOperator{\sff}{SR}
\newcommand{\Ee}[2]{\mathbb{E}{#1}\left(#2\right)}
\newcommand{\E}[2]{\mathbb{E}_{#1}#2}
\DeclareMathOperator*{\argmax}{ {\rm argmax}}
\DeclareMathOperator*{\lap}{ {\rm Laplace}}
\DeclareMathOperator{\med}{MED}
\DeclareMathOperator{\mad}{MAD}
\newcommand{\expt}{\sigma}
\newcommand{\GS}{\mathrm{GS}}
\newcommand{\kr}[1]{{\color{black} #1 \color{black}}}
\newcommand{\eps}{\epsilon}
\newcommand{\cM}{\mathcal{M}}
\newcommand{\bR}{\mathbb{R}}
\newcommand{\cN}{\mathcal{N}}
\newcommand{\cF}{\sF}
\newcommand{\scA}{\mathscr{A}}
\newcommand{\sC}{\mathscr{C}}
\newcommand{\sN}{\mathscr{N}}
\newcommand{\sF}{\mathscr{F}}
\newcommand{\sG}{\mathscr{G}}
\newcommand{\sB}{\mathscr{B}}
\newcommand{\VC}{\operatorname{VC}}
\newcommand{\bX}{{\mathbb{X}}}
\newcommand{\bS}{\mathbb{S}}
\newcommand{\cube}{A}
\newcommand{\hmu}{{\hat{\mu}}}
\numberwithin{equation}{section}
\theoremstyle{remark}
\newtheorem{condition}{Condition}
\begin{document}

\title{Differentially private multivariate medians}

\author{\name Kelly Ramsay \email kramsay2@yorku.ca \\
       \addr Department of Mathematics and Statistics\\
       York University\\
       North York, ON M3J1P3, Canada
       \AND
       \name Aukosh Jagannath \email a.jagannath@uwaterloo.ca \\
       \addr Department of Statistics and Actuarial Science\\
       University of Waterloo\\
       Waterloo, ON N2L3G1, Canada
        \AND
       \name Shoja'eddin Chenouri \email schenouri@uwaterloo.ca \\
       \addr Department of Statistics and Actuarial Science\\
       University of Waterloo\\
       Waterloo, ON N2L3G1, Canada}


\maketitle

\begin{abstract}
Statistical tools which satisfy rigorous privacy guarantees are necessary for modern data analysis. 
It is well-known that robustness against contamination is linked to differential privacy. 
Despite this fact, using multivariate medians for differentially private and robust multivariate location estimation has not been systematically studied.  
We develop novel finite-sample performance guarantees for differentially private multivariate depth-based medians, which are essentially sharp. 
Our results cover commonly used depth functions, such as the halfspace (or Tukey) depth, spatial depth, and the integrated dual depth. 
We show that under Cauchy marginals, the cost of heavy-tailed location estimation outweighs the cost of privacy. 
We demonstrate our results numerically using a Gaussian contamination model in dimensions up to $d = 100$, and compare them to a state-of-the-art private mean estimation algorithm. 
As a by-product of our investigation, we prove concentration inequalities for the output of the exponential mechanism about the maximizer of the population objective function. 
This bound applies to objective functions that satisfy a mild regularity condition.
\end{abstract}

\begin{keywords}
differential privacy, location estimation, robust statistics, depth function, sample complexity
\end{keywords}

\section{Introduction}
Protecting user privacy is necessary for a safe and fair society. 
In order to protect user privacy, many large institutions, including the United States Census Bureau \citep{Abowd20222020}, Apple \citep{Apple}, and Google \citep{Google}, utilize a strong notion of privacy known as differential privacy. 
Differential privacy is favored because it protects against many adversarial attacks while requiring minimal assumptions on both the data and the adversary \citep{Dwork2017}. 
It is now well-known that differential privacy is linked to robustness against contamination, see \citep{Dwork2009, Medina2020, 2021Liub} and the references therein. 
Surprisingly, however, multivariate medians have not been systematically studied in the differential privacy literature. 
We study this problem here. 
Note that there are several variants of differential privacy. 
Here, we focus on the setting of pure differential privacy, which is the strictest variant of differential privacy.

To date, the literature on differentially private location estimation has largely focused on means. 
\citet{Barber2014} proved a lower bound on the minimax risk of differentially private mean estimation in terms of the number of moments possessed by the population measure. 
Other early works focused on the univariate setting  \citep{Karwa2017, Bun2019}. 
\citet{Kamath2018} and \citet{Bun2019b} studied differentially private mean estimation for several parametric models, including the multivariate Gaussian model. 
Later, several authors studied differentially private mean estimation for sub-Gaussian measures \citep{Cai2019,Biswas2020,Brown2021,2021Liu}. 
After which, \citet{Kamath2020,2021Liub} and \citet{Hopkins2021} studied differentially private mean estimation for measures with a finite second moment. 
Within this framework, \citet{Kamath2020} considered heavy tails and \citet{2021Liub} and \citet{Hopkins2021} considered robustness in terms of contamination models. 
\citet{Kamath2020} quantified the sample complexity of their (purely) differentially private mean estimator in terms of the number of moments possessed by the population measure. 
\citet{2021Liub} quantified the relationship between the accuracy of an (approximately) differentially private mean estimator and the level of contamination in the observed sample. 
On a similar note, \citet{Hopkins2021} showed that there exists a level of contamination under which the sample complexity of their (purely) differentially private mean estimator remains unchanged.

On the other hand, if one wishes to estimate location robustly, the canonical estimators are medians. 
The differential privacy literature on medians has focused on the univariate setting:
\citet{Dwork2009} introduced an approximately differentially private median estimator with asymptotic consistency guarantees, 
\citet{Avella-Medina2019a, Brunel2020} then introduced several median estimators which achieve sub-Gaussian error rates, and 
\citet{zamos2020} introduced median estimators with minimax optimal sample complexity. 
Private estimation of multivariate medians, however, has not been carefully studied. 

In this paper, we study differentially private, multivariate median estimation through the framework of depth functions. 
Depth-based median estimation is the standard approach to multivariate median estimation. 
Here, one has a function, called the depth function, which provides a measure of centrality. 
Its maximizer is then the median, which is generally robust. Popular depth-based medians include the halfspace (or Tukey) median \citep{Tukey1974}, the spatial median \citep{Vardi2000}, and the simplicial median \citep{liu1990}. For more on the motivation of the depth-based medians approach, see Section~\ref{sec::rob-med-est} below and \cite{Small1990,Vardi2000, Serfling2006}. Our results apply to a broad class of depth functions, including those listed above, and we make minimal distributional assumptions. In particular, we do not require concavity of the depth function or that the population measure has moments of any order. 

Before turning to our main results, it is important to note that several authors have recently used depth functions in the context of differential privacy, likely due to their robustness properties, with a heavy focus on the halfspace depth. 
Depth functions have been used in settings such as finding a point in a convex hull \citep{Beimel2019,Gao2020}, comparing $k$-norm mechanisms \citep{Awan2021}, and improving robustness for mean estimation for Gaussian models \citep{Brown2021,2021Liub}. 
Another related work is that of \citet{beneliezer2022archimedes}, who focus on various problems concerning high-dimensional quantile estimation. 
In particular, one problem \citet{beneliezer2022archimedes} consider is that of privately obtaining one or more points which have halfspace depth above a given threshold. 
Though this problem is closely related, it is distinct. 
For instance, their algorithm can be used to estimate a point with high halfspace depth. 
However, it could not be used to directly estimate the halfspace median, since that would require setting the threshold to be close to the depth of the empirical halfspace median, which depends on the sample. 
To our knowledge, none of the aforementioned works have directly addressed depth-based median estimation. After completion of this work, \citet{ramsay2024} consider approximately differentially private projection-depth-based medians. Though many popular depth-based medians are covered by our framework, projection-depth-based medians are not covered by it. As a result, their work can be thought of as complementary to this one. 

\subsection{Our contributions}
\kr{Our contributions are as follows:
\begin{itemize}
    \item Our main result is a general finite-sample deviations bound for private multivariate medians based on the exponential mechanism (Theorem~\ref{thm::main-dev-bound}). This deviations bound applies to private versions of many popular medians arising from depth functions, such as the halfspace median \citep{Tukey1974}, the simplicial median \citep{Liu1988} and the spatial median \citep{Vardi2000,Serfling2002}. 
    \item We use Theorem~\ref{thm::main-dev-bound} to give upper bounds on the deviations (and sample complexities, see Corollary~\ref{cor::main-sc-cor}) of several, purely differentially private multivariate medians arising from depth functions. These bounds are essentially sharp given recent results of \citet{Kamath2018,Cai2019} and do not require any moment assumptions on the population measure. 
    \item We provide a fast implementation of a smoothed version of the integrated dual depth-based median \citep{Cuevas2009}; we can compute the (non-private) median of ten thousand 100-dimensional samples in less than one second on a personal computer.  We show that this smoothed version of the integrated dual depth satisfies desirable properties for a depth function and can be approximated in polynomial time with finite-sample performance guarantees, see Proposition \ref{prop::app_dep}. 
    \item As a by-product of our analysis, we uncover a general but elementary concentration bound (Theorem~\ref{thm::main_result}) for the exponential mechanism. We give a novel regularity condition ``$(K,\sF)$-regularity'' (Condition \ref{cond::phi-k-reg}) on the objective function. This regularity condition implies both an upper bound on the sample complexity of a draw from the exponential mechanism and an upper bound on the global sensitivity of the objective function. 
    \item As a second by-product of our analysis, we uncover uniform non-asymptotic error bounds for several depth functions and a differentially private estimator of data depth values.
\end{itemize}}

\section{Main results}\label{sec::rob-med-est}

\kr{In this section, we provide a general finite-sample deviation bound for differentially private multivariate median estimation. 
As the tools we use here combine ideas from two distinct literatures, namely the differential privacy literature and the robust statistics literature, we briefly recall some essential notions from each of these fields before stating our main results. For a more in-depth introduction to differential privacy see \citet{Dwork2014}, and for a more in-depth introduction to robust multivariate median estimation and the depth function framework see \citet{Mosler2002,Small1990,Liu_Serfling_Souvaine_2006}.  
We define all mathematical objects throughout the paper as they are used, but it is also helpful to have them collected in one place. Therefore, we also provide a summary of the notation we will use throughout the paper in Appendix~\ref{app::notation}. }

\subsection{Differential privacy}
Let us start first by recalling some essential notions from differential privacy.
A \emph{dataset} of size $n\times d$ is a collection of $n$ points in $\rdd$, $\mathbb{X}_n=(X_\ell)_{\ell=1}^n$, with repetitions allowed.
Let $\mathbf{D}_{n\times d}$ be the set of all datasets of size $n\times d$. 
For a dataset $\mathbb{Y}_n$, let
$\mathcal{D}(\mathbb{Y}_n,m)=\left \{ \mathbb{Z}_n\in \mathbf{D}_{n\times d} \colon  |\mathbb{Z}_n \triangle \mathbb{Y}_n|=2m \right\},$
that is, $\mathcal{D}(\mathbb{Y}_n,m)$ is the collection of datasets of size $n\times d$ which differ from $\mathbb{Y}_n$ in exactly $m$ points. 
Two datasets $\mathbb{Y}_n$ and $\mathbb{Z}_n$ are said to be \emph{adjacent} if $\mathbb{Z}_n\in \mathcal{D}(\mathbb{Y}_n,1)$. 
Finally, for a dataset $\mathbb{Y}_n$ with empirical measure $\hmu_{\mathbb{Y}_n}$ define
$$\widetilde{\cM}(\mathbb{Y}_n)=\{\tilde{\mu}\in\cM_1(\rdd)\colon \tilde{\mu}=\hmu_{\mathbb{Z}_n} \text{ for some } \mathbb{Z}_n\in \mathcal{D}(\mathbb{Y}_n,1) \},$$
where, for a space $S$, $\cM_1(S)$ denotes the space of probability measures on $S$.

Let us now recall the notion of differential privacy. 
Suppose that we observe a dataset $\bX_n$ comprised of $n$ i.i.d.\ samples from some unknown measure $\mu\in\cM_1(\bR^d)$ and produce a statistic, $\tilde{\theta}_n$, whose law conditionally on the samples depends on their empirical measure, $\hat{\mu}_n$.  
Let us denote this law by $P_{\hat{\mu}_n}$. The goal of differential privacy is to produce a statistic such that $P_{\hmu_n}$ is non-degenerate and, more precisely, obeys a certain ``privacy guarantee'' which is defined as follows.
Following the privacy literature, we call the algorithm that takes $\hmu_n$ and returns  the (random) statistic, $\tilde{\theta}_n$, a \emph{mechanism}. 
We also follow the standard abuse of terminology and call $\tilde{\theta}_n$ the mechanism when it is clear from context.

\begin{definition}\label{def::dp}
A mechanism $\tilde{\theta}_n$ is $\epsilon$-differentially private if for any dataset $\mathbb{Y}_n$, any $\tilde{\mu}\in \widetilde{\cM}(\mathbb{Y}_n)$, and any measurable set $B$, it holds that
\begin{equation}    \label{eqn::dp}
    P_{\hat{\mu}_{\mathbb{Y}_n}}( B)\leq e^\epsilon P_{\tilde{\mu}}( B).
\end{equation} 
\end{definition}
\noindent Here, $\eps>0$ is the \emph{privacy parameter}, for which smaller values enforce stricter levels of privacy. 
Many general purpose differentially private mechanisms rely on the concept of \emph{global sensitivity}. 
The global sensitivity $\GS_n$ of a function $\phi:\rdd\times\cM_1(\rdd)\to\re$ is given by
\[
\GS_n(\phi)=\sup_{\substack{\sam{X}{n}\in \mathbf{D}_{n\times d},\\ \tilde{\mu}_n\in \widetilde{\cM}(\sam{X}{n})}}\sup_{x\in\rdd}|\phi(x,\hat{\mu}_n)-\phi(x,{\tilde{\mu}_n})|.
\]

The simplest differentially private mechanisms are the additive noise mechanisms, e.g., the Laplace and Gaussian mechanisms. 
These mechanisms require the non-private estimator to have finite global sensitivity. 
Unfortunately, virtually all standard location estimators, including both univariate and multivariate medians, have infinite global sensitivity, see e.g., \citep{avellamedina2019differentially}. 

However, \emph{depth functions}, the objective functions for multivariate medians which we will discuss shortly, all have finite global sensitivity. 
This fact makes them a good candidate for use with the \emph{exponential mechanism}, which is a general mechanism used to produce differentially private estimators from non-private estimators that are maximizers of a given objective function. 
For a given $\beta>0$, base measure (i.e., prior) $\pi\in\cM_1(\rdd)$ and objective function $\phi:\rdd\times\cM_1(\rdd)\to\re$, suppose that $\int \exp(\beta \phi(\theta,\nu))d\pi(\theta)<\infty$ for all $\nu\in\cM_1(\rdd)$, then the exponential mechanism with base measure $\pi$ is given by
\begin{equation}\label{eqn::em-dens}
Q_{\hmu_n,\beta} \propto \exp(\beta\phi(\theta,\hmu_n))d\pi.
\end{equation}
A sample from the exponential mechanism $\tilde{\theta}_n\sim Q_{\hmu_n,\beta}$ then provides an estimate of the population value $\theta_0=\argmax_{\rdd}\phi(\theta,\mu)$. 
In regard to the choice of $\beta$, it was shown by \citet{McSherry2007} that if $\tilde{\theta}_n$ is produced by the exponential mechanism, with $\beta\leq  \eps/2\GS_n(\phi)$ then $\tilde{\theta}_n$ is $\eps$-differentially private. 

\subsection{The depth function framework}
When first studying multivariate median estimation, one might try to naively extend the notion of a univariate median to the multivariate setting, i.e., the coordinate-wise median. This, however, is well-known to be an unsatisfactory as a measure of center for many reasons \citep{Small1990,Serfling2006,Liu_Serfling_Souvaine_2006}. For example, the coordinate-wise median can reside outside the convex hull of the data \citep{Serfling2006}. The general framework of \emph{depth functions}, or statistical depth functions, was introduced to resolve these issues and is now the standard approach to defining and studying multivariate medians. See \citep{Small1990, Zuo2000, Mosler2002,Serfling2006,Liu_Serfling_Souvaine_2006} for a necessarily small introduction to this broad research field. 

Let us now recall the basic notions of (statistical) depth functions. 
Roughly speaking, depth functions are functions of the form $\D\colon \rdd\times \cM_1(\rdd) \rightarrow \re^+$ which, given a probability measure and a point in the domain, assign a number. 
This number represents how central that point is with respect to the measure; it is called the depth of that point. 
Given a depth function, the corresponding median is then defined to be a maximizer of this depth, that is, the ``deepest'' point:
\[\med(\mu; \D)=\argmax_{x\in\rdd} \D(x,\mu).\]
Depth-based medians are generally robust, in the sense that they are not affected by outliers. 
For instance, depth-based medians have a high breakdown point and favorable properties related to the influence function \citep{Chen2002, Zuo2004}. 
\kr{Let us now be more precise. For two random vectors $X,Y$, we write $X\eqd Y$ when $X$ is equal in distribution to $Y$. Recall that a measure $\mu\in\cM_1(\rdd)$ is centrally symmetric about a point $x\in\rdd$ if $X-x\eqd x-X$ for $X\sim \mu$. 
In the following, let $\sC_x\subset\cM_1(\rdd)$ denote the set of centrally symmetric measures about $x$ and for $A\in \re^{d\times d}$ and $b\in\rdd$, let $A\mu+b$ be the law of $AX+b$ if $X\sim \mu$. 

\begin{definition}\label{def::depth}
A function $\D\colon \rdd\times \cM_1(\rdd)\rightarrow\re^+ $ is a \emph{depth function} with admissible set $\scA\subseteq \cM_1(\rdd)$ if, for all $\mu\in \scA$, the following four properties hold:
\begin{enumerate}
    \item Similarity invariance: For all orthogonal matrices $A\in \re^{d\times d}$ and $b\in\rdd$ it holds that $\D (x,\mu)=\D (Ax+b,A\mu+b)$. 
    \item Maximality at center: If $\mu\in \sC_{\theta_0}$ then $\D(\theta_0,\mu)=\sup_x\D (x,\mu)$.
    \item Decreasing along rays: Suppose $\D$ is maximized at $\theta_0$. For all $p\in (0,1)$, it holds that $\D (x,\mu)\leq \D(p\theta_0+(1-p)x,\mu)\leq \D (\theta_0,\mu).$
    \item Vanishing at infinity: $\lim_{c\rightarrow\infty}\D (cu,\mu)=0$ for any unit vector $u$. 
\end{enumerate}
\end{definition}
\kr{\noindent The first property says that the depth function has limited dependence on the chosen coordinate system. The remaining properties ensure that $\D$ measures depth. The admissible set $\scA$ is the set of measures or distributions for which the function measures depth.}
\begin{remark}\label{rem::depth_def}
There are a few distinct definitions of the notion of a depth function in the statistics literature. See, e.g., \citep{Zuo2000, Liu_Serfling_Souvaine_2006,RAMSAY201951} for some alternatives. 
We choose Definition~\ref{def::depth} as it includes as many existing depth functions as possible while retaining the spirit of a depth function. 
We use here a weaker definition of a depth function than the influential work of \citet{Zuo2000}, which replaces similarity invariance with affine invariance. 
For a discussion of desirable properties for a depth function to possess, see \citep{Zuo2000, Liu_Serfling_Souvaine_2006, SerflingDepthFO}.
\end{remark}}

Examples of commonly used depth functions include the halfspace depth (or Tukey depth) \citep{Tukey1974}, the simplicial depth \citep{Liu1988}, the spatial depth \citep{Vardi2000,Serfling2002}, the integrated dual depth (IDD) \citep{Cuevas2009}, and the integrated-rank-weighted (IRW) depth \citep{RAMSAY201951}. 
\kr{For the reader's convenience, we provide a brief review of these depth functions and their basic properties in Section~\ref{sec::depth}. 
We emphasize that this is a small selection of the large number of depth functions used in the literature, and reviewing them all is outside of the scope of this work. 

There are a number of desirable properties of a depth function, relating to transformation invariance, robustness, computability, ability to measure depth for a broad range of distributions and more, from which no depth function arises as the ``gold standard'' \citep{Zuo2000,Liu_Serfling_Souvaine_2006}. 
Each depth function yields a distinct notion of a median, each with its own desirable properties. As our goal in this paper is to unify these two frameworks, we will not investigate the question of which depth is optimal, statistically or computationally, for a given problem. Instead, our main contribution is to give general finite-sample deviations bounds which applies to many of the existing depth functions, under mild conditions on the population measure. }


\subsection{A finite-sample deviations bound}
Our main result relies on a regularity condition, for which we need to introduce the following notation. 
Let $\sB$ denote the space of Borel functions from $\rdd$ to $[0,1]$. 
For a family of functions, $\sF\subseteq\sB$, define a pseudometric on $\cM_1(\rdd)$,  $d_\sF(\mu,\nu) = \sup_{g\in\sF}|\int gd(\mu-\nu)|$, where $\mu,\nu\in\cM_1(\rdd)$. 
Recall that many standard metrics on probability measures can be written in this fashion. 
For example, taking $\sF$ to be the set of indicator functions of Borel sets yields the Total Variation distance and taking $\sF$ to be the set of indicator functions of semi-infinite rectangles yields the Kolmogorov--Smirnov distance. 
We then define the following important regularity condition.

\begin{definition}\label{def::kf-reg}
We say that $\D$ is \emph{$(K,\sF)$-regular} if there exists a class of functions $\sF\subset \mathscr{B}$ such that $\D(x,\cdot)$ is $K$-Lipschitz with respect to the $\sF$-pseudometric uniformly in $x$, i.e., for all $\mu,\nu\in\cM_1(\rdd)$
\[
\sup_{x}|\D(x,\mu)-\D(x,\nu) | \leq K d_\sF(\mu,\nu).
\]
\end{definition}
\noindent \kr{Intuitively, $(K,\sF)$-regularity can be thought of as a robustness condition as it says that $(K,\sF)$-regular functions are affected by extreme observations in a very limited manner. For example, it immediately implies the bound $\GS_n(\D)\leq K/n$ on the global sensitivity through the boundedness of $g\in \sF$, see Lemma \ref*{lem::GS}.  
This condition is very convenient for two reasons: Firstly, it is elementary to check this condition for many popular depth functions (see, Table~\ref{table::cond}). Secondly, it immediately yields an $\epsilon-$differentially private estimator by choosing $\beta=n\eps/2K$, see Lemma~\ref*{lem::ub-gs-dp}. While this assumption is convenient for differential privacy, we show in a companion paper \citep{ramsay2025an} that our main deviations bound holds under a weaker condition and can apply to other statistical learning settings.} 


We now state the necessary regularity conditions on the pair $(\mu,\D)$ that are required for our main result. 
In the following, let $\VC(\sF)$ denote the Vapnik--Chervonenkis dimension of $\sF$. 
\begin{condition}\label{cond::phi_um}
The function $\D(\cdot,\mu)$ has a maximizer.
\end{condition}
\begin{condition}\label{cond::phi_lip} The map
$x\mapsto \D(x,\mu)$ is $L$-Lipschitz a.e.\ for some $L>0$.
\end{condition}
\begin{condition}\label{cond::phi-k-reg}
There is some $K>0$ and some family of functions $\sF\subseteq \sB$ with $\VC(\sF)<\infty$ such that
$\D$ is $(K,\sF)$-regular.
\end{condition}

To state our main result, we must also introduce the $\emph{discrepancy function}$. 
Let  $E_{\D,\mu}=\{\theta:\D(\theta,\mu)=\max\limits_{x\in\rdd} \D(x,\mu)\}$ denote the set of maximizers of $\D(\cdot,\mu)$ for fixed $\D$ and $\mu$. 
For a set $A$ and $r>0$, let $B_r(A)=\{x\colon \min\limits_{y\in A}\norm{x-y}\leq r\}$. 
The discrepancy function of the pair $(\D,\mu)$ is 
\begin{equation}
\alpha(t) = \D(\theta_0,\mu) - \sup_{x\in B^c_t(E_{\D,\mu})} \D(x,\mu),
\end{equation}
where $\theta_0\in E_{\D,\mu}.$
\kr{Given $t>0$, the discrepancy function is the minimum distance between the maximal population depth value, and the maximal population depth value that is at least $t$-far from the maximizing set $E_{\D,\mu}$. For example, often there is one maximizer, the median, and the discrepancy function measures the minimum difference between the depth of the median, and the depth of any point at least $t$-far from the median. 
The faster the discrepancy function grows in $t$, the faster the depth function decreases as we move away from the median.}
Note that $\alpha$ is a non-decreasing function because the optimization problem involved is over decreasing sets. 
In Table~\ref{table::alpha} below, we explicitly calculate $\alpha$ for several depth functions.

We are now ready to state our main result, which provides bounds on the finite-sample deviations of the proposed differentially private multivariate medians via depths. 
For concreteness, we present our results for two of the most popular choices of priors, namely Gaussian measures and the uniform measure on $\cube_{R}(y)$, the $d$-dimensional cube of side-length $R$ centered at $y$. 
In the following, let $d_{R,y}(x)$ denote the minimum distance from a point $x=(x_1,\,\dots,\,x_d)$ to a face of the cube $\cube_{R}(y)$: $d_{R,y}(x)=\min\limits_{1\leq i\leq d}|x_i-y_i\pm R/2|$.
Similarly, denote the minimum distance from a set $B\subset\rdd$ to a face of the cube $\cube_{R}(y)$ by $d_{R,y}(B)=\inf\limits_{x\in B}d_{R,y}(x)$ and let $d(x,B)=\inf\limits_{y\in B}\norm{x-y}$ denote the usual point-to-set distance, where $\normm{\cdot}$ denotes the Euclidean norm. 
Define $\alpha^{-1}(t)=\sup\{r\geq 0\colon \alpha(r)=t\}$, where one notes that $\alpha^{-1}$ exists because $\alpha$ is a monotone function. 
Lastly, we write $a\lesssim b$ if $a\leq C b$ for some universal constant 
$C>0$. 
\begin{theorem}\label{thm::main-dev-bound}
If Conditions \ref{cond::phi_um}--\ref{cond::phi-k-reg} hold, then the following holds:
\begin{enumerate}[label=(\roman*)]
\item Suppose that $L\geq 1$. If $\pi=\mathcal{N}(\theta_\pi, \sigma_\pi^2I)$, 
then there exists a universal constant $c>0$ such that for all $n\geq 8K/\epsilon$, all $d>2$ and all $0<\gamma<1$, with probability at least $1-\gamma$, we have that
\begin{multline}\label{devi_normal}
    d(E_{\D,\mu},\tilde\theta_n)\lesssim   \alpha^{-1}\Bigg(cK\Bigg[\sqrt{\frac{\log(1/\gamma)\vee \VC(\sF)\log n}{n}}\\
 \bigvee\ \frac{\log\left(1/\gamma\right)\vee\left(\frac{d(E_{\D,\mu},\theta_\pi)^2}{\sigma_\pi^2}+ d\log\left(\frac{\sigma_\pi Ln\epsilon}{K}\vee d\right)\right)}{n\epsilon}\Bigg]\Bigg) . 
\end{multline}
\item If instead $\pi\propto \ind{x\in \cube_{R}(\theta_\pi)}$ where $E_{\D,\mu}\subset\cube_{R}(\theta_\pi)$, then there exists a universal constant $c>0$ such that for all $n,d\geq 1$ and all $0<\gamma<1$, with probability at least $1-\gamma$, we have that
\begin{multline}\label{devi_cube}
    d(E_{\D,\mu},\tilde\theta_n)\lesssim   \alpha^{-1}\Bigg(cK\Bigg[\sqrt{\frac{\log(1/\gamma)\vee \VC(\sF)\log n}{n}}\\
 \bigvee\ \frac{\log\left(1/\gamma\right)\vee d\log\left(\frac{Rn}{nd_{R,\theta_\pi}(E_{\D,\mu})\wedge K/L}\right)}{n\epsilon}\Bigg]\Bigg) .
\end{multline}
\end{enumerate}
\end{theorem}
The proof of Theorem~\ref{thm::main-dev-bound} can be found in Section \ref{app::proofs}, and relies on an elementary concentration bound (Theorem~\ref{thm::main_result}), which we introduce in Section \ref{sec::concen--bound}. 
We also provide a version of Theorem~\ref{thm::main-dev-bound} in terms of sample complexity, see Corollary~\ref{cor::main-sc-cor}. 
\kr{Theorem~\ref{thm::main-dev-bound} is a deviations bounds for private multivariate medians. Put simply, it says that $\alpha(d(E_{\D,\mu},\tilde\theta_n))$ is bounded above, with high probability, by two major terms: the sampling error term, which appears first in the upper bound, and the cost of privacy term, which appears second in the upper bound. 

To further help interpret Theorem~\ref{thm::main-dev-bound}, note that we are mainly concerned about small deviations. For many choices of $\mu$ and $\D$, one can show that $\alpha^{-1}$ is at most linear for small $t>0$ under various regularity conditions that capture many popular depths, see Section~\ref{sec::linear} where we present such conditions. In this case one can remove the $\alpha^{-1}$ from the right-hand sides of \eqref{devi_normal}-\eqref{devi_cube}.
In addition, this fact coupled with the fact that many of the depth functions satisfy Condition \ref{cond::phi-k-reg} with $K=O(1)$ and $\VC(\sF)=O(d)$, and, in many contexts, the population median is unique, recovers a bound similar to those obtained in the case of private mean estimation \citep{Kamath2018, Kamath2020, Cai2019}. 
That is, the bound given in Theorem~\ref{thm::main-dev-bound} then reduces to the simpler bound
$$    \norm{\med(\mu; \D)-\tilde\theta_n}\lesssim   \sqrt{\frac{\log(1/\gamma)\vee d\log n}{n}}\\
 \bigvee\ \frac{\log\left(1/\gamma\right)\vee d\log\left(\frac{Rn}{nd_{R,\theta_\pi}(E_{\D,\mu})\wedge 1/L}\right)}{n\epsilon}.$$ 
Under such $\mu$ and $\D$, assuming a reasonable prior and ignoring logarithmic terms, the cost of privacy is proportional to $\sqrt{d/n}\eps$. For concrete examples of Theorem~\ref{thm::main-dev-bound} applied to specific depth functions and population measures $\nu$, see Examples \ref{exam::mvtg} and \ref{exam::cauchy}.}

While we state our result for general depth functions satisfying the above assumptions, our results apply to many of the most popular depths, such as halfspace depth and, more broadly, Type D depths in the sense of \citet{Zuo2000}, spatial depth, simplicial depth and both the integrated rank-weighted and integrated dual depths. 
Depth functions to which these results apply and the conditions under which they apply are discussed at length in Section~\ref{sec::depth}. 
The conditions for specific popular depths are summarized in Table~\ref{table::cond}.  

We note that our bound is essentially sharp, see Lemma \ref{lem::sharp}. 
Given that if $\mu$ is Gaussian, then the population mean equals the population median, the bound on the sample complexity implied by Theorem~\ref{thm::main-dev-bound} (see Corollary~\ref{cor::main-sc-cor}) matches the lower bound on the sample complexity for private Gaussian mean estimation given by \cite{Kamath2018}, Theorem 6.5 and \cite{Hopkins2021}, Theorem 7.2 up to logarithmic factors. 

Alternatively, \citet{Cai2019} give a minimax lower bound for approximately differentially private sub-Gaussian mean estimation. 
Unsurprisingly, given that it is designed for the approximately differentially private setting, this lower bound becomes $-\infty$ if we apply it to the setting of pure differential privacy. 
However, it is still interesting to compare Theorem~\ref{thm::main-dev-bound} to the bound of \citet{Cai2019} in some way. 
A trivial manipulation of the conclusion of Lemma~\ref{lem::sharp} shows that the bound given in Theorem~\ref{thm::main-dev-bound} matches the lower bound of \citet{Cai2019}, up to logarithmic terms, when the ``approximate differential privacy parameter'', i.e., $\delta$ is taken to be at least $n^{-k}$ for some $k>0$. 
Therefore, as stated above, the bound in Theorem~\ref{thm::main-dev-bound} is essentially sharp. 
Furthermore, at least for the setting of Gaussian mean estimation, relaxing to the setting of approximate differential privacy will not yield faster rates than those given by Theorem~\ref{thm::main-dev-bound}.\footnote{It should not be difficult to extend this conclusion to the more general case of symmetric sub-Gaussian median estimation.}

Recall that estimating a parameter from an unbounded parameter space, i.e., unbounded estimation, is non-trivial in the setting of differential privacy, e.g., see \cite{Avella-Medina2019a}. 
Theorem~\ref{thm::main-dev-bound} shows that choosing $\sigma_\pi$ relatively large allows one to perform unbounded estimation at the cost of an extra $\log d$ in the second term of \eqref{devi_normal}, or, equivalently, an  extra $\log d$ to the sample complexity. 
To elaborate, using a depth function in the exponential mechanism with a Gaussian prior constitutes unbounded estimation of the population median. 
Theorem~\ref{thm::main-dev-bound} then shows that  $d(E_{\D,\mu},\,\theta_\pi)/\sigma_\pi=O(\sqrt{d\log d})$ is sufficient for the deviations under a misspecified Gaussian prior to match the deviations under a correctly specified uniform prior, up to a logarithmic factor in $d$. 
Thus, we only require $d(E_{\D,\mu},\,\theta_\pi)$ to be polynomial in $d$, i.e., choosing $\sigma_\pi$ to be some large polynomial in $d$ will ensure that $\sigma_\pi\geq d(E_{\D,\mu},\,\theta_\pi)/\sqrt{d\log d}$.

\begin{remark}[Projection depth]
One popular notion of depth not covered by our analysis is the projection depth \citep{zuo2003}, see also \citep{Stahel1981,Donoho1982}. This is because a private median drawn from the exponential mechanism based on the empirical version of this depth function, is inconsistent $\mu$-a.s, see Lemma \ref*{lem::incon}. For a private version of the projection-depth-based median, see \citep{ramsay2024}. 
\end{remark}
\begin{remark}[Comparison to non-private bounds]
The first term in the maximum in Theorem~\ref{thm::main-dev-bound} is a deviations bound for non-private depth-based medians. 
If we assume that $\theta_0\in \cube_{R}(0)$ and choose the depth function to be halfspace depth (Definition~\ref{def::hs}), then this upper bound recovers the upper bound on the non-private estimation error of the halfspace median for the Gaussian measure, see Theorem~2.1 of \citet{chen2018robust}, see also Theorem~3 of \citet{Zhu2020}. 
\end{remark}

\begin{remark}[Relaxing the Lipschitz requirement]
The Lipschitz requirement for $\D$ can be relaxed to obtain more general deviation bounds, at the cost of complicating the presentation. Furthermore, all the depth functions we consider are Lipschitz functions a.e.\ under mild assumptions, see Theorem~\ref{thm::depth_cond}. 
It is also not necessary to require that $E_{\D,\mu}\subset\cube_{R}(\theta_\pi)$. 
It is sufficient to assume that $E_{\D,\mu}\cap \cube_{R}(\theta_\pi)\neq \emptyset$. In this case, one can replace $d_{R,\theta_\pi}(E_{\D,\mu})$ with $d_{R,\theta_\pi}(E_{\D,\mu}\cap \cube_{R}(\theta_\pi))$. 
\end{remark}
\begin{remark}[The univariate setting]
Theorem~\ref{thm::main_result} can be applied to obtain an optimal univariate private median estimator. 
\citet{zamos2020} gave a lower bound on the minimax sample complexity for estimating the one-dimensional median of a population measure whose density is bounded below by a positive constant. 
Applying Theorem~\ref{thm::main-dev-bound} when $d=1$ and $\D$ being the halfspace depth to such measures yields an upper bound with matches that lower bound. This upper bound recovers the one given by \citet{Karwa2017} for one-dimensional Gaussian mean estimation. 
\end{remark}

Note that Theorem~\ref{thm::main-dev-bound} has a straightforward restatement in terms of sample complexity.  
\begin{corollary}\label{cor::main-sc-cor}
Suppose that Conditions \ref{cond::phi_um}--\ref{cond::phi-k-reg} hold. 
Then the following holds
\begin{enumerate}[label=(\roman*)]
\item Suppose that $\D(x,\mu)\leq 1$ and $L\geq 1$. If $\pi=\mathcal{N}(\theta_\pi, \sigma_\pi^2I)$, 
then there exists universal constants $C,c>0$ such that for all $t\geq 0$, all $d>2$ and all $0<\gamma<1$, we have that $d(E_{\D,\mu},\tilde\theta_n)<t$ with probability at least $1-\gamma$ provided 
\begin{multline}\label{sc_normal}
    n>CK^2\Bigg(\frac{\log(1/\gamma)\vee[ \VC(\sF)\log(\frac{K}{c\alpha(t)})\vee 1]}{\alpha(t)^2}\\ \bigvee \frac{\log\left(1/\gamma\right)\vee\left(\frac{d(E_{\D,\mu},\theta_\pi)^2}{\sigma_\pi^2} \right)\vee d\log\left(\frac{\sigma_\pi\, L}{\alpha(t)}\vee  d  \right)}{\epsilon\alpha(t)}\Bigg).
\end{multline}
\item If instead $\pi\propto \ind{x\in \cube_{R}(\theta_\pi)}$ where $E_{\D,\mu}\subset\cube_{R}(\theta_\pi)$, then there exists universal constants $C,c>0$ such that for all $t\geq 0$, all $d\geq 1$ and all $0<\gamma<1$, we have that $d(E_{\D,\mu},\tilde\theta_n)\leq t$ with probability at least $1-\gamma$ provided
\begin{multline}\label{sc_cube}
    n>C K^2\Bigg(\frac{\log(1/\gamma)\vee [ \VC(\sF)\log(\frac{K}{c\alpha(t)})\vee 1]}{\alpha(t)^2}\\ \bigvee \frac{\log\left(1/\gamma\right)\vee d\log\left(\frac{ R}{d_{R,\theta_\pi}(E_{\D,\mu})\wedge \alpha(t)/L}\right)}{\alpha(t)\epsilon} \Bigg).
\end{multline}
\end{enumerate}
\end{corollary}
The proof is deferred to Appendix \ref*{app:tech-proofs}.

\begin{table}[t]
\caption{Table of the discrepancy function $\alpha(t)$ for different depth functions and underlying population measures. 
Recall that $\mu$ is $d$-version symmetric about zero if for any unit vector $u$, $X^\top u\eqd  a(u)Z$ where $X\sim \mu$ and $Z$ is a random variable such that $Z\eqd-Z$ and $a(u)=a(-u)$. 
Here, $v_1$ is the eigenvector associated with the largest eigenvalue $\lambda_1$ of $\Sigma$ and $F_0(x)=F(x,u,\mu)$, see \eqref{eqn::cdf_u}, and $\nu$ is the uniform measure on the $(d-1)$-dimensional unit sphere. }
\begin{tabular}{lcc}
\vspace{0.1em}
 $\D$/$\mu$:& $N_d(\theta_0,\Sigma)$      &  $d$-version symmetric       \\ 
\hline
& & \\
$\HD$  & $\Phi\left(\frac{t}{\sqrt{\lambda_1}}\right)-\frac{1}{2}$ &  $\frac{1}{2}-\sup\limits_{\norm{v}=1}\,\inf\limits_{\norm{u}=1} F_0\left(\frac{-t\,v^\top u}{a(u)}\right)$ \\
& & \\
$\IRW$  &  $\mathlarger{\int} \left|\frac{1}{2}-\Phi\left(\frac{t\, v_1^\top u}{\sqrt{u^\top \Sigma u}}\right)\right|\,d\nu $ & $\inf\limits_{\norm{v}=1}\mathlarger{\int}\left|\frac{1}{2}-F_0\left(\frac{t\,v^\top u}{a(u)}\right)\right|\,d\nu$ \\ 
& & \\
$\IDD$  &  $\frac{1}{4}-\mathlarger{\int}\Phi\left(\frac{-t \,v_1^\top u}{\sqrt{u^\top \Sigma u}}\right)\Phi\left(\frac{t \,v_1^\top u}{u^\top \Sigma u}\right)\,d\nu$  & $\frac{1}{4}-\sup\limits_{\norm{v}=1}\mathlarger{\int} F_0\left(\frac{t\,v^\top u}{a(u)}\right)F_0\left(\frac{-t\,v^\top u}{a(u)}\right)\,d\nu$
\end{tabular}
\label{table::alpha}
\end{table}

\subsection{Examples}
Let us now turn to some concrete examples that illustrate our result. 
\kr{We first introduce the halfspace depth, which will be used in our examples, where we apply our main results to the halfspace median. 
For $X\sim\mu$, define 
\begin{equation}\label{eqn::proj_dist}
   F(x,u,\mu)=\mu\left(X^\top u\leq x^\top u\right) .
\end{equation}
Halfspace depth, otherwise known as Tukey depth \citep{Tukey1974}, is defined as follows:
\begin{definition}[Halfspace depth] \label{def::hs}
The halfspace depth $\HD$ of a point $x\in \rdd$ with respect to $\mu\in\cM_1(\rdd)$ is
\begin{equation}\label{eqn::cdf_u}
    \HD (x,\mu)=\inf_{\norm{u}=1} F(x,u,\mu).
\end{equation}
\end{definition}
The halfspace depth of a point $x\in\rdd$ is the minimum probability mass contained in a closed halfspace containing $x$. 
The halfspace median is the maximizer of the halfspace depth (when it is unique).}
\kr{We first consider the canonical problem of multivariate Gaussian location estimation under a misspecified prior, where here, we say $\pi$ is misspecified when its center $\theta_\pi$ is far from the population halfspace median.} 
We quantify the relationship between finite-sample deviations, prior misspecification, privacy budget, dimension and statistical accuracy. 
In this case, omitting logarithmic terms, for a misspecification level of  $\norm{\theta_0-\theta_\pi}/\sigma_\pi\leq \sqrt{d}$, the finite-sample deviations of the private halfspace median reduce to $O(\sqrt{\lambda_1d/n}
 \vee\ \sqrt{\lambda_1}d/n\epsilon)$. 
Here, $\lambda_1$ is the largest eigenvalue of the covariance matrix of $\mu$. 

\kr{Next, we consider estimating the location parameter of a population measure $\mu$ made up of Cauchy marginals, which is known as the poly-Cauchy distribution. 
This setup is more difficult in the sense that the mean of the probability measure $\mu$ does not exist. 
For instance, theory concerning differentially private mean estimators cannot be applied in this setting.} 

\begin{example}[Multivariate Gaussians with misspecified prior]\label{exam::mvtg}
Consider the problem of estimating the location parameter $\theta_0$ of a multivariate Gaussian measure $\mu= \mathcal{N}(\theta_0,\Sigma)$ and we wish to estimate $\theta_0$. 
For this example, let $\lambda_1>\ldots>\lambda_d>0$ be the ordered eigenvalues of the covariance matrix $\Sigma$. 
Suppose that in order to estimate $\theta_0$ we use the exponential mechanism paired with the halfspace depth $\D=\HD$ (see Definition~\ref{def::hs}) in conjunction with a misspecified Gaussian prior: $\pi=\mathcal{N}(\theta_\pi, \sigma_\pi^2 I)$. 

In order to apply Theorem~\ref{thm::main-dev-bound}, we need to check Condition \ref{cond::phi_um}, \ref{cond::phi-k-reg} and Lipschitz continuity of $\HD$. 
First, note that Condition \ref{cond::phi_um} holds for any bounded depth function as defined in Definition \ref{def::depth}. 
Next, one can show that (see Theorem~\ref{thm::depth_cond}) the halfspace depth is $(1,\sF)$-regular where $\sF$ is the set of closed halfspaces in $\rdd$, thus, $\VC(\sF)=d+2$. 
Lastly, the map $x\mapsto \HD(x,\mu)$ is $L$-Lipschitz with $L=(2\pi\lambda_d)^{-1/2}$, implying Condition \ref{cond::phi_lip} is satisfied. 
We may now apply Theorem~\ref{thm::main-dev-bound}, for which we must compute $\alpha(t)$. 
To this end, it holds that $\alpha(t)\gtrsim  t C_R/\sqrt{\lambda_1},$
for some large $R>t$, see Lemmas \ref{lem::alpha_linear} and \ref{lem::alpha_mg}. 
Therefore, under halfspace depth, with probability at least $1-\gamma$, it holds that 
\begin{equation*}
\normm{\theta_0-\tilde\theta_n}\lesssim   \frac{\sqrt{\lambda_1}}{C_R}\left[\sqrt{\frac{\log(1/\gamma)\vee d\log n}{n}}\\
 \bigvee\ \frac{\log\left(1/\gamma\right)\vee\frac{\norm{\theta_0-\theta_\pi}^2}{\sigma_\pi^2}\vee d\log\left(n\epsilon\sigma_\pi/\lambda_d \vee d\right)}{n\epsilon}\right].
\end{equation*}
Suppose we would like to achieve a success rate of $1-e^{-d}$, i.e., $\gamma=e^{-d}$. 
This reduces the deviations bound to 
\begin{equation*}
\normm{\theta_0-\tilde\theta_n}\lesssim   \frac{\sqrt{\lambda_1}}{C_R}\left[\sqrt{\frac{d\log n}{n}}
 \bigvee\ \frac{\frac{\norm{\theta_0-\theta_\pi}^2}{\sigma_\pi^2}\vee d\log\left(n\epsilon\sigma_\pi/\lambda_d\vee d\right)}{n\epsilon}\right].
\end{equation*}
Observe now that in order for the prior effect to be negligible, we require $\norm{\theta_0-\theta_\pi}/\sigma_\pi \leq  \sqrt{d\log d}$ and that $\sigma_\pi$ is at most polynomial in $d$. 
This suggests that, up to a point, a larger prior variance $\sigma_\pi^2$ produces lower deviations.  
In this case, when the error level, $\epsilon$ and $\Sigma$ are fixed, we only need $n$ to grow slightly faster than the dimension  $n\geq C d\log d$ in order to maintain a consistent level of error. 
If the prior effect is negligible in the sense just described, then, omitting logarithmic terms for brevity, the deviations bound reduces to 
\begin{equation}\label{eqn::gp_bound}
    \normm{\theta_0-\tilde\theta_n}\lesssim \frac{\sqrt{\lambda_1}}{C_R}\cdot\left(\sqrt{\frac{d}{n}}
 \vee\ \frac{d}{n\epsilon}\right),
\end{equation}
We then have that the cost of privacy is eclipsed by the sampling error whenever $n\gtrsim d/\epsilon^2$. 
Therefore, differential privacy is free, provided that $d$ is not too large. 
Furthermore, \eqref{eqn::gp_bound}, makes it is easy to see that in this case, the private medians achieve the minimax lower bound for approximately differentially private sub-Gaussian mean estimation \citep{Cai2019}, if $\delta\propto n^{-k}$. 
\kr{There exists other private estimators that are minimax optimal for private Gaussian mean estimation, e.g., \citep{Kamath2018, Biswas2020}. These estimators may be computable in polynomial time, whereas the private halfspace median would take exponential time. However, existing estimators are not medians and therefore are not necessarily robust. For example, they may not perform well if the population measure is not multivariate Gaussian but instead is multivariate Cauchy. 
Robustness against violations of assumptions is especially important in private data analysis, where we may not have enough privacy budget to check assumptions. 
Note that a bound in terms of Mahalanobis distance (with respect to $\Sigma$), denoted $\norm{\cdot}_\Sigma$, is implied by \eqref{eqn::gp_bound}, through the inequality $||\theta_0-\tilde\theta_n||_\Sigma\leq ||\theta_0-\tilde\theta_n||/\sqrt{\lambda_d}$, viz.
\begin{equation*}
    \norm{\theta_0-\tilde\theta_n}_\Sigma\lesssim \frac{\sqrt{\lambda_1\lambda_d}}{C_R}\cdot\left(\sqrt{\frac{d}{n}}
 \vee\ \frac{d}{n\epsilon}\right).
\end{equation*}
It is possible the techniques used to prove Theorem~\ref{thm::main_result} could be used to prove a general deviations bound in terms of the Mahalnobis distance.}
\end{example}

\begin{table}[t]
\caption{Table of values for which different depth functions satisfy Theorem~\ref{thm::main-dev-bound}. Letting $\mu_u$ be the law of $X^{\top}u$ if $X\sim \mu$, $f_u$ is the density of $\mu_u$ with respect to the Lebesgue measure and $L'$ denotes $\sup_y\E{}{\norm{y-X}^{-1}}$. Note $\scA$ is the set of admissible measures for which $\D$ is a depth function, in the sense of Definition \ref{def::depth}. Here,  $\sC\subset\cM_1(\rdd)$ denotes the set of centrally symmetric measures over $\rdd$ and $\sN\subset\cM_1(\rdd)$ denotes the set of angularly symmetric measures over $\rdd$ that are absolutely continuous with respect to the Lebesgue measure. }
\vspace{1em}
\begin{tabular}{@{}l|llllll@{}}
\toprule
 & $\HD$ & $\IDD$ & $\IRW$ & $\SMD$& $\SD$& $\MSD$ \\ \hline
$\VC(\sF)$ & $  O(d)$ & $O(d)$ & $O(d)$ & $O(d^2\log d)$& $O(d)$ & $O(d)$\\ 
 $K$    & 1 & 3 & 4 & $d+1$& $d$ & 1\\
 $L$    & $\sup_u \norm{f_u}$ & $3\sup_u \norm{f_u}$& $2\sup_u \norm{f_u}$& $\sup_u \norm{f_u}$& $2L'$&$2\sqrt{d}L'$\\
 $\scA$ & $\cM_1(\rdd)$ & $\sC$ & $\sC$ & $\sN$& $\emptyset$ (see Remark~\ref{rem::spat})& $\emptyset$ \\
\end{tabular}
\label{table::cond}
\end{table}
\begin{example}[Cauchy marginals]\label{exam::cauchy}
Suppose that $\mu$ is the product probability measure constructed from independent Cauchy marginals with location parameter 0 and scale parameters $\sigma_1,\ldots,\sigma_d$ and we would like to use the exponential mechanism to privately estimate the halfspace median of $\mu$. 
In this case, $\mu$ is $d$-version symmetric\footnote{Recall that $\mu$ is $d$-version symmetric about zero if for any unit vector $u$, $X^\top u\eqd  a(u)Z$ where $X\sim \mu$ and $Z\sim\nu$ such that $Z\eqd-Z$ and $a(u)=a(-u)$. }, with $a(u)=\sum_{j=1}^d\sigma_j|u_j|$ where $u_j$ is the $j^{th}$ coordinate of $u$. 
A similar analysis as to that of Example \ref{exam::mvtg}, gives that halfspace depth (under this $\mu$) satisfies the conditions of Theorem~\ref{thm::main-dev-bound}. 
Note that in this setting, the Lipschitz constant scales with the dimension: $L=O(\sqrt{d})$. 
The next step is to compute $\alpha(t)$. Let $\bar{\sigma}=d^{-1}\sum_{j=1}^d\sigma_j$. 
Observe that when the scale parameters are constant in $d$, $\sup_u a(u)=~\sqrt{d}~\bar{\sigma}$. 
It follows that under the halfspace depth, $\alpha(t)= \frac{1}{\pi}\arctan\left(\frac{t}{\sqrt{d}\bar{\sigma}}\right)$. 
If $t<R$ for some large $R$, then we can write
$\alpha(t)\gtrsim t(\bar\sigma\sqrt{d}(1+R^2/d\bar\sigma^2))^{-1}.$
If we use a Gaussian prior with unit scale,\footnote{We don't lose generality here; we can arbitrarily shrink the scale of the $\mu$.} then, omitting logarithmic terms, the deviations bound becomes
\begin{align*}
     \normm{\tilde\theta_n}&\lesssim  \frac{ d\bar{\sigma}}{\sqrt{n}} \vee \frac{ d^{1/2}  \norm{\theta_\pi}^2\vee  d^{3/2}
 }{\epsilon n }.
\end{align*}
\kr{Here, observe that the deviations are a factor of $\sqrt{d}$ larger than in the Gaussian setting. 
The increase in the sampling error term is due to the fact that if $X$ follows the poly-Cauchy distribution, then there is a direction $u$ where $X^\top u$ has scale proportional to $\sqrt{d}$. 
This causes the discrepancy function to be dependent on the dimension. For the poly-Cauchy measure, considering all directions is actually a drawback of the halfspace median. 
On the other hand, the coordinate-wise median does not consider the direction with scale proportional to $\sqrt{d}$, and does not have this issue. Indeed, when we apply Theorem~\ref{thm::main-dev-bound} to the univariate private median, the resulting deviations bound is  $O(\sqrt{ d\max_{i\in[d]}\sigma_i^2/n} \vee  [ d^{3/2}
( \norm{\theta_\pi}_\infty^2\vee 1) /\epsilon n] )$. 
Observe that the sampling error term is $O(\sqrt{d/n})$. 
(Note that any other optimal, univariate private median estimator would achieve the same deviations bound, e.g., \citep{zamos2020}.)
On the other hand, the cost of privacy term in the deviations bound is $O(d^{3/2}/n\epsilon)$ for both the halfspace median and coordinate-wise median. This represents the difficulty of estimating location when $\mu$ is high-dimensional and heavy-tailed.}

\begin{figure}[t]
    \centering
    \includegraphics[width=2.8in]{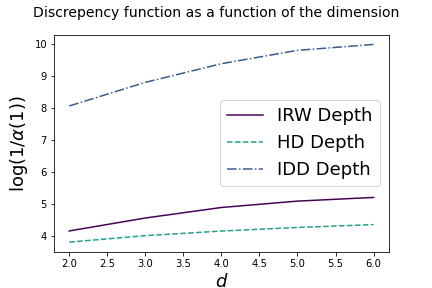}
    \caption{Logarithm of the inverse of the value of the discrepancy function at $t=1$, $\log(1/\alpha(1))$, as the dimension increases. Here, $\mu$ is made up of $d$ independent standard Cauchy marginals. 
    Notice that halfspace depth has the smallest values of $1/\alpha(1)$ which implies it has a smaller bound on the sample complexity. 
    This demonstrates the trade-off between accuracy and computability when choosing a depth function. 
    For the integrated depths, the value of the integrand is approximated via Monte Carlo simulation with 10,000 unit vectors.}
    \label{fig:alpha_cauchy}
\end{figure}
In the Gaussian setting, the discrepancy function is relatively similar between the halfspace depth to the integrated depths. 
However, in the Cauchy setting, the discrepancy functions differ between the halfspace depth, the integrated-rank weighted depth and the integrated dual depth. 
To see this, we must evaluate the integrals in Table \ref{table::alpha}. 
For the $\IRW$ this integral is given by
$$\alpha(t)= \frac{1}{\pi}\inf_{\norm{v}=1} \int\left| \arctan\left(\frac{t v^\top u}{\sum_{j=1}^d\sigma_j|u_j|}\right)\right|d\nu(u).$$
We evaluate these integrals via Monte Carlo simulation with 10,000 unit vectors. 
Figure \ref{fig:alpha_cauchy} plots the logarithm of the inverted discrepancy function computed at $t=1$, i.e., $\log (1/\alpha(1))$, for the three depth functions as the dimension increases. 
Observe that the halfspace depth's discrepancy function decreases the slowest. 
Thus, the halfspace depth gives the smallest sample complexity for heavy tailed estimation when the dimension is large. 
The difference in accuracy between the halfspace depth and the integrated depths demonstrates the trade-off between computability and statistical accuracy. 
\end{example}

\subsection{Simulation}\label{sec::simu}

To complement our theoretical results, we demonstrate the cost of privacy through simulation. 
\kr{In order to simulate a private median, we use (non-private) discretized Langevin dynamics. 
We recognize that privacy guarantees are affected by the use of Markov chain Monte Carlo (MCMC) methods; the current implementation may not guarantee $\epsilon$-differential privacy. 
The purpose of the current implementation is to investigate the statistical accuracy of the methods. 
The computation of specific private medians via private MCMC is an interesting direction of future research.} 
For a discussion on the level of privacy guaranteed by MCMC, see \citep{mcmcps}. 
As a by-product of our work, we also provide a fast implementation of a modified version of the non-private integrated dual depth-based median. 
This implementation shows that this multivariate median can be computed quickly, even when the dimension is large. 
For example, we can compute the median of ten thousand 100-dimensional samples in less than one second on a personal computer.\footnote{The code was run on a desktop computer with an Intel i7-8700K 3.70GHz chipset and an Nvidia GTX 1660 super graphics card.} 
We call this modified version of the integrated dual depth the smoothed integrated dual depth, see Section~\ref{sec::sm-idd} for more details. 
Our code is available on GitHub. 

Aside from the private smoothed integrated dual depth median, we also computed the non-private smoothed integrated dual depth median and the private coin-press mean of \citet{Biswas2020}. 
When computing the coin-press mean, we used the existing GitHub implementation provided by the authors. 
The bounding ball for the coin-press algorithm had radius $10\sqrt{d}$ and was run for four iterations. 

\begin{figure}[t]
\begin{minipage}[c]{2.8in} 
    \centering
    \includegraphics[width=2.8in]{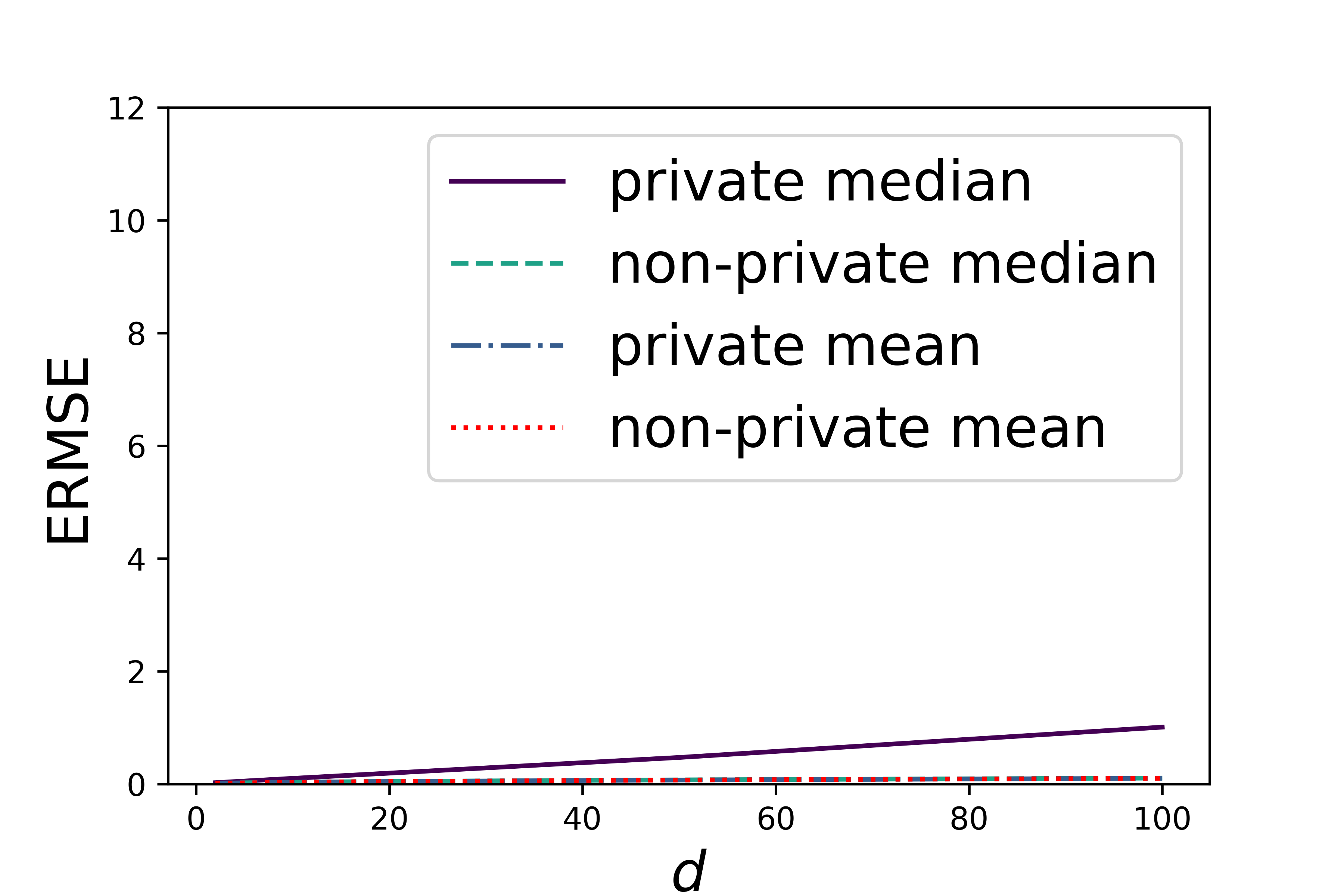}
\end{minipage}
\begin{minipage}[c]{2.8in} 
    \centering
    \includegraphics[width=2.8in]{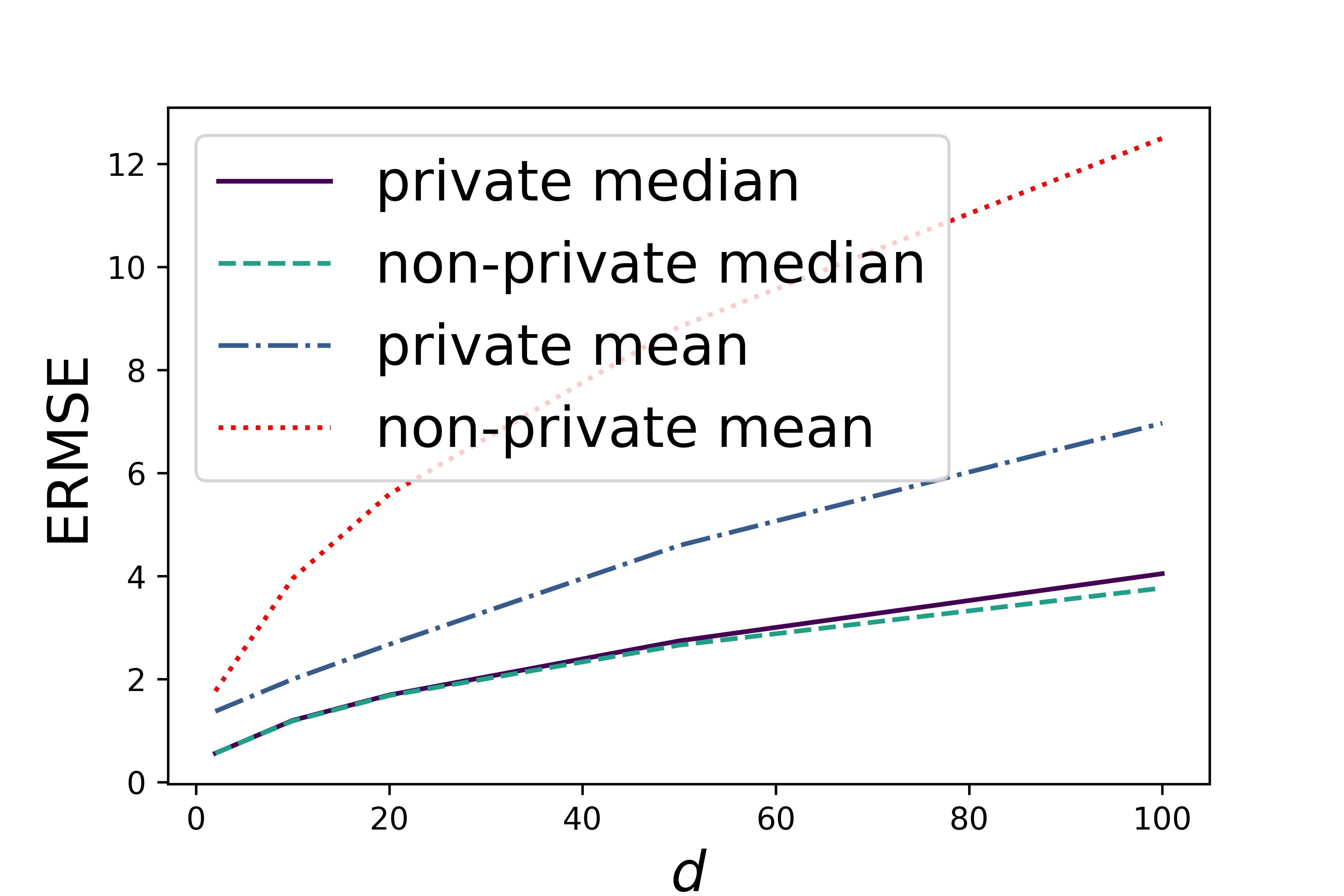}
\end{minipage}
\caption{Empirical root mean squared error (ERMSE) of the location estimators under (left) Gaussian data and (right) contaminated Gaussian data. In the uncontaminated setting, the ERMSE of the private median grows at a slightly faster rate than its non-private counterpart and the mean estimators. This is due to the cost of privacy. On the other hand, in the contaminated setting, the ERMSE of median estimators is much lower than that of the mean estimators. This demonstrates the usual trade-off between accuracy and robustness between the mean and the median. For a zoomed in version of the left plot, see Figure~\ref*{fig:dimension2} in the appendix.}
    \label{fig:dimension}
\end{figure}
We simulated fifty instances with $n=10,000$ over a range of dimensions from two to one hundred. 
The data were either generated from a standard Gaussian measure or contaminated standard Gaussian measure. 
In the contaminated model, 25\% of the observations had mean $(5,\ldots,5)$. 
The privacy parameter $\epsilon$ and the smoothing parameter $s$ were both fixed at 10 and $\pi=\cN(0,25dI)$. 

Figure \ref{fig:dimension} shows the empirical root mean squared error (ERMSE) for the different location estimators as the dimension increases. 
Notice that the private median ERMSE grows at a similar rate as that of the non-private ERMSE. 
The mean estimators perform very well in the uncontaminated setting, but poorly in the contaminated setting. 
The median estimators do not perform as well as the mean estimators in the standard Gaussian setting, but do not become corrupted in the contaminated setting. 
This demonstrates the usual trade-off between accuracy and robustness between the mean and the median.


\section{An elementary concentration bound}\label{sec::concen--bound}
In this section, we introduce the underlying concentration bound used to produce Theorem~\ref{thm::main-dev-bound}, which we imagine will be useful more broadly. 
Our result shows that for a general objective function $\phi$, $\tilde{\theta}_n$ concentrates around $\theta_0 = \argmax_{\theta\in\rdd}\phi(\theta,\mu)$ with high probability. 
\kr{In particular, Theorem~\ref{thm::main-dev-bound} is specifically for privately estimating multivariate medians, given normal or uniform priors, meanwhile the concentration bound presented in this section applies more broadly to $\tilde{\theta}_n$ drawn from the more general \eqref{eqn::em-dens} as an estimate of $\theta_0 = \argmax_{\theta\in\rdd}\phi(\theta,\mu)$, under a general prior and a relaxed version of Condition~\ref{cond::phi_lip}.} 
The relaxed version of Condition \ref{cond::phi_lip} is stated below.
\begin{condition}\label{cond::phi_uc} 
The map $x\mapsto \phi(x,\mu)$ is $\pi$-a.s.\ uniformly continuous with modulus of continuity $\omega$.
\end{condition}
We now introduce some new important functions. 
Replacing the depth function $\D$ with $\phi$, we can define the generalized discrepancy function of the pair $(\phi,\mu)$ as 
$\alpha(t) = \phi(\theta_0,\mu) - \sup_{x\in B^c_t(E_{\phi,\mu})} \phi(x,\mu),
$
where $\theta_0\in E_{\phi,\mu}.$
The \emph{calibration function} of the triple $(\phi,\mu,\pi)$ is the function
\begin{equation}\label{eqn::calibration-func}
    \psi(\lambda) =\min_{r>0} [\lambda\cdot \omega(r) -\log \pi(B_r(E_{\phi,\mu}))]
\end{equation}
Note that $\psi(\lambda)$ is increasing and tends to $\infty$ as $\lambda\to\infty$. The \emph{rate function} of the prior is given by 
\begin{equation}
    I(t)=-\log\pi(B_t^c( E_{\phi,\mu})). 
\end{equation}
The concentration/entropy function of the prior is the rate at which the prior concentrates around the maximizers of the population risk. 
For large $t$, it is the rate of decay of the tails of $\pi$. 

The concentration bound is given in the following theorem, which quantifies the accuracy of $\tilde{\theta}_n$ drawn from \eqref{eqn::em-dens} as an estimate of $\theta_0$.
\begin{theorem}\label{thm::main_result}
Suppose that for the pair $(\phi,\mu)$, Conditions \ref{cond::phi_um}, \ref{cond::phi-k-reg} and \ref{cond::phi_uc} hold for some $K>0$. Then, there are universal constants $c_1,c_2>0$ such that the following holds. For any $t,\beta>0$, we have that
\[
 \Pr\left(\max_{\theta_0\in E_{\phi,\mu}}\norm{\tilde{\theta}_n-\theta_0}\geq t\right)\leq  \left(\frac{c_1 n}{\VC(\sF)}\right)^{\VC(\sF)}e^{-c_2n \left[\frac{1}{\beta}I(t)\vee \alpha(t)\right]^2/K^2} + e^{-\beta\alpha(t)/2-I(t)/2+\psi(\beta)}.
    \] 
\end{theorem}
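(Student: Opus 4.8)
The plan is to split the event $\{\max_{\theta_0 \in E_{\phi,\mu}} \|\tilde\theta_n - \theta_0\| \geq t\}$ according to whether the empirical objective $\phi(\cdot,\hmu_n)$ is uniformly close to the population objective $\phi(\cdot,\mu)$. Concretely, introduce the ``good'' event $G = \{\sup_x |\phi(x,\hmu_n) - \phi(x,\mu)| \leq \delta\}$ for a threshold $\delta$ to be optimized at the end (this will end up being of order $\frac{1}{\beta} I(t) \vee \alpha(t)$). By $(K,\sF)$-regularity (Condition~\ref{cond::phi-k-reg}), on the complement $G^c$ we must have $d_\sF(\hmu_n,\mu) > \delta/K$, and since $\VC(\sF) < \infty$, the Vapnik--Chervonenkis / bounded-differences bound on $\Pr(d_\sF(\hmu_n,\mu) > \delta/K)$ gives exactly the first term $\left(\frac{c_1 n}{\VC(\sF)}\right)^{\VC(\sF)} e^{-c_2 n (\delta/K)^2}$. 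So the whole first term of the bound is simply $\Pr(G^c)$.

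On the good event $G$, the argument is deterministic conditional on $\hmu_n$: I bound the conditional probability $Q_{\hmu_n,\beta}(B_t^c(E_{\phi,\mu}))$ that a draw from the exponential mechanism lands far from the maximizer set. Writing $Q_{\hmu_n,\beta}(A) = \frac{\int_A e^{\beta\phi(\theta,\hmu_n)}d\pi}{\int e^{\beta\phi(\theta,\hmu_n)}d\pi}$, I lower-bound the denominator by integrating over a small ball $B_r(E_{\phi,\mu})$: on $G$, for $\theta \in B_r(E_{\phi,\mu})$ we have $\phi(\theta,\hmu_n) \geq \phi(\theta_0,\mu) - \delta - \omega(r)$ using uniform continuity (Condition~\ref{cond::phi_uc}), so the denominator is at least $\pi(B_r(E_{\phi,\mu})) \exp(\beta[\phi(\theta_0,\mu) - \delta - \omega(r)])$; optimizing over $r$ and exponentiating brings in the calibration function $\psi(\beta)$ and a factor $e^{-\psi(\beta)}$. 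For the numerator over $A = B_t^c(E_{\phi,\mu})$, the definition of the discrepancy function $\alpha$ gives $\phi(\theta,\mu) \leq \phi(\theta_0,\mu) - \alpha(t)$ there, so on $G$, $\phi(\theta,\hmu_n) \leq \phi(\theta_0,\mu) - \alpha(t) + \delta$, and the numerator is at most $\pi(B_t^c(E_{\phi,\mu})) \exp(\beta[\phi(\theta_0,\mu) - \alpha(t) + \delta]) = e^{-I(t)} \exp(\beta[\phi(\theta_0,\mu) - \alpha(t) + \delta])$. Taking the ratio, the $\phi(\theta_0,\mu)$ terms cancel and I get $Q_{\hmu_n,\beta}(B_t^c) \leq \exp(-\beta\alpha(t) + 2\beta\delta - I(t) + \psi(\beta))$ on $G$.

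Now I combine: $\Pr(\|\tilde\theta_n - \theta_0\| \geq t) \leq \Pr(G^c) + \mathbb{E}[\mathbbm{1}_G\, Q_{\hmu_n,\beta}(B_t^c)] \leq \Pr(G^c) + e^{-\beta\alpha(t) + 2\beta\delta - I(t) + \psi(\beta)}$. Finally I choose $\delta$ to balance the two contributions to the exponent of the second term: taking $\delta = \frac{1}{4}(\frac{1}{\beta}I(t) \vee \alpha(t))$ (up to the constant absorbed into $c_2$) makes the second-term exponent at most $-\beta\alpha(t)/2 - I(t)/2 + \psi(\beta)$ while the first term becomes $\left(\frac{c_1 n}{\VC(\sF)}\right)^{\VC(\sF)} e^{-c_2 n(\frac{1}{\beta}I(t) \vee \alpha(t))^2/K^2}$, as claimed. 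The main obstacle I anticipate is making the VC-type concentration bound for $d_\sF(\hmu_n,\mu)$ fully rigorous with the stated prefactor $(c_1 n/\VC(\sF))^{\VC(\sF)}$ — this requires the right uniform deviation inequality (symmetrization plus a growth-function bound à la Vapnik--Chervonenkis, combined with bounded differences, since the $g \in \sF$ are $[0,1]$-valued) and care that the constants are universal; the deterministic part on $G$ is routine once the event decomposition is set up, and the only subtlety there is ensuring the maximizer $\theta_0$ over which we optimize is handled uniformly over $E_{\phi,\mu}$, which is immediate since every bound above is stated in terms of $E_{\phi,\mu}$ and $\phi(\theta_0,\mu) = \max_x \phi(x,\mu)$ is constant on that set.
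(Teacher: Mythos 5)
Your proposal follows essentially the same approach as the paper's proof: the paper decomposes on the event $A_{n,y}=\{\sup_x|\phi(x,\hmu_n)-\phi(x,\mu)|<y\}$ (your $G$), bounds $\Pr(A_{n,y}^c)$ using $(K,\sF)$-regularity together with a Talagrand/VC uniform-deviation inequality, and on $A_{n,y}$ performs exactly the numerator/denominator estimate you describe, introducing $\alpha(t)$, $I(t)$, and $\psi(\beta)$ before optimizing the threshold. Your deterministic-on-$G$ presentation (bound $Q_{\hmu_n,\beta}(B_t^c(E_{\phi,\mu}))$ and integrate) is a clean rephrasing of the paper's log-integral manipulation, and your choice $\delta=\tfrac14\bigl(\tfrac{1}{\beta}I(t)\vee\alpha(t)\bigr)$ is in fact the correct scaling to reach the stated exponent $-\beta\alpha(t)/2-I(t)/2+\psi(\beta)$ from $-\beta\alpha(t)-I(t)+2\beta\delta+\psi(\beta)$.
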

\begin{remark}[Comparison to \citet{McSherry2007} and \citet{Chaudhuri2012}]
It is helpful to compare our result to the widely used accuracy bound of \citet{McSherry2007}. The latter controls
the accuracy of the estimator $\tilde\theta_n$ as measured by the empirical objective value: it bounds the difference between the empirical objective value achieved by the exponential mechanism and the maximal empirical objective value.  More precisely, if we let $S_t=\{x\in \rdd\colon\ \phi(x,\hmu_n)-\sup_\theta \phi(\theta,\hmu_n))\geq 2t\}$, the result of \citet{McSherry2007} says that 
$$\Prr{\sup_\theta \phi(\theta,\hmu_n)-\phi(\tilde{\theta}_n,\hmu_n)>2t}\leq e^{-t\epsilon }/\pi(S_t).$$
As well, note that there is no analogous sample complexity bound because this approximation is not improving with the sample size. 
By contrast, \eqref{eqn::private-conc-bound} concerns the consistency of $\tilde\theta_n$ as an estimator, namely the distance between the estimator and any maximizer of the population objective, that is $\lVert\tilde{\theta}_n-\theta_0\rVert$. Furthermore, the quality of this concentration bound depends on $n$ and thus a sample complexity bound can be read off readily. 
\kr{In addition to \citet{McSherry2007}, \citet{Chaudhuri2012} also provide an accuracy bound for $M$-estimators drawn from the exponential mechanism in the univariate setting. They give the number of samples required for the error of the private estimator to be within $t>0$ of the error of the non-private estimator. 
Our bound can be applied to the same problem, and we also recover the requirement that $n\gtrsim 1/t\epsilon$.}
\end{remark}

\begin{proof}
We first prove that the regularity conditions imply that the empirical objective function concentrates around the population objective function. 
By Condition~\ref{cond::phi-k-reg}, there are some $K>0$ and $\sF$ with $\VC(\sF)<\infty$ such that $\phi$ is $(K,\sF)$-regular, i.e.,
\begin{equation}\label{eqn::epbound}
    \sup_x|\phi(x,\mu)-\phi(x,\hat{\mu}_n) |\leq K\sup_{g\in\sF}|\E{\hat{\mu}_n}{g(X)}-\E{\mu}{g(X)} |.
\end{equation}
Now, for a probability measure $Q$ over $\sB$, let $N(\tau,\sF,L_2(Q))$ be the $\tau$-covering number of $\sF$ with respect to the $L^2(Q)$-norm. 
By assumption, for any $g\in \sF$, $\norm{g}_\infty\leq 1$ and $\VC(\sF)<\infty$. This fact implies that there is some universal $c>0$ such that for any $0<\tau<1$, it holds that,
\begin{align*}
    \sup_Q N\left(\tau, \sF, L^2(Q)\right)
    \lesssim \VC(\sF)\left(\frac{c}{\tau}\right)^{2\VC(\sF)}.
\end{align*}
    See \citep[Theorem 7.12]{sen2018gentle}, or  \citep[Theorem 9.3]{kosorok2008introduction}. 
A straightforward manipulation of Talagrand's inequality \cite[Theorem 1.1]{Talagrand1994} implies that there is some universal $c>0$ such that for all $n\geq 1$ and for all $t>0$, it holds that
\begin{equation*}
\Prr{\sqrt{n}\sup_{g\in\sF}\left|\frac{1}{n}\sum_{i=1}^n g(X_i)-\E{}{g(X)}\right|\geq t}\leq \left(\frac{cn}{\VC(\sF)}\right)^{\VC(\sF)}e^{-2t^2}.
\end{equation*}
For all $t>0$, it holds that
\begin{align}
    \Prr{\sup_{x\in\rdd}|\phi(x,\hat{\mu}_n)-\phi(x,\mu)| >t} 
    \label{eqn::ob_con}
    &\leq \left(\frac{c n }{\VC(\sF)}\right)^{\VC(\sF)}e^{-2nt^2/K^2}.
\end{align}
where to go from the second inequality to the third  we multiplied both sides of the inequality by $\sqrt{n}$.

We can now use the concentration result \eqref{eqn::ob_con} to prove the rest of the theorem. For brevity, let $D_{n,t}=\{\max_{\theta_0\in E_{\phi,\mu}}\norm{\tilde{\theta}_n-\theta_0}>t\},$
and for any $y>0$, define the event $A_{n,y} = \{ \norm{\phi(\cdot,\hat{\mu}_n)-\phi(\cdot,\mu)}_\infty<y\}$. 
It follows from the law of total probability and \eqref{eqn::ob_con} that for any $y>0$ it holds that
\begin{align}
    \Pr\left(D_{n,t}\right)&=\Pr\left(D_{n,t}\cap A_{n,y}^c\right)+ \Pr\left(D_{n,t}\cap A_{n,y}\right)
\leq\Pr\left(A_{n,y}^c\right)+ \Pr\left(D_{n,t}\cap A_{n,y}\right)\nonumber\\
    \label{eqn::step0}
    &\leq   \left(\frac{c n }{K\VC(\sF)}\right)^{\VC(\sF)}e^{-2ny^2/K^2} +\Pr\left(D_{n,t}\cap A_{n,y}\right).
\end{align}

We now focus on the right-hand term above. 
By definition, we have that
\begin{align}\label{eqn::step}
    \frac{1}{\beta}\log\Pr\left(D_{n,t}\cap A_{n,y}\right)&=\frac{1}{\beta}\log\int_{A_{n,y}}\frac{ \int_{B_t^c( E_{\phi,\mu})} \exp{\left(\beta\phi(x,\hat{\mu}_n)\right)}d\pi}{ \int_{\rdd} \exp{\left(\beta\phi(x,\hat{\mu}_n)\right)}d\pi}d\mu.
\end{align}
On $A_{n,y}$ it holds that
    $\exp{\left(\beta\phi(x,\mu)\right)}\exp(-\beta y)\leq \exp{\left(\beta\phi(x,\hat{\mu}_n)\right)}\leq \exp{\left(\beta\phi(x,\mu)\right)}\exp{(\beta y)}.$
Applying this to the right-hand side of \eqref{eqn::step} results 
in
\begin{align}\label{eqn::logp}
     \frac{1}{\beta}\log\Pr\left(D_{n,t}\cap A_{n,y}\right)&\leq \frac{1}{\beta}\log\frac{ \int_{B_t^c( E_{\phi,\mu})} \exp{\left(\beta\phi(x,\mu)\right)}d\pi}{ \int_{\rdd} \exp{\left(\beta\phi(x,\mu)\right)}d\pi}+2y.
\end{align}

We now bound $\E{\pi}{\exp{\left(\beta\phi(x,\mu)\right)}}$ below. 
For any $r>0$, Condition \ref{cond::phi_uc} implies that
\begin{align*}
    \frac{1}{\beta}\log \int_{\rdd} \exp{\left(\beta\phi(x,\mu)\right)}d\pi&\geq \frac{1}{\beta}\log \int_{B_r(E_{\phi,\mu})} \exp{\left(\beta\phi(x,\mu)\right)}d\pi\nonumber\\
    &=\phi(\theta_0,\mu)+\frac{1}{\beta}\log \int_{B_r(E_{\phi,\mu})} \exp{\left(\beta(\phi(x,\mu)-\phi(\theta_0,\mu))\right)}d\pi\nonumber\\
    &\geq\phi(\theta_0,\mu)-\omega(r) +\beta^{-1}\log\pi(B_r(E_{\phi,\mu})).
\end{align*}
Maximizing the right-hand side of the above, and recalling the definition of the calibration function, $\psi$, from \eqref{eqn::calibration-func}, yields 
\begin{equation*}
 \frac{1}{\beta}\log \int_{\rdd} \exp{\left(\beta\phi(x,\mu)\right)}d\pi
    \geq\phi(\theta_0,\mu)-\psi(\beta)/\beta.
\end{equation*}
Plugging this lower bound into \eqref{eqn::logp} yields
\begin{align*}
   \frac{1}{\beta}\log\Pr\left(D_{n,t}\cap A_{n,y}\right)&\leq\frac{1}{\beta}\log \int_{B_t^c( E_{\phi,\mu})} \exp{\left(\beta\phi(x,\mu)\right)}d\pi -\phi(\theta_0,\mu)+\psi(\beta)/\beta+2y\\
    &\leq \sup_{x\in B_t^c( E_{\phi,\mu})} \phi(x) -\phi(\theta_0)+\frac{1}{\beta}\log\pi(B_t^c( E_{\phi,\mu}))+\psi(\beta)/\beta+2y\\
    &= -\alpha(t)- I(t)/\beta+ \psi(\beta)/\beta+2y,
\end{align*}
where the last line follows from the definitions of $\alpha(t)$ and $I(t)$. 
Rewriting the last inequality gives that
\begin{align}\label{eqn::step4}
\Pr\left(D_{n,t}\cap A_{n,y}\right)&\leq  \exp\left(-\beta\alpha(t)-I(t)+\psi(\beta)+2 \beta y\right).
\end{align}

Now, setting $y=f(t,\beta)=\alpha(t)/2 \vee I(t)/2\beta$ and plugging  \eqref{eqn::step4} into \eqref{eqn::step0} results in
\begin{align}
    \Pr\left(\max_{\theta_0\in E_{\phi,\mu}}\norm{\tilde{\theta}_n-\theta_0}>t\right)&\leq  \left(\frac{c n}{\VC(\sF)}\right)^{\VC(\sF)}e^{-nf(t,\beta)^2/(8K^2)} +\Pr\left(D_{n,t}\cap A_{n,f(t,\beta)}\right)\nonumber\\
    \label{eqn::step5}
    &\leq \left(\frac{c n}{\VC(\sF)}\right)^{\VC(\sF)}e^{-nf(t,\beta)^2/8K^2}+ e^{-\beta\alpha(t)-I(t)+ \psi(\beta)+2\beta f(t,\beta)}.
\end{align}
Now, note that 
$\alpha(t)+I(t)/\beta-f(t,\beta)\geq \alpha(t)/2+ I(t)/2\beta.$
Plugging this inequality into \eqref{eqn::step5} yields the desired result.
\end{proof}

It is helpful to view the two terms in the above bounds separately. The first term measures the sample complexity of estimation and, in the absence of sampling considerations, recovers the concentration properties of classical (non-private) estimators. (In particular, the non-private bound can be obtained from the above by formally setting $\beta=\infty$.) The second term is novel and is a measure of the cost of sampling and encodes the trade-off between the prior, $\pi$, the parameter $\beta$, and the objective function $\alpha$.

Through taking $\beta=n\eps/2K$, Theorem~\ref{thm::main_result} immediately provides a consistency bound (and thereby a sample complexity bound) for the exponential mechanism:
\begin{multline}\label{eqn::private-conc-bound}
     \Pr\left(\max_{\theta_0\in E_{\phi,\mu}}\norm{\tilde{\theta}_n-\theta_0}\geq t\right)\leq \left(\frac{c_1 n}{\VC(\sF)}\right)^{\VC(\sF)}e^{-c_2n \left[\frac{1}{n\epsilon}I(t)\vee \alpha(t)/2K\right]^2}\\ + e^{-n\epsilon\alpha(t)/4K-I(t)/2+\psi(n\epsilon/2K)}.
\end{multline}

Let us pause here to interpret this result. The cost of consistent estimation is encoded in the first terms of \eqref{eqn::private-conc-bound} and, in particular, can be related to the non-private setting as well.
The cost of privacy is encoded in the second term in \eqref{eqn::private-conc-bound}.
Finally, we note that the second term in $\eqref{eqn::private-conc-bound}$ encodes a trade-off between an effect of the prior and the privacy parameter.

Equation \eqref{eqn::private-conc-bound} readily yields the sample complexity of an estimator drawn from the exponential mechanism.\footnote{To see this briefly, assume for simplicity that $\pi$ is the uniform measure on a hypercube whose side lengths are at most polynomial in $d$. 
In addition, assume that $\theta_0$ is well captured by this hypercube, i.e., $\theta_0$ is at least $e^{-d}$ far from any boundary of the cube. 
Suppose we want to estimate a maximizer of $\phi(\cdot,\mu)$ within some error level $t$ with probability at least $1-e^{-d}$. 
Lemma \ref*{lem::gen_sc_bnd} and Corollary~\ref{cor::main-sc-cor} give that it is sufficient for $\VC(\sF)$ to be polynomial in $d$ to ensure that the sample complexity is polynomial in $d$.} 
This yields Corollary~\ref{cor::main-sc-cor}, and, for instance, Lemma~\ref{lem::gen_devi_Bound} below. 

\section{Proof of Theorem~\ref{thm::main-dev-bound}}\label{app::proofs}
We now prove Theorem~\ref{thm::main-dev-bound} with a series of lemmas.
Let us first note the following fact, whose proof is deferred to Appendix \ref*{app:tech-proofs}. 
\begin{lemma}\label{lem::deriv}
Let $a,b,c\in \re^+$. 
The function $h(x)=a x-b\log \left(c x /b\right)$ is increasing and positive for 
\begin{align*}
x\geq \frac{2b}{a}[\log\left(c/a\right)\vee 1].
\end{align*}
\end{lemma}
\begin{lemma}\label{lem::gen_devi_Bound}
Suppose that Conditions \ref{cond::phi_um} and \ref{cond::phi-k-reg} hold with $\phi=D$ for some $K>0$. 
In addition, suppose that the map $x\mapsto \D(x,\mu)$ is $L$-Lipschitz for some $L>0$.  
Then, there exists a universal constant $c>0$, such that for all $n,d\geq 1$ and all $0<\gamma<1$, with probability at least $1-\gamma$, it holds that
\begin{multline}\label{eqn::gen_devi_Bound}
  d(E_{\D,\mu},\tilde\theta_n)\lesssim   \alpha^{-1}\Bigg(cK\Bigg[\sqrt{\frac{\log(1/\gamma)\vee \VC(\sF)\log n}{n}}\\
 \bigvee\ \frac{\log\left(1/\gamma\right)\vee\left(1-\log\pi(B_{\frac{2K}{Ln\epsilon}}(E_{\phi,\mu}))\right)}{n\epsilon}\Bigg]\Bigg) . 
\end{multline}
\end{lemma}
\begin{proof}
Note that all of the conditions of Theorem~\ref{thm::main_result} hold, and so we can apply Theorem~\ref{thm::main_result} with $\beta=n\epsilon/2K$. 
Proving the deviation bound then amounts to finding a lower bound on $t$ which ensures that:
\begin{align}\label{eqn::gen_devi_bound1}
  \left(\frac{c_1 n}{\VC(\sF)}\right)^{\VC(\sF)}e^{-c_2n \left[2KI(t)/n\epsilon\vee \alpha(t)\right]^2/K^2} + e^{-n\epsilon\alpha(t)/2K-I(t)/2+\psi(n\epsilon/2K)}\coloneqq I+II\leq \gamma. 
\end{align}
We will do this by finding $t$ such that both $I$ and $II$ are less than $\gamma/2$.
We start with finding $t$ such that $I\leq \gamma/2$. 
First, note that 
$$ \left(\frac{c_1 n}{\VC(\sF)}\right)^{\VC(\sF)}e^{-c_2n \left[2KI(t)/n\epsilon\vee \alpha(t)\right]^2/K^2}\leq  \left(\frac{c_1 n}{\VC(\sF)}\right)^{\VC(\sF)}e^{-c_2 n  \alpha(t)^2/K^2}.$$
If we can find a lower bound on $t$ which implies that
\begin{equation}\label{eqn::gen_devi_step00}
\left(\frac{c_1 n}{\VC(\sF)}\right)^{\VC(\sF)}\leq e^{c_2 n \alpha(t)^2/2K^2},
\end{equation}
then such a lower bound, together with the bound $t\gtrsim \alpha^{-1}\left(\sqrt{{K^2\log(2/\gamma)}/{n}}\right)$ yields that $I\leq \gamma/2$. 
It remains to find $t$ such that \eqref{eqn::gen_devi_step00} holds.


To this end, define the function $h(n)=c_2n \alpha(t)^2/2K^2-\VC(\sF)\log \left(c_1 n/\VC(\sF)\right).$
The condition \eqref{eqn::gen_devi_step00} is then equivalent to the condition $h(n)\geq 0$. 
Note that $h(n)$ is in the form of Lemma \ref{lem::deriv} with $a=c_2 \alpha(t)^2 /2K^2$, $b=\VC(\sF)$ and $c=c_1$. 
Applying Lemma \ref{lem::deriv} yields that $h(n)\geq 0$ provided that 
\begin{equation}\label{eqn::gen_devi_bnd00}
     n\gtrsim K^2\frac{\VC(\sF)\log\left(\frac{K^2}{c\alpha(t)^2}\right)\vee 1}{ \alpha(t)^2},
\end{equation}
for some universal $c>0$. 
Now, provided that $\alpha^2(t)\gtrsim K^2/n$, \eqref{eqn::gen_devi_bnd00} is satisfied when 
\begin{equation*}
    \alpha(t)^2\gtrsim K^2{\VC(\sF)\log n}/{n}.
\end{equation*}
It follows that there exists a universal constant $c>0$ such that \eqref{eqn::gen_devi_step00} holds provided that
\begin{equation}\label{eqn::gen_devi_bnd22}
t\gtrsim \alpha^{-1}\left(cK\sqrt{\frac{\log(1/\gamma)\vee \VC(\sF)\log n}{n}}\right).
\end{equation}

We now turn to finding $t$ such that $II\leq \gamma/2$. 
The $II\leq \gamma/2$ is satisfied when
\begin{equation}\label{eqn::last_gen_devi}
\alpha(t)>4K\frac{\log\left(2/\gamma\right)\vee \psi(n\epsilon/2K)}{n\epsilon}>2K\frac{\log\left(2/\gamma\right)-I(t)/2+\psi(n\epsilon/2K)}{n\epsilon}. 
\end{equation}
It remains to simplify $\psi(n\epsilon/2K)$. 
To this end, given that by assumption $\D$ is $L$-Lipschitz, we have that $\omega(r)\leq L r$. Plugging this in to the definition of $\psi$ yields:
\begin{equation*}
\psi(n\epsilon/2K)\leq\min_{r>0}\left[\frac{n\epsilon}{2K}\cdot L\cdot r-\log\pi(B_{r}(E_{\phi,\mu}))\right].
\end{equation*}
Next, taking the specific choice $r=2K/Ln\epsilon$ yields 
    $\psi(n\epsilon/2K)\leq  1-\log\pi(B_{2K/Ln\epsilon}(E_{\phi,\mu})).$
Plugging this inequality into \eqref{eqn::last_gen_devi}, and combining the result with \eqref{eqn::gen_devi_bnd22} yields the desired result.
\end{proof}
\noindent We now note the following lemmas whose proofs are deferred to Appendix \ref*{app:tech-proofs}. 
\begin{lemma}\label{lem::norm_logpi}
If $\pi=\mathcal{N}(\theta_\pi, \sigma_\pi^2I)$ with $\sigma_\pi\geq 1/4$, then for all  $E\subset\rdd$, all $d>2$ and all $R\leq \sigma_\pi$ it holds that
\begin{equation}\label{sc_normal_1}
 -\log \pi(B_{R}(E))\lesssim \frac{d(E,\theta_\pi)^2}{\sigma_\pi^2}+ d\log\left(\frac{\sigma_\pi}{R}\vee d\right).
\end{equation}
\end{lemma}
\begin{lemma}\label{lem::cube_logpi}
Suppose that $\pi\propto \ind{x\in \cube_{R}(\theta_\pi)}$ where $E\subset\cube_{R}(\theta_\pi)$, then for all $E\subset\rdd$, $d> 0$ and all $r>0$ it holds that 
\begin{align*}
-\log \pi(B_{r}(E)) \lesssim  d\log\left(\frac{R}{d_{R,\theta_\pi}(E)\wedge r}\right).
\end{align*}
\end{lemma}
\noindent We can now prove Theorem~\ref{thm::main-dev-bound}. 
\begin{proof}[Proof of Theorem~\ref{thm::main-dev-bound}]
First note that by assumption, all the conditions of Lemma \ref{lem::gen_devi_Bound} are satisfied. 
To prove \eqref{devi_normal} and \eqref{devi_cube} we apply Lemma \ref{lem::norm_logpi} and Lemma \ref{lem::cube_logpi}, respectively, in conjunction with Lemma \ref{lem::gen_devi_Bound}. 
To apply Lemma \ref{lem::norm_logpi}, first, observe that together, the assumptions that $L>1$, $\sigma_\pi^2\geq 1/16$ and $n\geq 8K/\epsilon$ imply that $2K/Ln\epsilon\leq \sigma_\pi$. 
Applying Lemma \ref{lem::norm_logpi} with $R=2K/Ln\epsilon$ and $E=E_{\D,\mu}$ in conjunction with \eqref{eqn::gen_devi_Bound}, yields \eqref{devi_normal}. 
Applying Lemma \ref{lem::cube_logpi} with $r=2K/Ln\epsilon$ and $E=E_{\D,\mu}$ in conjunction with \eqref{eqn::gen_devi_Bound},  yields \eqref{devi_cube} as desired. 
\end{proof}

\subsection{Optimality of Theorem~\ref{thm::main-dev-bound}}
\begin{lemma}\label{lem::sharp}
The upper bound on the deviation of $\tilde{\theta}_n$ about $E_{\D,\mu}$ given in Theorem~\ref{thm::main-dev-bound} is sharp, up to logarithmic factors.
The upper bound on the sample complexity of $\tilde{\theta}_n$ given in Corollary~\ref{cor::main-sc-cor} is also sharp, up to logarithmic factors. 
\end{lemma}
\begin{proof}
We show that for a specific set of parameters, the upper bound given in Corollary~\ref{cor::main-sc-cor} matches the lower bound of Theorem~6.5 of \citep{Kamath2018} up to logarithmic factors in $d$ and $t^{-1}$. 
Specifically, set $\D=\HD$, $\theta_0\in \cube_{R}(0)$, $\pi=\cube_{R}(0)$, that $\mu=\cN(\theta_0,\sigma^2I)$. 
For $t\geq R$, the sample complexity is $1$. 
We then focus on the case where $t\leq R$. 

We now check the conditions of Corollary~\ref{cor::main-sc-cor} under these parameters. 
First, $\sup_x\D(x,\mu)=\D(\theta_0,\mu)$ and so Condition \ref{cond::phi_um} is satisfied. 
Next, $\HD$ satisfies Condition \ref{cond::phi-k-reg} with $K=1$ and $\VC(\sF)=d+2$, as shown in Lemma \ref*{lem::depth_kf}. 
Furthermore, we have that $\HD(\cdot,\mu)$ is $\sigma^{-1}$-Lipschitz. 
Therefore, the conditions of Corollary~\ref{cor::main-sc-cor} are satisfied. Plugging these values into Corollary~\ref{cor::main-sc-cor} and setting $\gamma=1/3$, as in Theorem~6.5 of \citep{Kamath2018}, yields
\begin{equation}\label{eqn::opt_int}
     n\gtrsim \frac{ d\log\left(\frac{K}{\alpha(t)}\vee e\right)}{\alpha(t)^2}\vee \frac{ d\log\left(\frac{ R/\sigma}{d_{R,\theta_\pi}(\theta_0)\wedge \alpha(t)}\vee d\right)}{\alpha(t)\epsilon} .
\end{equation}
Note that in this case, we have that $\alpha(t)=\Phi(t/\sigma)-1/2.$
Let $a(R)=(\Phi(R/\sigma)-1/2)/R$. 
Now, since $t\leq R$, concavity of $\Phi$ on $\re^+$ implies that $\alpha(t)\geq a(R)t$.
Applying this fact in conjunction with \eqref{eqn::opt_int} results in
\begin{equation}\label{eqn::optimal_dev_bound}
    n\gtrsim \frac{ d\log\left(\frac{1}{a(R)t}\vee e\right)}{a(R)^2\cdot t^2}\vee \frac{ d\log\left(\frac{ R/\sigma}{d_{R,\theta_\pi}(\theta_0)\wedge a(R)t}\vee d\right)}{a(R)\cdot t\cdot\epsilon},
\end{equation}
which matches the bound given in Theorem~6.5 of \citep{Kamath2018} up to logarithmic factors in $d$ and $t^{-1}$. 
A simple manipulation of the terms in \eqref{eqn::optimal_dev_bound} gives an equivalent deviations bound, which matches the minimax lower bounds given by \citet{Cai2019}, up to logarithmic factors, if $\delta$ is taken to be $n^{-k}$ for some $k>0$.  
\end{proof}

\section{Inverse discrepancy function for small $t$}\label{sec::linear}
\kr{As stated earlier, when applying Theorem \ref{thm::main-dev-bound}, if $\alpha^{-1}$ is often at most linear, at least for small $t$, then this is very convenient technically, in that it simplifies the bounds given in Theorem \ref{thm::main-dev-bound}. 
This is because, in that case, we need not be concerned about inverting $\alpha$ directly. 
Indeed, if there exists $C_\mu>0$ such that $\alpha(t)\geq C_\mu t$ for small $t$, then this implies that the $\alpha^{-1}(t)$ in Theorem \ref{thm::main-dev-bound} can be replaced by $t/C_\mu$, thus, simplifying the bound greatly. 
The purpose of this section is to give conditions for there to exist $C_\mu>0$ such that $\alpha(t)\geq C_\mu t$ for small $t$.}
We first give general conditions under which $\alpha(t)\geq C_\mu t$ for an arbitrary depth function.
\begin{lemma}\label{lemm:gen_alpha_bound}
Suppose that $\sup_{x\in\rdd} \D(x,\mu)=\D(\theta_0,\mu)$, $\D(\cdot,\mu)$ is translation invariant, decreasing along rays, continuous and continuously differentiable on $B_r(\theta_0)$ for some $r>0$. 
In addition, suppose that there exists  $0<a<R<r$ such that $u_*=\argmax_{u\in\bS^{d-1}}\D(u\cdot t,\mu)$ is constant in $t$ for $a\leq t\leq R$. 
Then, if for $a\leq t\leq R$, it holds that  $$g^*(t)=u_{*}^\top\left[\frac{d}{dx}\D(x,\mu)\big|_{x=t\cdot u_{*}}\right]$$ is non-decreasing on $[a,R]$, we have that $\alpha(t)\geq t\alpha(R)/R$. 
\end{lemma}

\noindent Note that the proofs of results in this section are deferred to Appendix \ref*{app:tech-proofs}. 
The conditions of Lemma \ref{lemm:gen_alpha_bound} will hold when $\mu$ and $\D$ are such that the contours of the depth function are not changing in shape as we move away from the center. 
For example, the conditions of Lemma \ref{lemm:gen_alpha_bound} hold for the class of elliptical distributions and the integrated depths. 
We give a more general version of this result below, using Lemma \ref{lemm:gen_alpha_bound} to show a general bound for integrated depth functions. 
In addition, recall that $\nu$ denotes the uniform measure on the $(d-1)$-dimensional unit sphere. 
\begin{lemma}\label{lem::alpha_linear_symm}
Suppose $\mu$ is centrally symmetric about $\theta_0$ and $\D(x,\mu)=\int_{\bS^{d-1}} g(x^\top u,\mu_u) d\nu$ where for all $u\in \bS^{d-1}$, we have that
\begin{enumerate}
    \item $g(ay+b,a\mu_u+b)=g(y,\mu_u)$,
    \item $g(\cdot,\mu_u)$ are bounded, continuous and continuously differentiable,
    \item $g(\theta_0^\top u,\mu_u)=0$ and $g(\theta_0^\top u-y,\mu_u)=g(\theta_0^\top u+y,\mu_u)$,
    \item there exists $R>0$ such that $g(\theta_0^\top u+y,\mu_u)$ is non-increasing for $0\leq y\leq R$ and
    \item there exists $v\in \bS^{d-1}$ such that $g(c(\theta_0+v)^\top u,\mu_u)\geq g(c(\theta_0+w)^\top u,\mu_u)$ for all $c>0$ and $w\in \bS^{d-1}$,
\end{enumerate}
then for $t\in[0,R]$ it holds that
\begin{equation*}
    \alpha(t) \geq t\alpha(R) /{R}.
\end{equation*}
\end{lemma}

Observe that Items 1-4 are satisfied for the smoothed integrated dual depth (Definition~\ref{def::sidd}) and the integrated-rank-weighted depth (Definition~\ref{def::irw}) provided the projected cumulative distribution functions $h(y,u)=\mu\left(X^\top u\leq y \right)$ are continuous and continuously differentiable in $y$ for all $u\in\bS^{d-1}$. Item 5 depends on the distribution, but is fairly general. For instance, it is easy to see that item 5 is satisfied by the class of elliptical distributions, or anytime $\mu$ is such that the depth function has nested, convex contours.

In the univariate setting, deviations bounds for the univariate median often only require a lower bound on the density of the population measure in a neighborhood of the median. 
We next show a result of this flavor, for the halfspace depth. 
Recall that a measure $\mu\in\cM_1(\rdd)$ is halfspace symmetric about $\theta_0\in\rdd$ if for every closed halfspace $H$ which contains $\theta_0$, it holds that $\mu(X\in H)\geq 1/2$ \citep{Zuo2000}. 
\begin{lemma}\label{lem::alpha_linear}
Suppose that $\D=\HD$, $\argmax_{x\in\rdd}\HD(x,\mu)=\theta_0$ and suppose that there is some $R>0$ such that for all $u\in\bS^{d-1}$, $\mu_u$ are halfspace symmetric and absolutely continuous. 
If there exists $C_\mu>0$ such that for all $u\in\bS^{d-1}$, $\inf_{\theta_0^\top u\pm R}f_{\mu_u}(x)\geq C_\mu$, then for all $\theta_0^\top u- R\leq t\leq\theta_0^\top u+ R$, we have that $\alpha(t) \geq tC_\mu.$
\end{lemma}
\section{Depth functions and their properties}\label{sec::depth}
For the convenience of the reader, we collect here some basic facts about depth functions. 
We first review several common depth functions in the literature and their properties. We then introduce a new depth function, the smoothed integrated dual depth, which has desirable properties from a computational perspective and is used in our simulations. For proofs of the results from this section, see Appendix~\ref*{sec::depth_proofs}. 

One classical depth function is the simplicial depth \citep{Liu1988, liu1990}. 
Let $\Delta(x_1, \ldots,x_{d+1})$ be the simplex in $\rdd$ with vertices $x_1, \ldots,x_{d+1}$. 
\begin{definition}[Simplicial Depth] \label{def::smd}
Suppose that $Y_1, \ldots,Y_{d+1}$ are i.i.d.\ from $\mu~\in~\cM_1(\rdd)$. The simplicial depth of a point $x\in \rdd$ with respect to $\mu$ is
$$\SMD (x,\mu)= \Pr(\Delta(Y_1, \ldots,Y_{d+1})\ni x).$$
\end{definition}
\noindent 
The simplicial depth of a point $x\in\rdd$ is the probability that a random simplex defined by $d+1$ draws from $\mu$ contains $x$. 
The next depth function we introduce is spatial depth \citep{Vardi2000, Serfling2002}, which is maximized at the spatial median \citep{Vardi2000}. 
Define the spatial sign function some as $\sff(y) =  y/\norm{y}$  (with $\sff(0)=0$). 
We can then define the spatial rank function as $\E{\mu}{\sff(x-X)}$. 
The norm of the spatial rank of $x\in\rdd$ is a measure of the outlyingness of $x$ with respect to $\mu$. 
With this in mind, the spatial depth is defined as follows:
\begin{definition}
The spatial depth $\SD$ of a point $x\in \rdd$ with respect to  $\mu\in\cM_1(\rdd)$ is
$\SD (x,\mu)=1-\norm{\E{\mu}{\sff(x-X)}}.$
\end{definition}
We will see below that the spatial depth is $(K,\sF)$-regular, however, $K=O(d)$.  
Replacing the norm with its square eliminates this inconvenience, and still yields a depth function which is maximized at the spatial median. 
Therefore, we define the modified spatial depth as follows:
\begin{definition}
The modified spatial depth $\MSD$ of a point $x\in \rdd$ with respect to $\mu\in\cM_1(\rdd)$ is
$\MSD (x,\mu)=1-\norm{\E{\mu}{\sff(x-X)}}^2.$
\end{definition}
Lastly, we define the integrated depth functions. 
Here, the idea is to define the depth of a point $x\in\rdd$ as the average, univariate depth of $x^{\top} u$ over all directions $u$. 
The first integrated depth function\footnote{The first integrated multivariate depth function, to be precise. The first integrated depth was developed for functional data by \cite{Fraiman2001}.} is the integrated dual depth of \citet{Cuevas2009}, which, letting $\nu$ be the uniform measure on $\bS^{d-1}$, is defined as follows:
\begin{definition}[Integrated Dual Depth]\label{def::idd} 
The integrated dual depth  of a point $x\in \rdd$ with respect to $\mu\in\cM_1(\rdd)$ is
\begin{equation}\label{eqn::idd}
    \IDD (x,\mu)=\int_{\bS^{d-1}}F(x,u,\mu)\left(1-F(x,u,\mu)\right) d\nu(u).
\end{equation}
\end{definition}
The integrated dual depth of $x\in\rdd$ is the average univariate simplicial depth of $x^{\top} u$ over all directions $u\in \bS^{d-1}$. 
The integrated rank-weighted depth \citep{RAMSAY201951} replaces the simplicial depth in \eqref{eqn::idd} with a slightly modified version of halfspace depth. 
For $X\sim\mu$, define  $F(x-,u,\mu)=\mu(X^\top u< x^\top u).$ 
\begin{definition}[Integrated Rank-Weighted Depth]\label{def::irw} The integrated rank-weighted depth of a point $x\in \rdd$ with respect to $\mu\in\cM_1(\rdd)$ is
$\IRW (x,\mu)=2\E{\nu}{F(x,U,\mu)\wedge (1-F(x,-U,\mu))}.$
\end{definition}

We now turn to a brief discussion of these functions satisfy Definition~\ref{def::depth}. To this end, recall the following classes of measures. 
Recall that a measure $\mu$ is \emph{angularly symmetric} about a point $\theta_0\in\rdd$ if $\frac{X-\theta_0}{\norm{X-\theta_0}}\eqd \frac{\theta_0-X}{\norm{X-\theta_0}}$ for $X\sim \mu$ \citep{liu1990}.
Let $\sN\subset\cM_1(\rdd)$ denote the set of angularly symmetric measures over $\rdd$ that are absolutely continuous with respect to the Lebesgue measure. 
The following proposition collects existing results in the literature: 
\begin{proposition}\label{prop::depth_prop}
The aforementioned depth functions satisfy the following:
\begin{itemize}
    \item Halfspace depth is a depth function with admissible set $\cM_1(\rdd)$. 
    \item Simplicial depth is a depth function with admissible set $\sN$. 
    \item Spatial depth and the modified spatial depth satisfy properties (1), (2) and (4) for all $\mu~\in~\cM_1(\rdd)$. 
    \item Integrated dual depth and integrated rank-weighted depth are depth functions with admissible set $\sC$. Furthermore, they satisfy properties (1), (2) and (4) for any $\mu\in\cM_1(\rdd)$. 
\end{itemize}
\end{proposition}
\begin{remark}\label{rem::spat}
Spatial depth does not satisfy Definition \ref{def::depth}, as property (3) fails. However, spatial depth satisfies spatial angle monotonicity and spatial rank monotonicity \cite[][see Section~4.3]{SerflingDepthFO}, which together provide a weaker alternative to property (3). 
Simplicial depth and halfspace depth are affine invariant, which is a stronger version of property (1). 
\end{remark}
\begin{remark}
Both the halfspace depth and the simplicial depth function have an exact computational running time of $O(n^{d-1})$, see, e.g., \citep{2014arXiv1411.6927D,2015arXiv151204856A} and the references therein. 
By contrast, the spatial depth and the integrated depth functions are computable in high dimensions \citep{Chaudhuri1996,RAMSAY201951}. 
For instance, the integrated depth functions can be approximated easily via sampling, say, $m$ points from $\nu$. 
One can then approximate the sample depth value of a given point in $O(mnd)$ time \citep{RAMSAY201951}. \kr{Going further, it is easy to show via Talagrand's inequality \citep{Talagrand1994,sen2018gentle} that if one wants to approximate a sample depth value to within error $t$, with $>0.99$ probability then we need $m\gtrsim nd\log(1/t)/t^2$ unit vectors, which results in $O(n^2d^2\log(1/t)/t^2)$ time.}
\end{remark}


The following theorem demonstrates that the depth functions defined in this section satisfy the conditions of Theorem~\ref{thm::main_result}. 
Let $\mu_u$ be the law of $X^{\top}u$ if $X\sim \mu$. 
\begin{theorem}\label{thm::depth_cond}
The aforementioned depth functions satisfy the following:
\begin{itemize}
    \item Conditions \ref{cond::phi_um} and \ref{cond::phi-k-reg}  hold for each of the depth functions defined in Section~\ref{sec::depth}. 
    \item Suppose that $\pi$ is absolutely continuous and that for all $u\in \bS^{d-1}$, the measure $\mu_u$ is absolutely continuous with density $f_u$ and $\sup_{\norm{u}=1}\norm{f_u}_\infty=L  <\infty$. 
Then there exists a universal constant $C>0$ such that for all $s\in (0,\infty]$, halfspace, simplicial, integrated rank-weighted and the smoothed integrated dual depth\footnote{The smoothed integrated dual depth is introduced in Section~\ref{sec::sm-idd} below and equals the integrated dual depth when $s=\infty$.} satisfy Condition \ref{cond::phi_uc} with $\omega(r)=C\cdot L\cdot r$. 
\item Suppose that $\mu$ has a bounded density and that $\sup_y\E{}{\norm{y-X}^{-1}}=L'\leq \infty$. Then modified spatial depth satisfies Condition \ref{cond::phi_uc} with $\omega(r)=2\sqrt{d}\cdot L'\cdot r$ and spatial depth satisfies Condition \ref{cond::phi_uc} with $\omega(r)=2\cdot L'\cdot r$.
\end{itemize}
\end{theorem}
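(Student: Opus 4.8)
The plan is to verify the four assertions separately, in each case reducing the relevant property of the $d$-dimensional depth to an elementary one-dimensional fact about the projected laws $\mu_u$ (the law of $X^\top u$), or, for the spatial depths, about $\sup_y\mathbb{E}_\mu\norm{y-X}^{-1}$, using the explicit formulas in Section~\ref{sec::depth}. For \emph{existence of a maximizer}, I would first note that each $x\mapsto\D(x,\mu)$ is bounded above by $1$ and upper semicontinuous: for $\HD$ because $F(\cdot,u,\mu)$ is u.s.c.\ and an infimum of u.s.c.\ functions is u.s.c.; for $\IDD$ and $\IRW$ by dominated convergence in the integral over $\bS^{d-1}$; for $\SMD$ because $\{(x,y_1,\dots,y_{d+1}):x\in\Delta(y_1,\dots,y_{d+1})\}$ is closed, via Fatou; and for $\SD,\MSD$ by continuity of $x\mapsto\mathbb{E}_\mu\sff(x-X)$. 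I would then upgrade the ``vanishing at infinity'' property of Proposition~\ref{prop::depth_prop} to boundedness of the superlevel sets $\{x:\D(x,\mu)\ge c\}$ for every $c>0$: by tightness, choose $M$ with $\mu(B_M(0))>1-c$; for $\norm x>M$ the point $x$ lies in a halfspace disjoint from $B_M(0)$, hence of $\mu$-mass $<c$, which forces $\D(x,\mu)<c$ for $\HD$, $\IDD$, $\IRW$ and $\SMD$, while for the spatial depths $\norm{\mathbb{E}_\mu\sff(x-X)}\to1$ as $\norm x\to\infty$. Upper semicontinuity on a nonempty bounded superlevel set then gives Condition~\ref{cond::phi_um} by Weierstrass.

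For \emph{uniform continuity}, the engine is that under $\sup_{\norm u=1}\norm{f_u}_\infty=L$ one has, for all $x,y$ and all unit $u$, $\abs{F(x,u,\mu)-F(y,u,\mu)}\le L\abs{(x-y)^\top u}\le L\norm{x-y}$ by Cauchy--Schwarz, and likewise for $F(x-,u,\mu)$. Then $\HD$, being an infimum over $u$ of maps with modulus $Lr$, has modulus $Lr$; $\IRW$ uses in addition that $a\mapsto a\wedge(1-a)$ is $1$-Lipschitz before integrating over $\nu$; and $\IDD$ (and its smoothed version, with an extra constant from the mollifier) uses that $a\mapsto a(1-a)$ is $1$-Lipschitz on $[0,1]$, giving moduli $CLr$ for a small absolute $C$. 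For $\SMD$ I would bound $\abs{\SMD(x,\mu)-\SMD(y,\mu)}$ by the probability that exactly one of $x,y$ lies in $\Delta(Y_1,\dots,Y_{d+1})$, hence by $\sum_j\Pr([x,y]\cap H_j\ne\emptyset)$ where $H_j$ is the affine hull of the $d$ vertices other than $Y_j$; conditioning on all but one of those vertices makes the remaining vertex independent of the normal direction of $H_j$, which reduces $\Pr([x,y]\cap H_j\ne\emptyset)$ to a one-dimensional event of probability $\le L\norm{x-y}$. For $\SD$ and $\MSD$ I would use $\norm{\sff(u)-\sff(v)}\le 2\norm{u-v}/(\norm u\vee\norm v)$, so $\norm{\sff(x-X)-\sff(y-X)}\le 2\norm{x-y}/\norm{x-X}$, and then the reverse triangle inequality for $\norm\cdot$ (and, for $\MSD$, the identity $\norm{\mathbb{E}_\mu\sff(x-X)}^2=\mathbb{E}_{\mu\otimes\mu}\langle\sff(x-X),\sff(x-X')\rangle$) to obtain moduli $2L'r$ and $2\sqrt d\,L'r$.

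For \emph{$(K,\sF)$-regularity}, for each depth I would exhibit a fixed class $\sF\subseteq\sB$ and write $\phi(x,\cdot)$ as a fixed-Lipschitz-constant function of evaluations of the measure: of $\mu$ on closed and open halfspaces for $\HD$, $\IRW$ and (smoothed) $\IDD$, where $\abs{a\wedge b-a'\wedge b'}\le\abs{a-a'}+\abs{b-b'}$ and $\abs{a(1-a)-a'(1-a')}\le\abs{a-a'}$ supply the constants $K$ of Table~\ref{table::cond}, with $\VC(\sF)=O(d)$ since halfspaces have VC dimension $O(d)$; of $\mathbb{E}_\mu g$ with $g$ a rescaled coordinate of $z\mapsto\sff(x-z)$ for $\SD$ and $\MSD$, using the reverse triangle inequality for $\norm\cdot$; and of a degree-$(d+1)$ $U$-statistic for $\SMD$, which one reduces to a single-argument class by telescoping $\mu^{\otimes(d+1)}\to\nu^{\otimes(d+1)}$ one coordinate at a time, producing $K=d+1$ and a class of (averaged) simplex indicators whose VC dimension is the classical $O(d^2\log d)$.

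I expect the \emph{uniform continuity of simplicial depth} to be the main obstacle: it is the only depth that is not a pointwise-in-$u$ contraction of the maps $F(x,u,\cdot)$, so the facet-crossing bound must be argued geometrically, and the reduction of $\Pr([x,y]\cap H_j\ne\emptyset)$ to a one-dimensional density estimate hinges on choosing the left-out vertex so that it is independent of the random facet hyperplane; everything else is a routine composition of Lipschitz maps with the one-dimensional estimates on $F$ and on $\sff$.
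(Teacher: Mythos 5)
Your decomposition into the four bullets, and the reduction of each to elementary one-dimensional estimates on the projected c.d.f.'s $F(\cdot,u,\mu)$ (respectively on the spatial sign map $\sff$) together with halfspace-type VC classes, is the paper's strategy, and most of it checks out. There are a few genuine departures. For Condition~\ref{cond::phi_um} you supply the upper-semicontinuity argument the paper elides (its proof invokes only boundedness and compactness, which does not by itself give attainment), so your version is more complete. For $\SD$ and $\MSD$ you replace the paper's derivative computations (it cites Koltchinskii's Proposition~4.1 for $\SD$ and differentiates $\|\cdot\|^2$ for $\MSD$) by the direct estimate $\|\sff(u)-\sff(v)\|\le 2\|u-v\|/(\|u\|\vee\|v\|)$; this is more elementary and self-contained, and the bilinear expansion in fact gives a dimension-free modulus $\propto L'$ for $\MSD$. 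For the $(K,\sF)$-regularity of $\SMD$, your telescoping of $\mu^{\otimes(d+1)}-\nu^{\otimes(d+1)}$ one coordinate at a time essentially re-derives the D\"umbgen inequality the paper cites; to land on a bona fide VC class rather than a class of $[0,1]$-valued averages you need to note that for fixed $x$ and $z_{-k}$ the set $\{z:x\in\Delta(z,z_{-k})\}$ is an intersection of $d$ open halfspaces, and then pull the supremum inside the remaining integrations.

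The genuine gap is exactly where you anticipated it: the Lipschitz modulus of simplicial depth. Your reduction rests on the assertion that ``conditioning on all but one of those vertices makes the remaining vertex independent of the normal direction of $H_j$.'' That is not true: $H_j=\mathrm{aff}(\{Y_i:i\ne j\})$, and after fixing $d-1$ of these vertices the hyperplane still pivots about their $(d-2)$-dimensional affine hull as the last vertex $Y_k$ varies, so the normal $n_j$ is a nontrivial function of $Y_k$. Consequently you cannot rewrite the event $[x,y]\cap H_j\ne\emptyset$ as $\langle n_j,Y_k\rangle$ falling in an interval of length $\lesssim\|x-y\|$ with $n_j$ measurable with respect to the conditioning, and the one-dimensional density bound $L=\sup_u\|f_u\|_\infty$ does not apply to $\langle n_j,Y_k\rangle$. (For what it is worth, the paper's own argument at this step is also incomplete: it asserts that $\overline{X_1X_2}\cap\overline{xy}\ne\emptyset$ forces both $X_1$ and $X_2$ to project into the segment $\overline{xy}$, which fails for $x=(0,0)$, $y=(1,0)$, $X_1=(-1,1)$, $X_2=(2,-1)$, so the inequality $\Pr(\overline{X_1X_2}\text{ meets }\overline{xy})\le\Pr(x_1<X_{11}<y_1)$ is not justified as written.) A correct proof needs a genuine change of variables---for instance, parameterizing $H_j$ by its normal direction and signed offset and showing the offset has conditional density bounded by $L$---and this step deserves a full argument rather than a one-line reduction.
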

\noindent The above results are also summarized in Table \ref{table::cond}.  
\subsection{The smoothed integrated dual depth}\label{sec::sm-idd}
One issue with many of the aforementioned depth functions is that their empirical versions contain indicator functions, which are non-smooth.  
This fact is inconvenient from an optimization perspective, e.g., we cannot apply gradient descent to estimate the median. 
In addition, this non-smoothness implies that the empirical depth functions will possess flat contours. 
This can cause the estimated medians to perform poorly when the population measure has atoms near its center \citep{lalanne2023private}. 
Therefore, to resolve this issue, recall that
$\ind{X_i^\top u\leq x^\top u}\approx \expt\left(s(x-X)^\top u\right),$
for large $s$, where $\expt$ is the usual sigmoid function: $\expt(x)=(1+e^{-x})^{-1}$. 
This motivates the following definition of depth, which we use in our simulation study from Section~\ref{sec::simu}. 
Recall that $\nu$ denotes the uniform measure on the $(d-1)$-dimensional unit sphere. 
\begin{definition}[Smoothed Integrated Dual Depth] \label{def::sidd}
The smoothed integrated dual depth with smoothing parameter $s>0$ of a point $x\in \rdd$ with respect to $\mu\in\cM_1(\rdd)$ is
\begin{equation}\label{eqn::sidd}
    \IDD (x,\mu,s)=\int_{\bS^{d-1}}\E{}{\expt\left(s(x-X)^\top u\right)}\left(1-\E{}{\expt\left(s(x-X)^\top u\right)}\right) d\nu(u).
\end{equation}
\end{definition}
\noindent As $s\rightarrow\infty$, the smoothed integrated dual depth converges to the integrated dual depth. 
However, the smoothed integrated dual depth is convenient in that its empirical version is differentiable. 
Furthermore, the smoothed integrated dual depth is a depth with admissible set $\sC$.
\begin{proposition}\label{prop::idd_beta}
For any $s>0$, the smoothed integrated dual depth is a depth function with admissible set $\sC$. In addition, properties (1) and (4) hold for any $\mu\in \cM_1(\rdd)$. 
\end{proposition}
\noindent 
In the simulations presented in Section~\ref{sec::simu}, we approximate \eqref{eqn::sidd} with 
\begin{equation*}
    \widehat{\IDD} (x,\mu,s)=\frac{1}{M}\sum_{m=1}^M\E{}{\expt\left(s(x-X)^\top u_m\right)}\left(1-\E{}{\expt\left(s(x-X)^\top u_m\right)}\right) ,
\end{equation*}
where $u_1,\ldots, u_M$ are drawn i.i.d.\ from $\nu$. 
One can show that the number of unit vectors needed to maintain a fixed error level $t>0$ with high probability $1-\gamma$ is polynomial in $n$ and $d$. 
\begin{proposition}\label{prop::app_dep}
Let $u_1,\ldots,u_M$ be i.i.d.\ from $\nu$. Then for all $d,s,t>0$ and all $0<\gamma<1$, there exists a universal constant $c_1>0$ such that $\sup_x\left|\widehat{\IDD} (x,\hat{\mu}_n,s)-\IDD(x,\hat{\mu}_n,s)\right|\leq t,$
with probability $1-\gamma$, provided that 
$$M\geq c_1[\log(1/\gamma)\vee dn\log( {1}/{t}\vee e)]t^{-2}.$$
\end{proposition}
\kr{Observe that Proposition~\ref{prop::app_dep} is a sample complexity style result, where instead of bounding the number of samples needed for achieve some error tolerance $t>0$, we bound the number of simulated unit vectors needed to achieve error tolerance $t>0$. Specifically, the result says that we need, roughly, $O(nd/t^2)$ unit vectors for our approximation to be within $t$ of $\IDD(x,\hat{\mu}_n,s)$, with probability $>0.99$.}
Lastly, note that the median estimators are relatively insensitive to the choice of $s$. 
For $s\geq 10$, the specific value of $s$ appears to have minimal effect on the estimation error and convergence of the gradient descent algorithm, see Appendix \ref*{sec::s_param} for more details. We recommend choosing $s=100$. 
\section{Computing private depth values}\label{sec::computing-depths}
The depth values themselves can be of interest. 
For example, they are used in visualization and a host of inference procedures \citep{Li2004, Chenouri2011}. 
The theoretical analysis from the previous sections yields simple differentially private methods for estimating depth values. 
If we want to compute the depth of a known point, then we can use an additive noise mechanism, such as the Laplace mechanism or the Gaussian mechanism \citep{Dwork2006}, i.e., we can just add noise that is calibrated to ensure differential privacy. 
\kr{For example, it follows from privacy of the Laplace mechanism \citep{Dwork2006} that if $\D$ is $(K,\cF)$-regular, then for all $x\in\rdd$ it holds that
$\tilde\D(x,\hat{\mu}_n)=\D(x,\hat{\mu}_n)+W_1K/n\epsilon,$
$W_1\sim\lap(1)$, is $\epsilon$-differentially private.}
Theorem~\ref{thm::main_result} then implies the concentration of private depth values generated from the Laplace mechanism.
\begin{corollary}\label{thm::gshs}
Suppose that $\D$ satisfies Condition \ref{cond::phi-k-reg}. 
For $x\in \rdd$ chosen independently of the data, 
$\widetilde{\D}(x,\hat{\mu}_n)=\D(x,\hat{\mu}_n)+W_1k/n\epsilon,$
is $\epsilon$-differentially private. 
In addition, there exists universal constants $c_1,c_2>0$ such that for all $t\geq 0$ it holds that
$$\Pr(|\widetilde{\D}(x,\hat{\mu}_n)-\D(x,\mu)|>t)\leq \left({c_1n}/{\VC(\sF)}\right)^{ \VC(\sF)}e^{-c_2n t^2/K^2}+e^{-n\epsilon t/2K}.$$
\end{corollary}
\noindent Corollary~\ref{thm::gshs} suggests taking $\epsilon\propto t/K$ when simulating private depth values. 
An obvious problem of interest is estimating the vector of depth values at the sample points, i.e., can we estimate 
  $\widehat{\mathbf{D}}(\hat{\mu}_n)=\left( \D(X_1,\hat{\mu}_n),\ldots,\D(X_n,\hat{\mu}_n)\right)$
privately? 
\kr{Define the global 1-sensitivity of $\widehat{\mathbf{D}}(\hat{\mu}_n)$ to be
$$\GS_{n}(\widehat{\mathbf{D}}(\hat{\mu}_n))=\sup_{\substack{\sam{X}{n}\in \mathbf{D}_{n\times d},\\ \tilde{\mu}_n\in \widetilde{\cM}(\sam{X}{n})}}\norm{\widehat{\mathbf{D}}(\hat{\mu}_n)-\widehat{\mathbf{D}}(\tilde{\mu}_n)}_1,$$
where $\norm{\cdot}_1$ denotes the $L_1$ norm. 
In general, $\GS_n(\widehat{\mathbf{D}}(\hat{\mu}_n))=O(1)$, see Lemma \ref*{lem::gs_o1}. 
As a result, if $\widetilde{\mathbf{D}}$ is the vector of private depth values generated from the Laplace mechanism then, $||\widetilde{\mathbf{D}}(\hat{\mu}_n)-\widehat{\mathbf{D}}(\hat\mu_n)||\gtrsim n/\epsilon$ with high probability; the level of noise is greater than that of the sampling error. Here, $\norm{\cdot}$ denotes the $L_2$ norm.}
This result is intuitive; these vectors reveal more information about the population as $n$ grows, which differs markedly from the single depth value case, where the amount of information received is fixed in $n$. 
In fact, for large $n$ the vector of depth values at the sample points contains a significant amount of information about $\mu$; the depth function can, under certain conditions, characterize the distribution of the input measure $\mu$ \citep[see][and the references therein]{Struyf1999, Nagy2018}. 
In order to release so much information about the population privately, we must inject non-negligible noise. 
We cannot, then, simply plug in the $n$ private sample depth values into an inference procedure and proceed. 
An interesting topic for future research is how to extend depth-based inference procedures to the private setting. 




\acks{The authors acknowledge Gautam Kamath for his helpful comments and discussion, which improved the paper. The authors acknowledge the support of the Natural Sciences and Engineering Research Council of Canada (NSERC). Cette recherche a \'et\'e financ\'ee par le Conseil de recherches en sciences naturelles et en g\'enie du Canada (CRSNG),  [DGECR-2023-00311, DGECR-2020-00199].}


\bibliography{main}

\newpage

\appendix
\section{Notation conventions}\label{app::notation}
\kr{We collect here some notation we will use throughout the paper. 
For sets $A,B$, we let $A \triangle B$ be the symmetric difference between $A$ and $B$. 
Given $v\in\rdd$, let $||v||$ denote the Euclidean norm, and for a function $f\colon \re\to\re$, let $||f||=\sup_{x\in\re} |f(x)|$.
For a set $A$ and $r>0$, let $B_r(A)=\{x\colon \min\limits_{y\in A}\norm{x-y}\leq r\}$. 
Let $\cube_{R}(y)$ be the $d$-dimensional cube of side-length $R$ centered at $y$ and let $d_{R,y}(x)$ denote the minimum distance from a point $x=(x_1,\,\dots,\,x_d)$ to a face of the cube $\cube_{R}(y)$: $d_{R,y}(x)=\min\limits_{1\leq i\leq d}|x_i-y_i\pm R/2|$.
Similarly, denote the minimum distance from a set $B\subset\rdd$ to a face of the cube $\cube_{R}(y)$ by $d_{R,y}(B)=\inf\limits_{x\in B}d_{R,y}(x)$ and let $d(x,B)=\inf\limits_{y\in B}\norm{x-y}$ denote the usual point-to-set distance. 
For two numbers $a,b\in\re$, we have that $a\wedge b=\min(a,b)$ and $a\vee b=\max(a,b)$ and we write $a\lesssim b$ ($a\gtrsim b$) if $a\leq C b$ ($a\geq C b$) for some universal constant $C>0$. 

For a space $S$, $\cM_1(S)$ denotes the space of probability measures on $S$. 
We say that a dataset of size $n\times d$ is a collection of $n$ points in $\rdd$, $\mathbb{X}_n=(X_\ell)_{\ell=1}^n$, with repetitions allowed. 
We assume that we have observed a dataset of size $n\times d$, which are realizations of $n$, independent and identically distributed random variables from a population measure $\mu\in \cM_1(\rdd)$. The empirical measure of these observations is denoted by $\hmu_n$. 
We let $\sB$ denote the space of Borel functions from $\rdd$ to $[0,1]$. For a family of functions, $\sF\subseteq\sB$ let $\VC(\sF)$ denote the Vapnik--Chervonenkis dimension of $\sF$. We sometimes use the following pseudometric on $\cM_1(\rdd)$,  $d_\sF(\mu,\nu) = \sup_{g\in\sF}|\int gd(\mu-\nu)|$, where $\mu,\nu\in\cM_1(\rdd)$. 

Next, we let $\cN(\mu,\Sigma)$ denote the Gaussian measure with mean $\mu$ and covariance matrix $\Sigma$. 
We let $\bS^{d-1}$ denote the $(d-1)$-dimensional unit sphere and we let $\nu$ be the uniform measure on $\bS^{d-1}$, where the dimension is clear from the context. 
We write $X\sim\mu$ if the random vector $X$ is drawn from $\mu\in \cM_1(\rdd)$. Furthermore, we let $\mu_u$ denote the law of $X^\top u$ for $u\in \bS^{d-1}$ and $f_u$ be the associated density function (when it exists). 
For two random vectors $X,Y$, we write $X\eqd Y$ if $X$ is equal in distribution to $Y$. 
For a random vector $X\sim \mu$, we write its expectation as $\E{\mu}{X}=\int Xd\mu$ and omit the measure, $\mu$, when it is clear from the context. 
Recall that a measure $\mu\in\cM_1(\rdd)$ is centrally symmetric about a point $x\in\rdd$ if $X-x\eqd x-X$ for $X\sim \mu$ and a measure $\mu$ is angularly symmetric about a point $\theta_0\in\rdd$ if $\frac{X-\theta_0}{\norm{X-\theta_0}}\eqd \frac{\theta_0-X}{\norm{X-\theta_0}}$ for $X\sim \mu$ \citep{liu1990}.
We let $\sC\subset\cM_1(\rdd)$ denote the set of centrally symmetric measures over $\rdd$ and we let $\sN\subset\cM_1(\rdd)$ denote the set of angularly symmetric measures over $\rdd$ that are absolutely continuous with respect to the Lebesgue measure.} 
\section{Technical proofs}\label{app:tech-proofs}
\begin{proof}[Proof of Corollary \ref*{cor::main-sc-cor}]
First note that by assumption, all of the conditions of Lemma \ref*{lem::gen_sc_bnd} are satisfied. 
To prove \eqrefplain{sc_normal} and \eqrefplain{sc_cube} we apply Lemma \ref*{lem::norm_logpi} and Lemma \ref*{lem::cube_logpi}, respectively, in conjunction with Lemma \ref*{lem::gen_sc_bnd}. 
To apply Lemma \ref*{lem::norm_logpi}, first, observe that together, the assumptions that $L>1$, $\sigma_\pi^2\geq 1/16$ and $\D(x;\mu)\leq 1$ imply that $\alpha(t)/4L\sigma_\pi\leq 1$. 
Applying Lemma \ref*{lem::norm_logpi} with $R=\alpha(t)/4L$ and $E=E_{\D,\mu}$ in conjunction with \eqref{eqn::gen_sc_bnd} yields that there exists a universal constant $c>0$ such that for all $t\geq 0$, all $d> 2$ and all $0<\gamma<1$, we have that $d(E_{\D,\mu},\tilde\theta_n)<t$ with probability at least $1-\gamma$ provided that
{\small
$$n\gtrsim K^2\left[\frac{\log(1/\gamma)\vee (\VC(\sF)\log(\frac{K}{c\alpha(t)})\vee 1)}{\alpha(t)^2}\right] \bigvee \left[   K \frac{\log\left(1/\gamma\right)\vee \frac{d(E_{\D,\mu},\theta_\pi)^2}{\sigma^2_\pi} \vee d\log\left(\frac{L\sigma_\pi }{\alpha(t)}\vee  d  \right)}{\epsilon\alpha(t)}\right].$$}
To show \eqrefplain{sc_cube}, we apply Lemma \ref*{lem::cube_logpi} with $r=\alpha(t)/4L$ and $E=E_{\D,\mu}$ in conjunction with \eqref{eqn::gen_sc_bnd}, which yields that there exists a universal constant $c_1>0$ such that for all $t\geq 0$, all $d\geq 1$ and all $0<\gamma<1$, we have that $d(E_{\D,\mu},\tilde\theta_n)<t$ with probability at least $1-\gamma$ provided that
{\small
\begin{align*}
n\gtrsim K^2\left[\frac{\log(1/\gamma)\vee (\VC(\sF)\log(\frac{K}{c\alpha(t)})\vee 1)}{\alpha(t)^2}\right] \bigvee \left[  K\frac{\log\left(1/\gamma\right)\vee d\log\left(\frac{R}{d_{R,\theta_\pi}(E_{\D,\mu})\wedge \alpha(t)/L}\right)}{\alpha(t)\epsilon} \right]. &\qedhere
\end{align*}}
\end{proof}
\begin{proof}[Proof of Lemma~\ref*{lemm:gen_alpha_bound}]
Without loss of generality, assume that $\theta_0=0$. 
It suffices to show that $\alpha(t)$ is concave on $[a,R]$. 
This holds if and only if $g(t)=\sup_{\norm{x}\geq t}\D(x,\mu)$ is convex on $[0,R]$. 
By the decreasing along rays property and the fact that $\D$ is maximized at $0$, it holds that $g(t)=\sup_{\norm{u}=1}\D(t\cdot u,\mu)$. 
Next, let $u_{*,t}=\argmax_{\norm{u}=1}\D(t\cdot u,\mu)$. 
Observe that by assumption $u_{*,t}=u_{*}$, i.e., $u_{*,t}$ is constant in $t$. 
Therefore, $\sup_{\norm{u}=1}\D(t\cdot u,\mu)=\D(t\cdot u_{*},\mu)$. 
Next, it holds that 
\begin{equation*}
    \frac{d}{dt}g(t)=\frac{d}{dt}\D(t\cdot u_{*},\mu)=u_{*}^\top\left[\frac{d}{dx}\D(x,\mu)\big|_{x=t\cdot u_{*}}\right],
\end{equation*}
which is non-decreasing by assumption. 
This implies that $g(t)$ is convex on $[a,R]$. 
It follows that $\alpha(t)\geq t\alpha(R)/R.$
\end{proof}
\begin{proof}[Proof of Lemma~\ref*{lem::alpha_linear_symm}]
Again, without loss of generality, take $\theta_0=0$. 
We have that 
\begin{align*}
        \frac{d}{dx}\int_{\bS^{d-1}}g(x^\top u,\mu_u)d\nu&= \int_{\bS^{d-1}}\frac{d}{dx}g(x^\top u,\mu_u)d\nu= \int_{\bS^{d-1}}g^{(1)}(x^\top u,\mu_u)\ u\ d\nu.
\end{align*}
Now, 
\begin{align*}
    u_{*}^\top\left[\frac{d}{dx}\D(x,\mu)\big|_{x=t\cdot u_{*}}\right]&=\int_{\bS^{d-1}}u_{*}^\top u g^{(1)}(t\cdot u_{*}^\top u,\mu_u)d\nu=2\int_{\bS^{d-1}}|u_{*}^\top u| g^{(1)}(t\cdot |u_{*}^\top u|,\mu_u)d\nu,
\end{align*}
which is non-decreasing by the assumptions on $g$. 
Now, assumptions 1--5 and the preceding observation imply the conditions of Lemma \ref*{lemm:gen_alpha_bound} are satisfied, and the result follows. 
\end{proof}
\begin{proof}[Proof of Lemma~\ref*{lem::alpha_linear}]
Given that $\HD$ is translation invariant, there is no loss of generality in assuming that $\theta_0=0$. 
First, by definition and the decreasing along rays property of $\HD$, we have that 
\begin{equation}\label{eqn::alpha_dfn2}
        \alpha(t) = \HD(0,\mu) - \sup_{\norm{x}\geq  t} \HD(x,\mu)=1/2 - \sup_{\norm{x}= t} \inf_{\norm{u}=1}F(x,u,\mu).
\end{equation}
Next, we bound the term $\sup_{\norm{x}= t} \inf_{\norm{u}=1}F(x,u,\mu)$ above. 
To this end, we have that 
\begin{equation*}
        \sup_{\norm{x}=t} \inf_{\norm{u}=1}F(x,u,\mu)\leq \sup_{\norm{u}=1}\left(\frac{1}{2}-\left|\frac{1}{2}-h(t,u)\right|\right).
\end{equation*}
This inequality, in combination with \eqref{eqn::alpha_dfn2} and the mean value theorem yields that for all $0\leq t\leq R$, it holds
\begin{equation*}
        \alpha(t) \geq \left|\frac{1}{2}-h(t,u)\right|\geq tC_\mu. \qedhere
\end{equation*}
\end{proof}
\begin{proof}[Proof of Lemma \ref*{lem::deriv}]
First, we use the derivative test to determine when the function $h(x)$ is increasing. 
Doing so yields that $h(x)$ is increasing for $x\geq b/a$. 
Now, it remains to find $x\geq b/a$ such that $h(x)\geq 0$. 
In other words, for $y_r=r b/a,$ we want to find $r\geq 1$ such that $h(y_r)\geq 0$. 
By definition,
\begin{align*}
    h(y_r)&=b\left(r-\log \left(cr/a\right)\right).
\end{align*}
It follows that $h(y_r)\geq 0$ is satisfied for $r$ such that $r-\log r\geq \log (c/a).$ 
Applying the inequality $x-\log x\geq x/2$ for $x\geq 1$ yields that $r-\log r\geq \log (c/a)$ is satisfied when 
\begin{align*}
  r\geq 2\log \left(c/a\right).
\end{align*}
Therefore, $h(x)$ is positive and increasing for
\begin{align*}
x\geq 2\frac{b}{a}\log \left(\frac{c}{a}\vee \sqrt{e}\right).
\end{align*}
\end{proof}
\begin{proof}[Proof of Lemma~\ref*{lem::norm_logpi}]
Let $R_*=R^2/\sigma^2_\pi$, $\theta_0\in E$ and let $W\sim \chi^2_d(\norm{\theta_0-\theta_\pi}^2/\sigma_\pi^2)$. 
We can lower bound $\pi(B_{R}(E))$ as follows:
\begin{equation}\label{eqn::sc_gp_1}
   \pi(B_{R}(E))\geq \pi(\norm{X-\theta_0}\leq R)=\Prr{W\leq R_*}.  
\end{equation}
Let $G(x,k,\lambda)$ be the cumulative distribution function for the non-central chi-squared measure with $k$ degrees of freedom and non-centrality parameter $\lambda$. 
Recall that for all $x,k,\lambda>0$, it holds that
\begin{align*}
    G(x,k,\lambda)&= e^{-\lambda^2}\sum_{j=0}^\infty \frac{(\lambda/2)^j}{j!} G(x,k+2j)\geq  e^{-\lambda^2} G(x,k,0).
\end{align*}
Therefore, 
\begin{equation}\label{eqn::sc_gp_2}
    \Prr{W\leq R_*}\geq e^{-\norm{\theta_0-\theta_\pi}^2/2\sigma_\pi^2}G(R_*,d,0). 
\end{equation}
We now lower bound $G(x,d,0)$ for $x\leq 1$. Let $\gamma$ be the lower incomplete gamma function. 
It follows from the properties of the gamma function that
\begin{align*}
    -\log G(x,d,0)&= -\log \gamma(x/2,d/2)+
    \log\Gamma(d/2)\lesssim -\log \gamma(x/2,d/2)+d\log d.
\end{align*}
Note that the assumption $d>2$ implies that $\sup_y \gamma(y,d/2)\geq 1$. 
This implies that $y^{d/2-1}e^{-y/2}$ is increasing on $(0,x)$. 
Thus, for any $0<r<1$ it holds that
\begin{align*}
     -\log G(x,d,0)&\lesssim -\log \int_{0}^{x}(y/2)^{d/2-1}e^{-y/2}/2\ dy
  +d\log d\\
    &\lesssim-\log (x-r)(r/2)^{d/2-1}/2+r/2+d\log d.
\end{align*}
Setting $r=x/2$ and using the fact that $x\leq 1$ results in
\begin{align}\label{eqn::chi_lb}
    -\log G(x,d,0)&\lesssim \frac{x}{4}+ \frac{d}{2}\log\left(\frac{4}{x}\right)+d\log d\lesssim  d\log\left(\frac{4}{x}\vee d\right),
\end{align}
Note that $R_*^2=R^2/\sigma^2_\pi\leq 1$ by assumption. 
Thus, applying \eqref{eqn::chi_lb} results in
\begin{equation}\label{eqn::chi_bnd}
 -\log G(R,d,0)\lesssim d\log\left(\frac{\sigma_\pi}{R}\vee d\right).
\end{equation}
Combining \eqref{eqn::sc_gp_1}, \eqref{eqn::sc_gp_2} and \eqref{eqn::chi_bnd} implies that
\begin{align*}
 -\log \pi(B_{R}(E))&\lesssim \frac{\norm{\theta_0-\theta_\pi}^2}{\sigma_\pi^2}+ d\log\left(\frac{\sigma_\pi}{R}\vee d\right).
\end{align*}
Now, note that this bound holds for all $\theta_0\in E$, therefore, it holds that 
\begin{align*}
 -\log \pi(B_{R}(E))&\lesssim \frac{d(E,\theta_\pi)^2}{\sigma_\pi^2}+ d\log\left(\frac{\sigma_\pi}{R}\vee d\right). \qedhere
\end{align*}
\end{proof}
\begin{proof}[Proof of Lemma~\ref*{lem::cube_logpi}]
First, note that for any ball $B_{r}(x)$ such that $B_{r}(x)\subset \cube_{R}(\theta_\pi)$, we have that 
\begin{align*}
    \pi(B_{r}(x))= \frac{r^{d}\pi^{d/2}}{R^d\Gamma(d/2+1)}\geq  \left(\frac{r}{R}\right)^{d}\left( d/2+1\right)^{d/2+1}\pi^{d/2} e^{-d/2}\geq \left(\frac{r}{R}\right)^{d}\left(\frac{\pi d}{2e}\right)^{d/2}.
\end{align*}
Taking the negated logarithm of both sides implies that 
\begin{equation}\label{eqn::subset_bound}
-\log \pi(B_{r}(x))\lesssim  d\log\left(\frac{R}{r}\right)-d\log\pi d/2+d\lesssim d\log\left(\frac{R}{r}\right).
\end{equation}

If there exists $\theta_0\in  E$ such that $B_{r}(\theta_0)\subset\cube_{R}(\theta_\pi)$ then \eqref{eqn::subset_bound} implies that 
\begin{equation}\label{eqn::case1}
-\log \pi(B_{r}(\theta_0))\lesssim d\log\left(\frac{R}{r}\right).
\end{equation}
If instead there exists $\theta_0\in  E$ such that $\cube_{R}(\theta_\pi)\subset B_{r}(\theta_0)$ then
\begin{equation}\label{eqn::case2}
-\log \pi(B_{r}(\theta_0))=0.
\end{equation}
Suppose finally that for all $\theta_0\in E$ it holds that both $B_{r}(\theta_0)\not\subset\cube_{R}(\theta_\pi)$ and $\cube_{R}(\theta_\pi)\not\subset B_{r}(\theta_0)$. 
Let $\theta_{0,j}$ and $\theta_{p,j}$ be the $j^{th}$ coordinate of $\theta_0$ and $\theta_\pi$ respectively. 
where
\begin{align*}
    r=\min_{1\leq j\leq d}\left(|\theta_{0,j}-\theta_{p,j}-R/2| \wedge |\theta_{0,j}-\theta_{p,j}+R/2|\right)=d_{R,\theta_\pi}(\theta_0).
\end{align*}
Now, by assumption, there exists $\theta_0\in  E$ such that $\theta_0\in \cube_{R}$. 
For such $\theta_0\in \cube_{R}$, the definition of $d_{R,\theta_\pi}$ immediately implies that $B_{d_{R,\theta_\pi}(\theta_0)}(\theta_0)\subset \cube_{R}(\theta_\pi)$. 
Therefore, \eqref{eqn::subset_bound} implies that
\begin{equation}\label{eqn::case3}
-\log \pi(B_{r}(E))\leq -\log \pi(B_{r}(\theta_0)) \lesssim d\log\left(\frac{R}{d_{R,\theta_\pi}(\theta_0)}\right).
\end{equation}
It follows from \eqref{eqn::case1}, \eqref{eqn::case2} and \eqref{eqn::case3} that for all $d> 0$ and all $r$ it holds that 
\begin{equation}\label{eqn::any}
-\log \pi(B_{r}(E))\lesssim  d\log\left(\frac{R}{d_{R,\theta_\pi}(\theta_0)\wedge r}\right).
\end{equation}
Note that \eqref{eqn::any} holds for all $\theta_0\in E$, thus, for all $d> 0$ and all $r>0$ it holds that 
\begin{equation*}
-\log \pi(B_{r}(E))\lesssim d\log\left(\frac{R}{d_{R,\theta_\pi}(E)\wedge r}\right). \qedhere
\end{equation*}
\end{proof}
\begin{lemma}\label{lem::gen_sc_bnd}
Suppose that Conditions \ref*{cond::phi_um} and \ref*{cond::phi-k-reg} hold with $\phi=D$ for some $K>0$. 
In addition, suppose that the map $x\mapsto \D(x,\mu)$ is $L$-Lipschitz for some $L>0$.  
There is a universal $c>0$ such that for all $t\geq 0$, all $d\geq 1$ and all $0<\gamma<1$, we have that $\norm{\tilde\theta_n-E_{\D,\mu}}<t$ with probability at least $1-\gamma$ provided that
\begin{equation}\label{eqn::gen_sc_bnd}
    n\gtrsim \left[K^2\frac{\log(1/\gamma)\vee (\VC(\sF)\log(\frac{K}{c\alpha(t)})\vee 1)}{\alpha(t)^2}\right]\bigvee \left[    K\frac{\log\left(1/\gamma\right)\vee -\log\pi(B_{\alpha(t)/4L}(E_{\D,\mu}))}{\epsilon\alpha(t)}\right]. 
\end{equation}
\end{lemma}
\begin{proof}
The proof is similar to that of Lemma~\ref{lem::gen_devi_Bound}. 
Note that all of the conditions of Theorem~\ref*{thm::main_result} hold, and so we can apply Theorem~\ref*{thm::main_result} with $\beta=n\epsilon/2K$. 
Proving the sample complexity bound then amounts to finding $n$ which ensures that $I+II\leq \gamma$, where $I,II$ are defined in the proof of Lemma~\ref{lem::gen_devi_Bound}. 
Now, following the same argument in the proof of Lemma~\ref{lem::gen_devi_Bound}, see the arguments between \eqref{eqn::gen_devi_step00} and \eqref{eqn::gen_devi_bnd22}, we have that $I\leq \gamma/2$ whenever
\begin{equation}\label{eqn::sc_step111}
  n\gtrsim \frac{K^2\log(2/\gamma)}{\alpha(t)^2} \bigvee K^2\frac{\log(1/\gamma)\vee \VC(\sF)\log(\frac{K}{c\alpha(t)})\vee 1}{\alpha(t)^2}.
\end{equation}
Furthermore, following \eqref{eqn::last_gen_devi}, $II\leq \gamma/2$ when
\begin{equation}\label{eqn::sc_step112}
     n>4K\frac{\log\left(2/\gamma\right)\vee\psi(n\epsilon/2K)}{\epsilon\alpha(t)},
\end{equation}
from which it then remains to simplify the bound 
\begin{align}\label{eqn::goal0}
  \psi(n\epsilon/2K)\leq   n\epsilon\alpha(t)/4K.
\end{align}
Given that by assumption $\D$ is $L$-Lipschitz, we have that $\omega(r)\leq L r$. Plugging this in to the definition of $\psi$ from \eqrefplain{eqn::calibration-func}, we have that
$$\psi(n\epsilon/2K)\leq \min_{r>0}\left\{\frac{n\epsilon}{2K}\cdot L\cdot r-\log\pi(B_{r}(E_{\phi,\mu}))\right\}.$$
By taking the specific choice $r=\alpha(t)/4L$, we see that a sufficient condition for \eqref{eqn::goal0} to hold is
\begin{equation}
        \frac{n\epsilon\alpha(t)}{8K}-\log\pi(B_{\alpha(t)/4L}(E_{\D,\mu}))\leq      \frac{n\epsilon\alpha(t)}{4K},
\end{equation}
which is equivalent to
\begin{equation}\label{eqn::goal2}
n>\frac{-4K\log\pi(B_{\alpha(t)/4L}(E_{\D,\mu}))}{\epsilon\alpha(t)}.
\end{equation}
Combining \eqref{eqn::sc_step111}, \eqref{eqn::sc_step112} and \eqref{eqn::goal2} yields the desired result. 
\end{proof}
\section{On the smoothing parameter in the smoothed integrated dual depth}\label{sec::s_param}
We executed a small simulation study to assess the effect of the smoothing parameter $s$ on the non-private estimation error of the smoothed integrated dual depth median, as well as the effect of $s$ on the convergence of the gradient descent algorithm. 
The simulation set up was the same as described in Section \ref*{sec::simu}. Figure \ref*{fig:s-ERMSE} shows the empirical root-mean-square error for different values of $s$ and the dimension $d$. We see that the choice of $s$ does not particularly depend on the dimension of the data. Furthermore, we see that choosing $s\geq 10$ produces the best results in terms balancing convergence and estimation error. 
Given that larger $s$ theoretically gives a closer approximation to the integrated dual depth, we recommend choosing $s=100$ in practice. 
\begin{figure}[t]
\centering
\begin{minipage}[c]{1.86in}
    \includegraphics[width=1.86in]{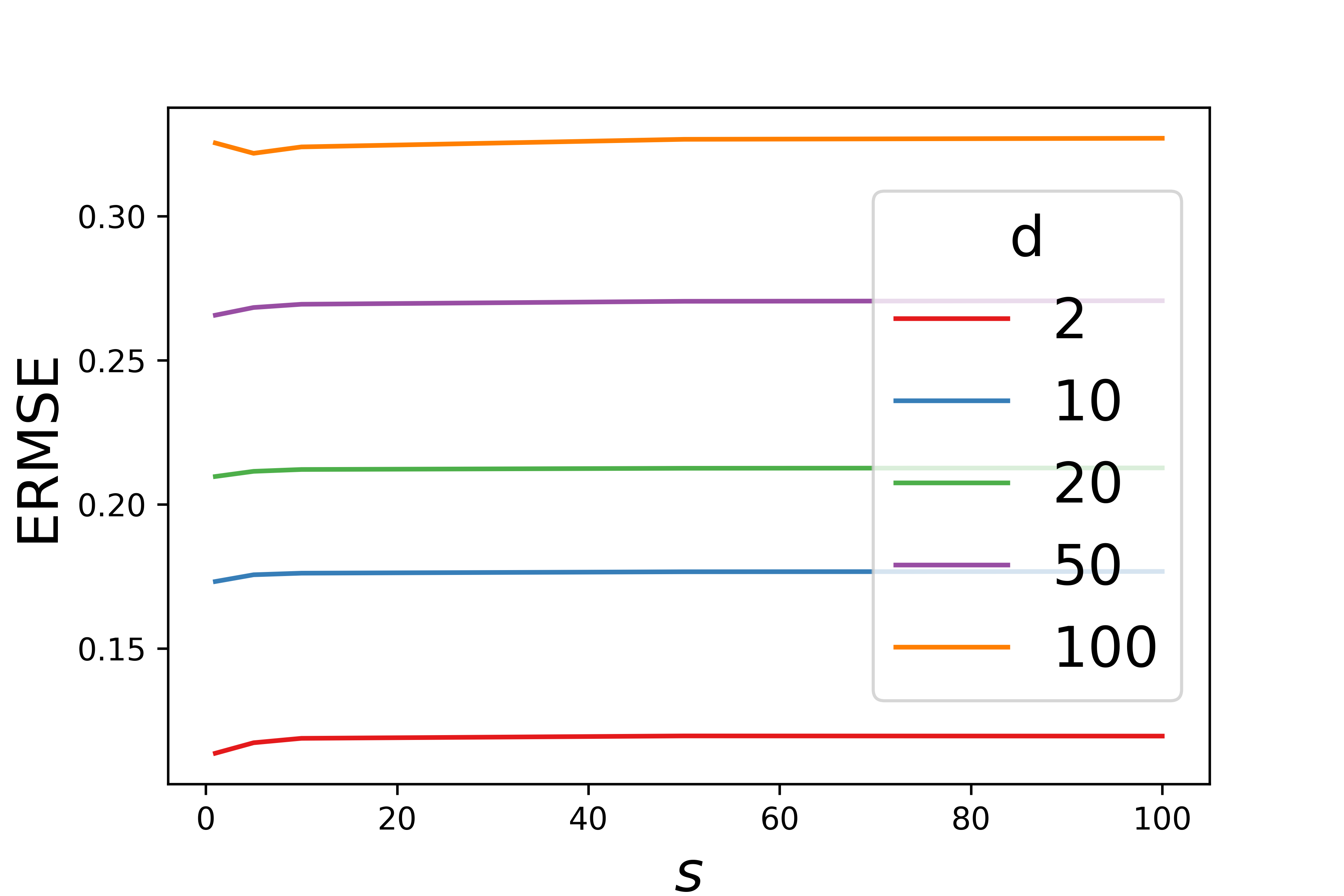}
\end{minipage}
\begin{minipage}[c]{1.86in}
    \includegraphics[width=1.86in]{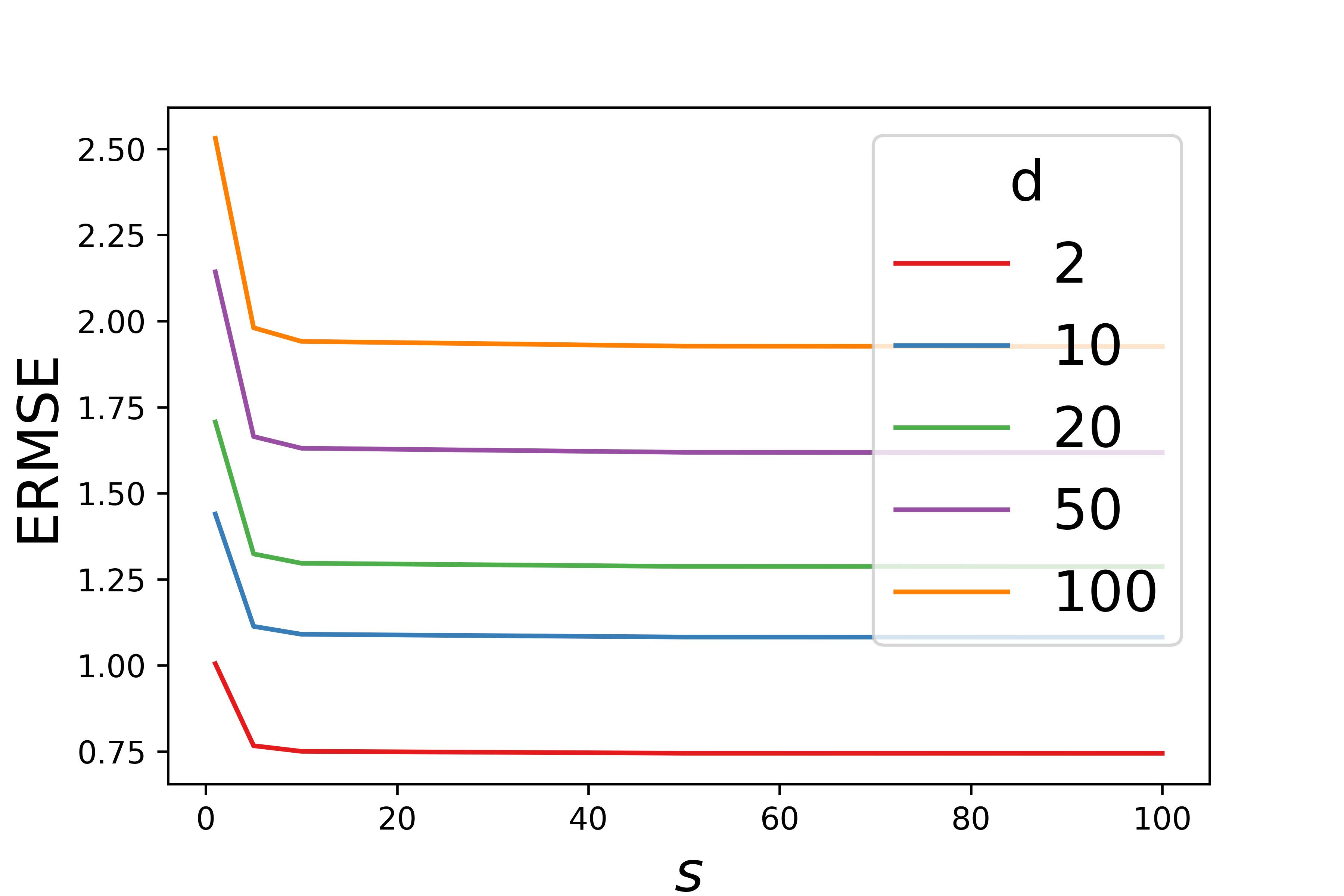}
\end{minipage}
\begin{minipage}[c]{1.86in}
\includegraphics[width=1.86in]{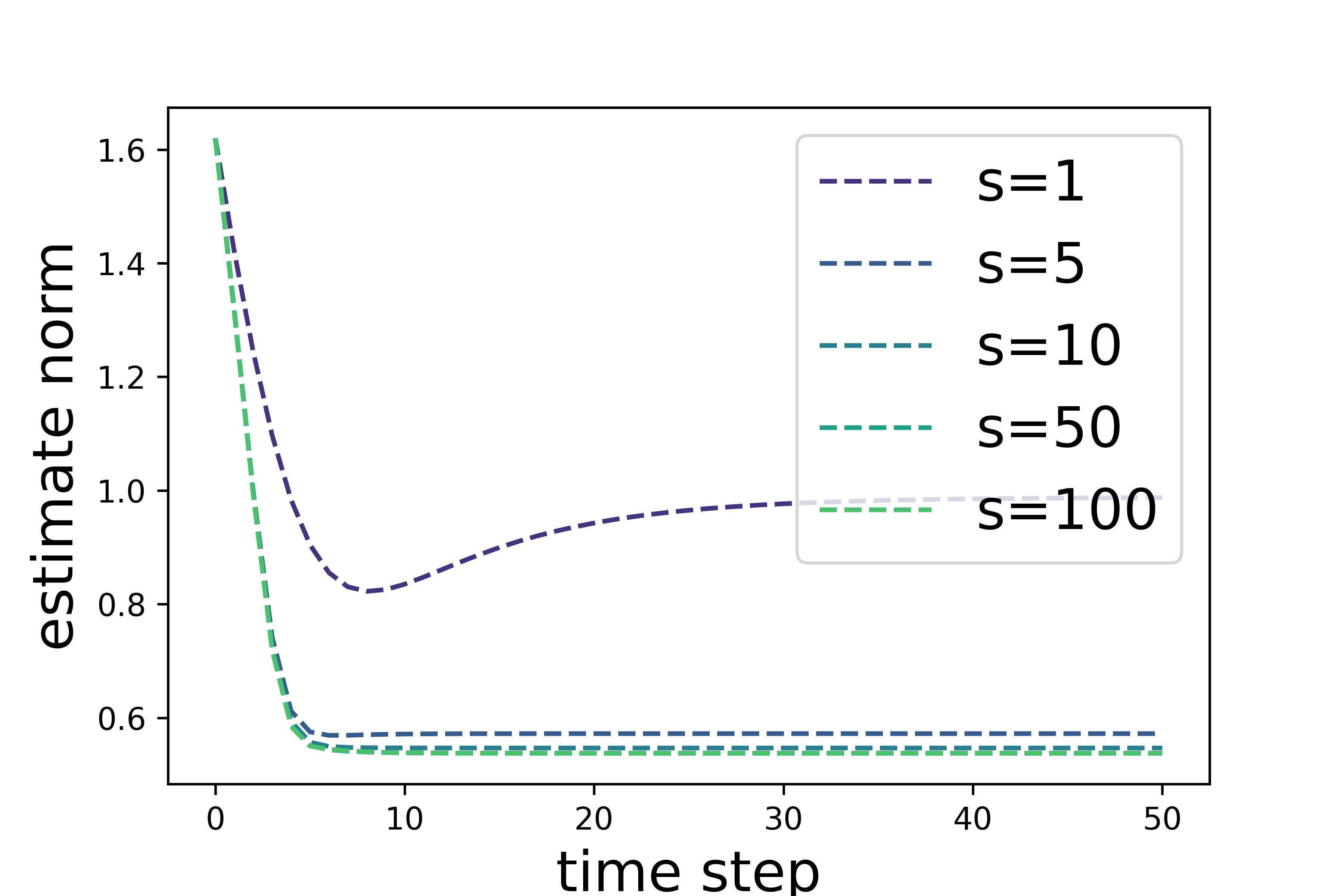}
\end{minipage}
    \caption{ERMSE of the non-private smoothed integrated dual depth median for different values of $s$ under Gaussian data (left) and contaminated Gaussian data (center) data. Convergence of one simulation run of the gradient descent algorithm in dimension $2$ is also presented (right). It can be seen that choosing $s$ large gives the lowest error and best convergence, though any value $s\geq 10$ produces similar results.}
    \label{fig:s-ERMSE}
\end{figure}
\section{Extra plot from simulations}
\begin{figure}[t]
    \centering
    \includegraphics[width=0.5\textwidth]{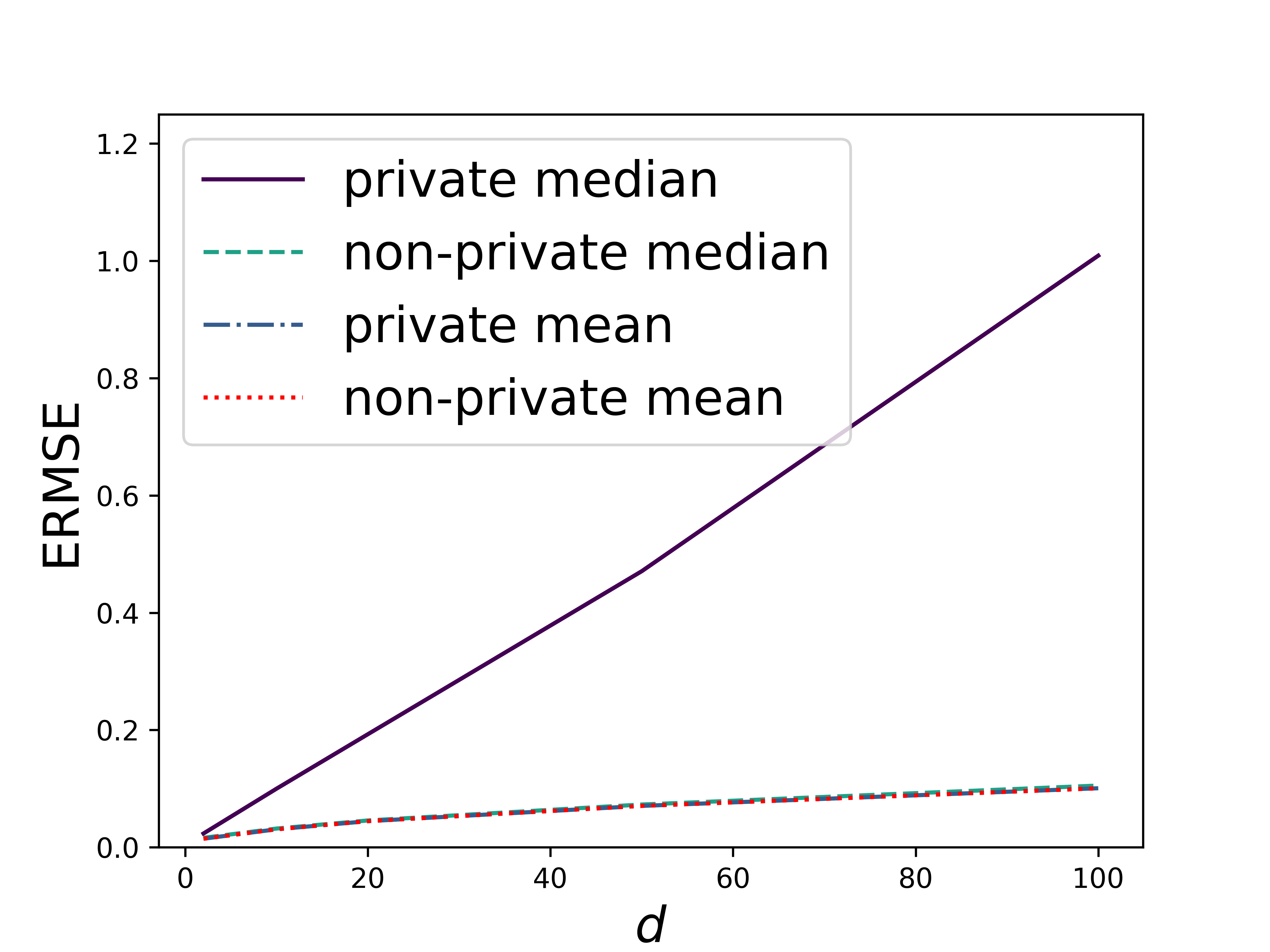}
\caption{Empirical root mean squared error (ERMSE) of the location estimators under Gaussian data. In the uncontaminated setting, the ERMSE of the private median grows at a slightly faster rate than its non-private counterpart and the mean estimators. This is due to the cost of privacy.}
    \label{fig:dimension2}
\end{figure}
\section{Auxiliary lemmas}
\begin{lemma}\label{lem::ub-gs-dp}
Suppose $U_n$ is an upper bound on $\GS_n(\phi)$.
Setting $\beta = \eps/2 U_n$ implies $\tilde{\theta}_n$ is $\eps$-differentially private. 
\end{lemma} 
\begin{proof}
From the definition of differential privacy, it is immediate that for any $0<\delta_1<\delta_2$, we have that $\delta_1$-differential privacy implies $\delta_2$-differential privacy. 
Now, it remains to show that $\tilde{\theta}_n$ is $\delta$-differentially private for some $\delta<\eps$.  
To see this, note that $\eps/2 U_n\leq \eps/2\GS_n(\phi)$, which implies that there exists $\delta<\eps$ such that $\eps/2 U_n= \delta/2\GS_n(\phi)$. 
This implies that $\tilde{\theta}_n$ is $\delta$-differentially private and the result follows.  
\end{proof}
\begin{lemma}\label{lem::GS}
If $\phi$ satisfies Condition \ref*{cond::phi-k-reg} then $\GS_n(\phi)\leq K/n$.
\end{lemma}
\begin{proof}
Definition \ref*{def::kf-reg} gives that 
\begin{multline*}
    \sup_{\substack{\sam{X}{n}\in \mathbf{D}_{n\times d},\\ \tilde{\mu}_n\in \widetilde{\cM}(\sam{X}{n})}}\sup_x|\phi(x,\hat{\mu}_n)-\phi(x,\tilde{\mu}_n)|\leq K\sup_{\substack{\sam{X}{n}\in \mathbf{D}_{n\times d},\\ \tilde{\mu}_n\in \widetilde{\cM}(\sam{X}{n})}}\sup_{g\in\sF}|\int g d(\hat{\mu}_n-\tilde{\mu}_n)|\\ \leq K\sup_{g\in\sF}\norm{g}/n=K/n. \qedhere
\end{multline*}
\end{proof}
\begin{lemma}\label{lem::incon}
Let $\D$ be the projection depth with location measure $\med$ and scale measure $\mad$, i.e., 
$$\D(x,\mu)=\frac{1}{1+O(x,\mu)}, \qquad \text{with}\qquad O(x,\mu)=\sup_{u\in \bS^{d-1}}|x^\top u-\med(\mu_u)|/\mad(\mu_u).$$
If $\epsilon=O(1)$ and $\pi$ is absolutely continuous, and $\pi\not\propto \ind{x\in E_{\D,\mu}}dx$, then $\GS_n(\D)=1$ and for a median drawn from the exponential mechanism based on $\D$ with $\beta=\epsilon/2\GS_n(\D)$, $\tilde{\theta}_n$, there exists $t>0$ such that
\begin{equation*}
    \limn\Prr{d(E_{\mu,\phi},\tilde{\theta}_n)\geq t}\neq 0.
\end{equation*}
\end{lemma}
\begin{proof}
The univariate sample median has infinite global sensitivity \citep{Brunel2020}, which implies that the projection depth with location measure $\med$ and scale measure $\mad$ has sensitivity 1.
It follows that $\beta=\eps/2$. 
We first show that there exists $t>0$ such that $Q_{\mu,\epsilon/2}(d(E_{\D,\mu},X)\geq t)>0$. 
Since $\pi$ is absolutely continuous and $\pi\not\propto \ind{x\in E_{\D,\mu}}dx$, there exists $A$ and $t>0$ such that $\inf_{x\in A}d(x,E_{\D,\mu})\geq t>0$ and $\pi(A)>0$. 
Since $ \D(x,\mu)\geq 0$, $Q_{\mu,\epsilon/2}(A)>0$, 
so $Q_{\mu,\epsilon/2}(d(E_{\mu,\phi},X)\geq t)>0$ as well. 

Next, we show that $ \limn \Prr{d(E_{\D,\mu},\tilde{\theta}_n)\geq t}=Q_{\mu,\epsilon/2}(d(E_{\D,\mu},X)\geq t)$. 
To this end, we have that \citet{zuo2003} gives that $\D(x,\hat{\mu}_n)\rightarrow\D(x,\mu)\ \mu-a.s.$ as $n\rightarrow\infty$. 
Thus, $dQ_{\hat{\mu}_n,\epsilon/2}/d\pi\rightarrow dQ_{\mu,\epsilon/2}/d\pi\ \mu-a.s.$ as $n\rightarrow\infty$. 
This implies that, $\mu-a.s.$, $Q_{\hat{\mu}_n,\epsilon/2}\rightarrow Q_{\mu,\epsilon/2}$ in total variation  as $n\rightarrow\infty$ . 
We have that this implies
\begin{align*}
    \limn \Prr{d(E_{\mu,\phi},\tilde{\theta}_n)\geq t}&=\limn\E{}(Q_{\hat{\mu}_n,\epsilon/2}(d(E_{\mu,\phi},X)\geq t))\\
    &=\E{}(\limn Q_{\hat{\mu}_n,\epsilon/2}(d(E_{\mu,\phi},X)\geq t))\\
    &=Q_{\mu,\epsilon/2}(d(E_{\mu,\phi},X)\geq t))
    \neq 0,
\end{align*}
which completes the proof.
\end{proof}
\begin{lemma}\label{lem::gs_o1}
Suppose that $\D$ is non-negative and satisfies properties (2) and (4) in Definition \ref*{def::depth}. 
Then there exists a universal constant $C>0$ such that  $\GS_n(\widehat{\mathbf{D}}(\hat{\mu}_n))\geq C$. 
Furthermore, if $\D$ satisfies Condition \ref*{cond::phi-k-reg}, then $\GS_n(\widehat{\mathbf{D}}(\hat{\mu}_n))=O(1)$. 
\end{lemma}
\begin{proof}
Consider $\tilde{\mu}_n$ to be centrally symmetric about $X_1$. 
Property (2) and non-negativity imply that $\D(X_1,\tilde{\mu}_n)=C>0$. 
Let $\tilde{\mu}_n(Y_1)$ be the empirical measure corresponding to $Y_1,\ldots, X_n$ with $Y_1\neq X_1$. 
Given that $\tilde{\mu}_n(Y_1)\in \widetilde{\cM}(\sam{X}{n})$, property (4) implies that
\begin{align*}
    \GS_n(\widehat{\mathbf{D}}(\hat{\mu}_n))&\geq \sup_{Y_1\in\rdd}| \D(Y_1,\tilde{\mu}_n(Y_1))- \D(X_1,\hat{\mu}_n)|\geq \D(X_1,\hat{\mu}_n)-\inf_{Y_1\in\rdd}\D(Y_1,\tilde{\mu}_n(Y_1))\geq C.
\end{align*}
Now, if $\D$ satisfies Condition \ref*{cond::phi-k-reg}, then Lemma \ref*{lem::GS} implies that 
\begin{align*}
    \GS_n(\widehat{\mathbf{D}}(\hat{\mu}_n))^2&\leq \frac{K^2}{n}+\sup_{\hat{\mu}_n,y}| \D(y,\hat{\mu}_n)- \D(X_1,\hat{\mu}_n)|^2\leq \frac{K^2}{n}+\norm{\D}=O(1). \qedhere
\end{align*}
\end{proof}
\begin{lemma}\label{lem::alpha_mg}
If $\mu=\cN(\theta_0,\Sigma)$ then for all $t>0$ and positive definite $\Sigma$, $\sup_{\norm{x}=t}\HD(x,\mu)= \Phi(-t/\sqrt{\lambda_1})$.
\end{lemma}
\begin{proof}
From the fact that $\HD$ is affine invariant, without loss of generality we can set $\theta_0=0$. 
From the fact that $\Phi$ is increasing, it holds that
\begin{multline*}
    \sup_{\norm{x}=t}\HD(x,\mu)= \sup_{\norm{x}=t}\inf_{\norm{u}=1}\Phi(\frac{x^\top u}{\sqrt{u'\Sigma u}})\\
    =1-\inf_{\norm{v}=1}\sup_{\norm{u}=1}\Phi(\frac{tv^\top u}{\sqrt{u'\Sigma u}})=1-\Phi(t\inf_{\norm{v}=1}\sup_{\norm{u}=1}\frac{v^\top u}{\sqrt{u'\Sigma u}}).
\end{multline*}
It suffices to compute $$\inf_{\norm{v}=1}\sup_{\norm{u}=1}\frac{(v^\top u)^2}{u'\Sigma u}.$$
First, note that 
\begin{align*}
    \inf_{\norm{v}=1}\sup_{\norm{u}=1}\frac{(v^\top u)^2}{u'\Sigma u}\geq \inf_{\norm{v}=1}\frac{(v^\top v)^2}{v'\Sigma v}=1/\lambda_1.
\end{align*}
Showing the reverse inequality, setting $v=e_1$ and setting $z=\Sigma^{1/2}u$ gives that 
{\small
\begin{align*}
\inf_{\norm{v}=1}\sup_{\norm{u}=1}\frac{(v^\top u)^2}{u'\Sigma u}
     &\leq  \sup_{\norm{u}=1}\frac{(e_1^\top u)^2}{u^\top \Sigma u}= \sup_{\norm{\Sigma^{-1/2}z}=1}\frac{(e_1^\top \Sigma^{-1/2}z)^2}{z^\top z}= \sup_{\norm{\Sigma^{-1/2}z}=1}(e_1^\top z/\norm{z})^2/\lambda_1 \leq\frac{1}{\lambda_1}.
\end{align*}}
Thus,
\begin{align*}
    \sup_{\norm{x}=t}\HD(x,\mu)&= 1-\Phi(t/\sqrt{\lambda_1})\geq \Phi(-t/\sqrt{\lambda_1}).
\end{align*}
\end{proof}
\section{Proofs of the results from Section~\ref*{sec::depth}}\label{sec::depth_proofs}
We now prove the various results presented in Section~\ref*{sec::depth}. 
\begin{proof}[Proof of Proposition \ref*{prop::depth_prop}]
With the exception of the modified spatial depth, these results were shown by \citep{Zuo2000,Liu1988,Serfling2002,Cuevas2009, RAMSAY201951}, respectively. 
For the modified spatial depth, properties (1), (2) and (4) follow from the fact that properties (1), (2) and (4) for spatial depth. 
\end{proof}
\begin{proof}[Proof of Proposition \ref*{prop::idd_beta}]
We prove properties (1)-(4) in order. 
Let $A\in \re^{d\times d}$ be any orthogonal matrix and $b\in \rdd$. 
For any integrable function $h\colon\bS^{d-1}\rightarrow \re^+$ , it holds that
\begin{equation}\label{eqn::int_sphere}
    \int_{\bS^{d-1}} h(u) d\nu=\int_{\bS^{d-1}}  h(Au)d\nu.
\end{equation}
It also holds that
$$(Ax+b-AX-b)^\top u=A^\top(x-X)^\top u=A^\top u^\top(x-X).$$
Let $Y\sim A\mu+b$. 
Applying the above identity implies that
$$\E{}{\expt\left(s(Ax+b-Y)^\top u\right)}=\E{}{\expt\left(sA^{\top} u(x-X)\right)}.$$
The result follows from the definition of the smoothed integrated dual depth and \eqref{eqn::int_sphere}.

For property (2), by definition $\sup_x\IDD(x,\mu,s)\leq 1/4$. 
It remains to show $\IDD(\theta_0,\mu,s)= 1/4$. 
We will use the fact that 
\begin{equation}\label{eqn::prop2}
    (1-\expt(x-X))=\expt(X-x).
\end{equation}
Now, consider the integrand in the definition of smoothed integrated dual depth at any fixed $u$. 
Central symmetry of $\mu$ and \eqref{eqn::prop2} yield that
\begin{align*}
\E{}{\expt\left(s(\theta_0-X)^\top u\right)}\left(1-\E{}{\expt\left(s(\theta_0-X)^\top u\right)}\right)&=\left(\E{}{\expt\left(s(X-\theta_0)^\top u\right)}\right)^2.
\end{align*}
Central symmetry of $\mu$ and \eqref{eqn::prop2} also yield that
\begin{align*}
\E{}{\expt\left(s(\theta_0-X)^\top u\right)}\left(1-\E{}{\expt\left(s(\theta_0-X)^\top u\right)}\right)&=\left(1-\E{}{\expt\left(s(X-\theta_0)^\top u\right)}\right)^2.
\end{align*}
These two equalities yield that 
\begin{align*}
\qquad \E{}{\expt\left(s(X-\theta_0)^\top u\right)}=1/2.
\end{align*}
Given that this holds for any $u\in\bS^{d-1}$, it follows that $\IDD (\theta_0;\mu,s)= 1/4=\sup_x\IDD (x;\mu,s)$. 

For property (3), at any fixed $u$, central symmetry of $\mu$ and the fact that $\expt$ is increasing yields that either
\begin{align*}
\E{}{\expt\left(s(x-X)^\top u\right)}\leq \E{}{\expt\left(s(p\theta_0+(1-p)x-X)^\top u\right)}\leq 1/2,
\end{align*}
holds or 
\begin{align*}
1/2\leq \E{}{\expt\left(s(p\theta_0+(1-p)x-X)^\top u\right)}\leq \E{}{\expt\left(s(x-X)^\top u\right)},
\end{align*}
holds. 
The result follows immediately from the fact that $f(x)=x(1-x)$ is monotone on both $[0,1/2]$ and $[1/2,1]$. 

Lastly, we show property (4). For any fixed $u$, the fact that $\lim_{x\rightarrow\infty}\expt(x)\expt(-x)=0$ and the bounded convergence theorem imply that
\begin{align*}
\lim_{c\rightarrow\infty}  \IDD (cu;\mu,s)&=\lim_{c\rightarrow\infty}\int_{\bS^{d-1}}\left(\E{}{\expt\left(s(cu-X)^\top u\right)}\right)\left(1-\E{}{\expt\left(s(cu-X)^\top u\right)}\right) d\nu(u)\\	
&=\int_{\bS^{d-1}}\lim_{c\rightarrow\infty}\left(\E{}{\expt\left(s(cu-X)^\top u\right)}\right)\left(1-\E{}{\expt\left(s(cu-X)^\top u\right)}\right) d\nu(u)\\	
&=0. \qedhere
\end{align*} 
\end{proof}
\begin{proof}[Proof of Proposition \ref*{prop::app_dep}]
Let 
$$Z=\sup_x\left|\widehat{\IDD} (x,\hat{\mu}_n,s)-\IDD(x,\hat{\mu}_n,s)\right|.$$
We first prove a concentration result for $Z$. 
To this end, let
\begin{align*}
G_{n}(x,u,s)&=\frac{1}{n}\sum_{i=1}^n\expt\left(s(x-X_i)^\top u\right)\\
W_1&= \sup_x\left|\frac{1}{M}\sum_{m=1}^M G_{n}(x,u_m,s)-\int_{\bS^{d-1}} G_{n}(x,u,s)d\nu(u)\right|\\
W_2&=\sup_x\left|\frac{1}{M}\sum_{m=1}^M G_{n}(x,u_m,s)^2-\int_{\bS^{d-1}} G_{n}(x,u,s)^2d\nu(u)\right|.
\end{align*}
From the fact that $Z\leq W_1+W_2$, we need only prove concentration results for $W_1$ and $W_2$. 
Let 
\begin{align*}
    \sG&=\left\{ \expt(y^\top u+b)/n\colon y\in\rdd,\ b\in \re\right\}. 
\end{align*}
Note that $\VC(\sG)\leq d+2$. 
Therefore, it follows from Theorem 7.12 in \citep{sen2018gentle} that there exists a universal constant $K>0$ such that for all $\epsilon\in(0,1)$, it holds that
$$ \sup_{Q}N(\epsilon/n,\sG,L_2(Q))\leq  \left(\frac{K}{\epsilon}\right)^{2d+4}.$$
Now, define $$\sumn\sG_i=\left\{ \frac{1}{n}\sumn\expt(y_i^\top u+b_i)\colon y_1,\ldots,y_n\in\rdd,\ ,b_1,\ldots,b_n\in \re\right\}.$$
It follows from Lemma 7.21 in \citep{sen2018gentle} that for all $\epsilon\in (0,1)$, it holds that
\begin{align*}
   \sup_{Q} N\left(2\epsilon,\sumn\sG_i,L_2(Q)\right) \leq \left(\frac{K}{\epsilon}\right)^{2dn+4n}. 
\end{align*}
It follows from \citep{Talagrand1994,sen2018gentle}, that there exists a universal constant $K'>0$ such that for all $t>0$, it holds that
\begin{align*}
    \Pr\left( \sqrt{M}W_1>t\right)&\leq \left(\frac{K't}{\sqrt{2dn+4n}}\right)^{2dn+4n}e^{-2t^2}.
\end{align*}
However, note that $|W_1|\leq 1/4$, therefore, for $t>\sqrt{M}/4$, it holds that $\Prr{\sqrt{M}W_1>t}=0$. 
Therefore, it holds that $$\left(\frac{K't}{\sqrt{2dn+4n}}\right)^{2dn+4n}\leq \left(\frac{K'\sqrt{M}}{4\sqrt{2dn+4n}}\right)^{2dn+4n},$$
with probability 1, and so,
\begin{align*}
    \Pr\left( W_1>t\right)&\leq \left(\frac{K'M}{4dn}\right)^{dn}e^{-2Mt^2}.
\end{align*}
The same logic applied to $W_2$ gives that there exists a universal constant $K''>0$ such that for all $t>0$, it holds that
\begin{align*}
    \Pr\left( W_2>t\right)&\leq \left(\frac{K''M}{dn}\right)^{dn}e^{-2Mt^2}.
\end{align*}
Therefore, there exists universal constants $K,K'>0$ such that for all $t>0$, it holds that
\begin{align*}
 \Prr{Z>t}&\leq \left(\frac{KM}{dn}\right)^{dn}e^{-K'Mt^2}. 
\end{align*}
Now, to prove the sample complexity bound, we want to find $M$ such that 
$$\left(\frac{KM}{dn}\right)^{dn}e^{-K'Mt^2}\leq \gamma,$$
which is equivalent to the inequality
\begin{equation}\label{eqn::M_step1}
    -dn\log (M/nd)+K'Mt^2\geq \log(1/\gamma)+dn\log\left(K\right).
\end{equation}
First, we show that $dn\log (M/nd)\leq K'Mt^2/2.$
Note that the function $h(M)=K'Mt^2/2-dn\log (M/nd)$ satisfies the conditions of Lemma \ref*{lem::deriv}. 
Applying Lemma \ref*{lem::deriv} yields that $h(M)\geq 0$ for 
\begin{equation}\label{eqn::M-1}
    M\geq 2\frac{2dn}{K't^2}\log\left(\frac{2}{K't^2}\vee e\right)\gtrsim \frac{dn}{t^2}\log\left(\frac{1}{t}\vee e\right).
\end{equation}
When \eqref{eqn::M-1} is satisfied, we have that \eqref{eqn::M_step1} is satisfied for 
\begin{equation}\label{eqn::M_step2}
   M\geq 2\frac{\log(1/\gamma)+dn\log K}{ K't^2}.
\end{equation}
Thus, for all $t>0$ and all $0<\gamma<1$, there exists a universal constant $c_1>0$ such that $$\sup_x\left|\widehat{\IDD} (x,\hat{\mu}_n,s)-\IDD(x,\hat{\mu}_n,s)\right|\leq t,$$
with probability $1-\gamma$, provided that 
\begin{equation*}
   M\geq c_1\frac{\log(1/\gamma)\vee dn\log\left( \frac{1}{t}\vee e\right)}{ t^2}.
\end{equation*}
\end{proof}
We prove Theorem~\ref*{thm::depth_cond} with a series of Lemmas. 
\begin{lemma}\label{lem::c1_of_depth}
Condition \ref*{cond::phi_um} holds for each of the depth functions defined in Section~\ref*{sec::depth}. 
\end{lemma}
\begin{proof}
Recall that all of the depth functions satisfy property (4) given in Definition \ref*{def::depth}. 
Property (4) implies that there exists a compact set $E$ such that $\sup_{x\in\rdd}\D(x,\mu)=\sup_{x\in E}\D(x,\mu)$. 
It follows from boundedness of  $\D(x,\mu)$ and compactness of $E$ that there exists a point in $y\in E$ such that $\D(y,\mu)=\sup_{x\in E}\D(x,\mu)=\sup_{x\in\rdd}\D(x,\mu)$. 
\end{proof}
\begin{lemma}\label{lem::lip_continuity_of_depth_1}
Suppose that $\pi$ is absolutely continuous and that for all $u\in \bS^{d-1}$, the measure $\mu_u$ is absolutely continuous with density $f_u$ and $\sup_{\norm{u}=1}\norm{f_u}_\infty=L  <\infty$. 
Then there exists a universal constant $C>0$ such that for all $s\in (0,\infty]$, halfspace, simplicial, integrated rank-weighted and smoothed integrated dual depth satisfy Condition \ref*{cond::phi_uc} with $\omega(r)=C\cdot L\cdot r$ . 
\end{lemma}
\begin{proof}
We prove the result for each depth in turn.
\hfill\newline
\hfill\newline
\textbf{Halfspace depth:} 
Consider two points $x,y\in \rdd$. 
By assumption, $F(\cdot,u,\mu)$ is $L$-Lipschitz function a.e. .
It follows that 
\begin{align*}
|\HD(x,\mu)-\HD(y;\mu)|&=|\inf_u F(x,u,\mu)- \inf_u F(y,u,\mu)| \\
&\leq  \sup_u | F(x,u,\mu)-F(y,u,\mu)|\\
&\leq  L\cdot \norm{x-y}\ a.e.\ .
\end{align*}
\hfill\newline
\textbf{Integrated rank-weighted depth:} For integrated rank-weighted depth, note that absolute continuity of $\mu_u$ implies that $F(x,u,\mu)=F(x-,u,\mu)$, where one recalls that $F(x-,u,\mu)=\mu(\{X^\top u< x^\top u\})$. 
With this in mind, for any two points $x,y\in \rdd$ the reverse triangle inequality yields that
\begin{align*}
|\IRW(x,\mu)-\IRW(y,\mu)|&\leq \int_{\bS^{d-1}} 2|F(x,u,\mu)-F(y,u,\mu)|d\nu(u)\leq  2L\cdot  \norm{x-y}\ a.e.\ .
\end{align*}
\hfill\newline
\textbf{Integrated dual depth:} For any $h:S\rightarrow [0,1]$, $h':S\rightarrow [0,1]$, for any $x\in S$ it holds that 
\begin{align}\label{eqn::3bd}
    |h(x)(1-h(x))-h'(x)(1-h'(x))|&\leq 3|h(x)-h'(x)|. 
\end{align}
Using this inequality, 
\begin{align}\label{eqn::iddlip1}
|\IDD(x,\mu,s)-\IDD(y,\mu,s)|&\leq 3\sup_u\left| \E{\mu}{\expt(s(x-X)^\top u)}-\E{\mu}{\expt(s(y-X)^\top u)}\right|.
\end{align}
It remains to bound $\left| \E{\mu}{\expt(s(x-X)^\top u)}-\E{\mu}{\expt(s(y-X)^\top u)}\right|$. 
To this end, first assume that $s\leq \infty$ and let $w,z\in \re$. 
Using the fact that $\sigma$ is a $1/4$-Lipschitz function, it holds that 
\begin{align*}
\left| \E{\mu_u}{\expt(s(w-X))}-\E{\mu_u}{\expt(s(z-X))}\right|&= \left|\int \expt(s(w-t))-\expt(s(z-t))f_u(t) dt\right|\\
&= \left|\frac{1}{s}\int \expt(v)-\expt(sz-sw+v)f_u(-v/s-w) dv\right|\\
&\leq|z-w|/4\int  \left|f_u(-v/s-w)\right| dv\\
&\leq |z-w|\norm{f_u}/4,
\end{align*}
where in the second line we use the substitution $v=s(w-t)$. 
The preceding inequality and \eqref{eqn::iddlip1} imply that
\begin{align*}
    |\IDD(x,\mu,s)-\IDD(y,\mu,s)|&\leq 3\sup_u\left| \E{\mu}{\expt(s(x-X)^\top u)}-\E{\mu}{\expt(s(y-X)^\top u)}\right|\\
    &\leq \frac{3}{4}L\norm{x-y}.
\end{align*}
Now assume that $s=\infty$. 
It follows from absolute continuity of $\mu_u$ and \eqref{eqn::iddlip1} that
\begin{align*}
|\IDD(x,\mu)-\IDD(y,\mu)|\leq 3\sup_u\left|F(x,u,\mu)-F(y,u,\mu)\right|\leq 3L\norm{x-y}\ a.e.
\end{align*}
\hfill\newline
\textbf{Simplicial depth:} For simplicial depth, we must show that $\Pr(x\in \Delta(X_1,\ldots,X_{d+1})$ is Lipschitz continuous. 
It is easy to begin with two dimensions. 
Consider $\Pr(x\in \Delta(X_1,X_2,X_{3}))-\Pr(y\in \Delta(X_1,X_2,X_{3}))$, as per \citep{liu1990}, we need to show that $\Pr(\overline{X_1X_2}\text{ intersects }\overline{xy})\leq L\cdot  \norm{x-y}$. 
In order for this event to occur, we must have that $X_1$ is above $\overline{xy}$ and $X_2$ is below $\overline{xy}$, but both are projected onto the line segment $\overline{xy}$ when projected onto the line running through $\overline{xy}$. 
Affine invariance of simplicial depth implies we can assume, without loss of generality, that $x$ and $y$ lie on the axis of the first coordinate. 
Let $x_1$ and $y_1$ be the first coordinates of $x$ and $y$. Suppose that $X_{11}$ is the first coordinate of $X_1$. 
It then follows from a.e. Lipschitz continuity of $F(x,u,\mu)$ that 
$$\Pr(\overline{X_1X_2}\text{ intersects }\overline{xy})\leq \Pr(x_1<X_{11}<y_1)\leq L\cdot  |x_1-y_1|\leq L\norm{x-y}\ a.e.\ .$$
In dimensions greater than two, a similar line of reasoning can be used. 
We can again assume, without loss of generality, that $x$ and $y$ lie on the axis of the first coordinate. 
It holds that $$\Pr(x\in \Delta(X_1,X_2,X_{3}))-\Pr(y\in \Delta(X_1,X_2,X_{3}))\leq \binom{d+1}{d}\Pr(A_d),$$ 
where $A_d$ is the event that the $d-1$-dimensional face of the random simplex, formed by $d$ points randomly drawn from $F$, intersects the line segment $\overline{xy}$. 
It is easy to see that 
\begin{align*}
    \Pr(A_d)\leq \Pr(x_1<X_{11}<y_1)\leq \norm{f_u}_\infty |x_1-y_1|\leq L\cdot  \norm{x-y}\ a.e.\ .
\end{align*}
 This completes the proof.
\end{proof}
\begin{lemma}\label{lem::lip_continuity_of_depth_2}
Suppose that $\mu$ has a bounded density and that $\sup_y\E{}{\norm{y-X}^{-1}}=L'\leq \infty$. Then the modified spatial depth satisfies Condition \ref*{cond::phi_uc} with $\omega(r)=2\sqrt{d}L'\cdot r$ and spatial depth satisfies Condition \ref*{cond::phi_uc} with $\omega(r)=2 L'\cdot r$. 
\end{lemma}
\begin{proof}
Taking the derivative of $\MSD$ and using the assumption of a bounded density results in 
\begin{align*}
   \frac{d\MSD(y,\mu)}{dy}=- 2\Ee{\mu}{\sff(y-X)}\E{\mu}{\left[\frac{1_{d}}{\norm{y-X}}-\frac{(y-X)*(y-X)^\top 1_{d}}{\norm{y-X}^3}\right]}.
\end{align*}
Now,
\begin{align*}
    || \frac{d\MSD(y,\mu)}{dy}||\leq 2\sqrt{d}\sup_y\E{}{\norm{y-X}^{-1}}=2\sqrt{d}L'.
\end{align*}
Thus, the map $y\mapsto \MSD(y,\mu)$ is $2\sqrt{d}L'$-Lipschitz.

Define the spatial rank function $h(y,\mu)=\Ee{\mu}{\sff(y-X)}.$
Proposition~4.1 of \citep{Koltchinskii1997} says that if $\mu$ has a bounded density, then the map $y\mapsto h(y,\mu)$ is continuously differentiable in $\rdd$ with derivative 
\begin{align*}
    \frac{dh}{dy}=\int_{\{x \neq y\}} \frac{1}{\norm{y-x}}\left[I_d-\frac{(y-x)(y-x)^\top}{\norm{y-x}^2}\right] d\mu(x).
\end{align*}
Now,
\begin{align*}
    ||\frac{dh}{dy}||\leq 2\sup_y\E{}{\norm{y-X}^{-1}}=2L',
\end{align*}
where $\norm{\cdot}$ denotes the operator norm. 
Thus, the map $y\mapsto h(y,\mu)$ is $2L'$-Lipschitz, which implies that the map $y\mapsto \SD(y,\mu)$ is also $2L'$-Lipschitz.
\end{proof}
\begin{lemma}\label{lem::depth_kf}
Condition \ref*{cond::phi-k-reg} holds for each of the depth functions defined in Section~\ref*{sec::depth}. 
\end{lemma}
\begin{proof}
We prove the result for each depth in turn.
\hfill\newline
\hfill\newline
\textbf{Halfspace depth:} Let 
$$\sF=\left\{\ind{X^\top u \leq y}\colon\ u\in \bS^{d-1},\ y\in \re\right\}$$
and recall that the set of subgraphs of $\sF$ is the set of closed halfspaces in $\rdd$. 
Furthermore, it holds that
\begin{align*}
   \sup_x|\HD(x,\mu_1)-\HD(x,\mu_2)|&\leq \sup_{g\in \sF}|\E{\mu_1}{g(X)}-\E{\mu_2}{g(X)}|.
\end{align*}
Thus, it follows $\HD$ is $(1,\sF)$-regular with $\VC(\sF)=d+2$. 
\hfill\newline
\hfill\newline
\textbf{Integrated rank-weighted-depth:} First observe that 
\begin{multline*}
  \sup_{x}  |\IRW(x,\mu_1)-\IRW(x,\mu_2)|\leq  4\sup_{u,x}   [\left|F(x,u,\mu_1 )-F(x,u,\mu_2)\right|\\
  \vee\left|F(x-,u,\mu_1 )-F(x-,u,\mu_2)\right|].
\end{multline*}
Define 
\begin{align*}
    \sG&=\left\{g\colon g(X)=\ind{X^\top u <y},\ u\in \bS^{d-1},\ y\in \re\right\}.
\end{align*}
We see that the integrated rank-weighted depth function is $(4,\sG\vee \sF)$-regular. Furthermore, since the subgraphs of $\sG$ are contained in those of $\sF$, we have that $\VC(\sG\vee \sF)=d+2$.
\hfill\newline
\hfill\newline
\textbf{Integrated dual depth:} 
If $s=\infty$ then it follows immediately from \eqref{eqn::3bd} and the analysis of halfspace depth that the integrated dual depth is $(3,\sF)$-regular with $\VC(\sF)=d+2$. 
Suppose that $s<\infty$. 
It follows from \eqref{eqn::3bd} that 
\begin{align*}
    \sup_{x}|\IDD(x,\mu_{2})-\IDD(x,\mu_{1})|\leq 3\sup_{y\in \re,u\in \bS^{d-1}}|\E{\mu_1}{\expt(s(x-X)^\top u)}-\E{\mu_2}{\expt(s(x-X)^\top u)}|.
\end{align*}
We have that 
\begin{align*}
  \sup_{y\in \re,u\in \bS^{d-1}}|\E{\mu_1}{\expt(s(x-X)^\top u)}-\E{\mu_2}{\expt(s(x-X)^\top u)}|\leq   \sup_{g\in \sF'}|\E{\mu_1}{g(X)}-\E{\mu_2}{g(X)}|,
\end{align*}
where
\begin{align*}
    \sF'&= \left\{\expt(AX_i+b)\colon A\in\rdd,b\in \re\right\}.
\end{align*}
The class of functions $\sF'$ is a constructed from a monotone function applied to a finite-dimensional vector space of measurable functions. 
It follows from Lemma 7.15 and Lemma 7.19 of \citet{sen2018gentle} that $\VC(\sF')=d+2$. 
Therefore, the smoothed integrated dual depth is $(3,\sF')$-regular with $\VC(\sF')=d+2$. 
\hfill\newline
\hfill\newline
\textbf{Simplicial Depth:} \citet{DUMBGEN1992119} gives that 
$$\sup_x|\SMD(x,\mu_1)-\SMD(x,\mu_2)|\leq (d+1)\sup_{A\in\scA}|\mu_1(A)-\mu_2(A)|,$$
where $\mathscr{A}$ is the set of intersections of $d$ open half-spaces in $\rdd$. 
Define the set of Boolean functions $\scA'=\{g(x)=\ind{x\in A}\colon A\in\scA\}$. 
Thus, simplicial depth is $(d+1,\scA')$-regular with $\VC(\scA')=O(d^2\log d)$.
\hfill\newline
\hfill\newline
\textbf{Spatial Depth:} 
Applying the reverse triangle inequality yields
\begin{align*}
    \sup_x\left|\ \norm{\E{\mu_1}{\frac{x-X}{\norm{x-X}}}}-\norm{\E{\mu_2}{\frac{x-X}{\norm{x-X}}}}\ \right|&\leq   \sup_x\norm{\E{\mu_1}{\frac{x-X}{\norm{x-X}}}-\E{\mu_2}{\frac{x-X}{\norm{x-X}}}}\\
     &\leq  \sup_{x}\sum_{j=1}^d\left|\E{\mu_1}{\frac{x_j-X_j}{\norm{x-X}}}-\E{\mu_2}{\frac{x_j-X_j}{\norm{x-X}}}\right|\\
     &\leq d\sup_{x,1\leq j\leq d}\left|\E{\mu_1}{\frac{x_j-X_j}{\norm{x-X}}}-\E{\mu_2}{\frac{x_j-X_j}{\norm{x-X}}}\right|.
\end{align*}
It remains to compute the VC-dimension of 
$$\sG=\left\{g(X)=\frac{x_j-X_j}{\norm{x-X}}\colon x\in\rdd,\ j\in[d]\right\}.$$
Now, let $E$ be the set of standard basis vectors for $\rdd$. 
We can then write 
$$\sG=\left\{g(X)=\frac{e^\top(x-X)}{\norm{x-X}}\colon x\in\rdd,\ e\in E\right\}.$$
It is now easy to see that 
$$\sG\subset\left\{g(X)=y^{\top}(x-X)\colon x\in\rdd,\ y\in\re^d\ j\in[d]\right\}.$$
It follows that $\VC(\sG)\leq O(d)$ and that spatial depth is $(d,\sG)$ regular. 
\hfill\newline
\hfill\newline
\textbf{Modified Spatial Depth:} 
We may write
{\small
\begin{align*}
    \sup_x|\SD(X,\mu_1)-\SD(X,\mu_2)|&= \sup_x\left|\ \norm{\Ee{\mu_1}{\frac{x-X}{\norm{x-X}}}}^2-\norm{\Ee{\mu_2}{\frac{x-X}{\norm{x-X}}}}^2\ \right|\\
    &=\sup_x\left|\ \sup_u\left[\Ee{\mu_1}{\frac{x-X}{\norm{x-X}}}^\top u\right]-\sup_u\left[\Ee{\mu_2}{\frac{x-X}{\norm{x-X}}}^\top u\right]\ \right|\\
    &\leq  \sup_u\sup_x\left|\ \Ee{\mu_1}{\frac{x-X}{\norm{x-X}}}^\top u-\Ee{\mu_2}{\frac{x-X}{\norm{x-X}}}^\top u\ \right|\\
    &\leq  \sup_u\sup_x\left|\ \Ee{\mu_1}{\frac{x-X}{\norm{x-X}}^\top u}-\Ee{\mu_2}{\frac{x-X}{\norm{x-X}}^\top u}\ \right|.
\end{align*}}
It remains to compute the VC-dimension of 
$$\sG=\left\{g(X)=\frac{u^\top(x-X)}{\norm{x-X}}\colon x\in\rdd,\ u\in \bS^{d-1}\right\}.$$
It is now easy to see that 
$$\sG\subset\left\{g(X)=y^{\top}X+b\colon b\in\re,\ y\in\re^d\ j\in[d]\right\}.$$
It follows that $\VC(\sG)\leq d+2$ and that the modified spatial depth is $(1,\sG)$ regular. 
\end{proof}
\begin{proof}[Proof of Theorem~\ref*{thm::depth_cond}]
The result follows from Lemmas \ref*{lem::c1_of_depth}-\ref*{lem::depth_kf}. 
\end{proof}
\begin{proof}[Proof of Corollary~\ref*{thm::gshs}]
Together, Lemma \ref*{lem::GS} and the definition of the Laplace mechanism imply that $\widetilde{\D}(x,\hat{\mu}_n)$ is $\epsilon$-differentially private. 
Furthermore, the properties of the Laplace measure give that 
\begin{align*}
    \Pr\left(\left|\widetilde{\D}(x,\hat{\mu}_n)-\D(x,\mu)\right|>t\right)&\leq \Pr\left(\left|\D(x,\hat{\mu}_n)-\D(x,\mu)\right|>t/2\right)+\Prr{|W_1\frac{K}{n\epsilon}|>t/2}\\
    &\leq \Pr\left(\left|\D(x,\hat{\mu}_n)-\D(x,\mu)\right|>t/2\right)+e^{-n\epsilon t/2K}.
\end{align*}
Applying \eqrefplain{eqn::ob_con} yields
\begin{align*}
    \Pr\left(\left|\widetilde{\D}(x,\hat{\mu}_n)-\D(x,\mu)\right|>t\right)&\leq e^{\VC(\sF)\log\left(c_1\frac{n}{\VC(\sF)}\right)-c_2n t^2}+e^{-n\epsilon t/2K}.
\end{align*}
\end{proof}

\end{document}